\newtheorem{theorem}{Theorem}[section]
\newtheorem{corollary}[theorem]{Corollary}
\newtheorem{lemma}[theorem]{Lemma}
\newtheorem{proposition}[theorem]{Proposition}
\theoremstyle{definition}
\theoremstyle{remark}
\definecolor{mr}{rgb}{0.1,0.2,0.7}
\newcommand{\eps}{\varepsilon}
\newcommand{\calA}{\mathcal{A}}
\newcommand{\calB}{\mathcal{B}}
\newcommand{\calF}{\mathcal{F}}
\newcommand{\calL}{\mathcal{L}}
\newcommand{\s}{\sigma}
\newcommand{\R}{\mathds{R}}
\newcommand{\N}{{\mathds{N}}}
\newcommand{\Rd}{{\R^d}}
\newcommand{\RR}{\mathrm{I\kern-0.20emR}}
\newcommand{\D}{\mathrm{d}\kern0.2pt}
\newcommand{\E}{\mathbb{E}}
\newcommand{\p}{\mathbb{P}}
\title[Transition densities for SDEs driven by cylindrical stable processes]{Transition density estimates for diagonal systems of SDEs driven by cylindrical $\alpha$-stable processes}
\author[T. Kulczycki]{Tadeusz Kulczycki}
\author[M. Ryznar]{Micha{\l} Ryznar}
\thanks{T. Kulczycki was supported in part by the National Science Centre, Poland, grant no. 2015/17/B/ST1/01233, M. Ryznar was supported in part by the National Science Centre, Poland, grant no. 2015/17/B/ST1/01043}
\address{Faculty of Pure and Applied Mathematics, Wroc{\l}aw University of Science and Technology, Wyb. Wyspia{\'n}skiego 27, 50-370 Wroc{\l}aw, Poland.}
\email{Tadeusz.Kulczycki@pwr.edu.pl}
\email{Michal.Ryznar@pwr.edu.pl}
\begin{document}
\begin{abstract} We consider the system of stochastic differential equation $dX_t = A(X_{t-}) \, dZ_t$, \\$ X_0 = x$,
driven by cylindrical $\alpha$-stable process $Z_t$ in $\R^d$. We assume that $A(x) = (a_{ij}(x))$ is diagonal and $a_{ii}(x)$ are bounded away from zero, from infinity and H{\"o}lder continuous. We construct transition density $p^A(t,x,y)$ of the process $X_t$ and show sharp two-sided estimates of this density. We also prove H{\"o}lder and gradient estimates of $x \to p^A(t,x,y)$. Our approach is based on the method developed by Chen and Zhang in \cite{CZ}.
\end{abstract}

\maketitle

\section{Introduction}
Let 
$$
Z_t = (Z_t^{(1)},\ldots,Z_t^{(d)}),
$$
be cylindrical $\alpha$-stable process, that is $Z_t^{(i)}$, $i = 1,\ldots,d$ are independent one-dimensional symmetric  standard  $\alpha$-stable processes of index $\alpha \in (0,2)$, $d \in \N$, $d \ge 2$. We consider the system of stochastic differential equation 
\begin{equation}
\label{main}
dX_t = A(X_{t-}) \, dZ_t, \quad X_0 = x, 
\end{equation}
where $A(x) = (a_{ij}(x))$ is diagonal and there are constants $b_1, b_2, b_3 > 0$, $\beta \in (0,1]$ such that for any $x, y \in \R^d$, $i \in \{1,\ldots,d\}$
\begin{equation}
\label{bounded}
b_1 \le a_{ii}(x) \le b_2,
\end{equation}
\begin{equation}
\label{aHolder}
|a_{ii}(x) - a_{ii}(y)| \le b_3 |x - y|^{\beta}.
\end{equation}
In the sequel, without loss of generality, we assume that $\beta \in (0,\alpha/4]$.

It is well known that system of SDEs (\ref{main}) has a unique weak solution \cite{BC2006}. The generator of $X$ is given by (see (\cite[(2.3)]{BC2006}))
$$
\calL f(x) =  \sum_{i = 1}^d  \lim_{\varepsilon \to 0^+} \frac{\calA_{\alpha}}{2} \int_{|w_i| > \eps} \left[f(x + a_{ii}(x) w_i e_i) + f(x - a_{ii}(x) w_i e_i) - 2 f(x)\right] \, \frac{dw_i}{|w_i|^{1 + \alpha}},
$$
where $\{e_j\}_{j = 1}^d$ is the standard basis in $\R^d$ and $\calA_{\alpha} = 2^{\alpha} \Gamma((1+\alpha)/2)/(\pi^{1/2} |\Gamma(-\alpha/2)|)$. 

Let us denote the transition density of one-dimensional symmetric standard $\alpha$-stable process of index $\alpha \in (0,2)$ by $g_t(x-y)$, $t > 0$, $x, y \in \R$. Clearly, the transition density of $Z(t)$ is given by $\prod_{j = 1}^d g_t(x_j-y_j)$.

The main result of this paper is the following theorem.

\begin{theorem}
\label{mainthm} (i) The strong Markov process $X(t)$ formed by the unique weak solution to SDE (\ref{main}) has 
a positive jointly continuous transition density function $p^A(t,x,y)$ in $(t,x,y) \in (0,\infty)\times\Rd\times\Rd$ with respect to the Lebesgue measure on $\Rd$.

(ii) The transition density solves
\begin{equation}
\label{parabolic}
\frac{\partial}{\partial t}p^A(t,x,y) = \calL p^A(t,\cdot,y)(x), 
\end{equation}
for all $t \in (0,\infty)$ and $x,y \in \R^d$. 

(iii) For any $T > 0$ there exist $c_1 = c_1(T, d, \alpha, b_1, b_2, b_3, \beta) \ge 1$ such that for any $x,y \in \R^d$, $t \in (0,T]$
\begin{equation}
\label{comparability}
c_1^{-1} \prod_{i = 1}^d g_t(x_i - y_i) \le p^A(t,x,y) \le c_1 \prod_{i = 1}^d g_t(x_i - y_i).
\end{equation}

(iv) For any $T > 0$ and $\gamma \in (0,\alpha \wedge 1)$ there exists $c_2 = c_2(T, \gamma, d, \alpha, b_1, b_2, b_3, \beta) > 0$ such that for any $x, x', y \in \R^d$, $t \in (0,T]$
\begin{equation}
\label{pAHolder}
\left| p^A(t,x,y) - p^A(t,x',y)\right| \le c_2 |x - x'|^{\gamma} t^{-\gamma/\alpha} 
\left(\prod_{i = 1}^d g_t(x_i - y_i) + \prod_{i = 1}^d g_t(x'_i - y_i)\right).
\end{equation}

(v) For any $T > 0$ and $\alpha \in (1,2)$ there exist $c_3 = c_3(T, d, \alpha, b_1, b_2, b_3, \beta) > 0$ such that for any $x,y \in \R^d$, $t \in (0,T]$
\begin{equation}
\label{gradient}
\left| \nabla_x p^A(t,x,y) \right| \le c_3 t^{-1/\alpha} p^A(t,x,y).
\end{equation}

\end{theorem}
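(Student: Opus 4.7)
The strategy follows the Levi parametrix method of Chen--Zhang~\cite{CZ}, adapted to the cylindrical $\alpha$-stable setting. For each fixed $y \in \Rd$ let
$$p^{(y)}(t,x,z) := \prod_{i=1}^d \frac{1}{a_{ii}(y)}\, g_t\!\left(\frac{x_i - z_i}{a_{ii}(y)}\right)$$
be the transition density of the diagonal constant-coefficient process driven by $A(y)$; its generator is $\calL^{(y)} = \sum_i a_{ii}(y)^{\alpha} (-\Delta_{x_i})^{\alpha/2}$, where $(-\Delta_{x_i})^{\alpha/2}$ is the one-dimensional fractional Laplacian in the variable $x_i$. By (\ref{bounded}) and the $\alpha$-self-similarity of $g_t$, $p^{(y)}$ is two-sided comparable, uniformly in $y$, to $\prod_i g_t(x_i - z_i)$, and one-dimensional stable estimates yield matching H\"older and (for $\alpha>1$) gradient bounds in $x$. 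The parametrix ansatz is
$$p^A(t,x,y) := p^{(y)}(t,x,y) + \int_0^t \int_{\Rd} p^{(y)}(t-s,x,z)\, q(s,z,y)\, dz\, ds,$$
where $q$ is obtained from the Volterra equation $q = q_0 + q_0 \ast q$ with
$$q_0(s,z,y) := (\calL - \calL^{(y)}) p^{(y)}(s,\cdot,y)(z) = \sum_{i=1}^d \bigl(a_{ii}(z)^{\alpha}-a_{ii}(y)^{\alpha}\bigr)(-\Delta_{z_i})^{\alpha/2} p^{(y)}(s,z,y).$$

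The first step is to establish the pointwise estimate
$$|q_0(s,z,y)| \le c\, s^{-1+\beta/\alpha}\, \tilde{p}^{(y)}(s,z,y),$$
where $\tilde{p}^{(y)}$ is a majorant comparable (up to constants) to $\prod_i g_s(z_i - y_i)$. This uses (\ref{aHolder}) together with boundedness of $a_{ii}$ to bound $|a_{ii}(z)^\alpha - a_{ii}(y)^\alpha|$ by $c\,|z-y|^\beta$, the one-dimensional estimate $|(-\Delta_{z_i})^{\alpha/2} g_s(\cdot)| \le c\, s^{-1} g_s(\cdot)$, and the redistribution $|z-y|^\beta \le \sum_i |z_i-y_i|^\beta$ combined with $|z_i-y_i|^\beta g_s(z_i-y_i) \le c\, s^{\beta/\alpha}\,\tilde{g}_s(z_i-y_i)$, which extracts the gain $s^{\beta/\alpha}$. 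One then solves $q = \sum_{n \ge 0} q_n$ with $q_{n+1} := q_0 \ast q_n$; uniform convergence on $(0,T]\times \Rd\times \Rd$ follows by a beta-function induction in the time variable, using the cylindrical analogue of the 3P identity
$$\int_{\Rd} \prod_{i=1}^d g_{t-s}(x_i - z_i)\, g_s(z_i - y_i)\, dz_1\cdots dz_d = \prod_{i=1}^d g_t(x_i - y_i),$$
which replaces the isotropic 3P inequality of~\cite{CZ} and reflects the independence of the coordinate components of $Z_t$.

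With $p^A$ and $q$ constructed, the five assertions follow along standard parametrix lines. Part (i) is immediate from the uniform convergence of the series and the continuity of $p^{(y)}$. For (ii), apply $\calL$ to the parametrix formula and combine $\partial_s p^{(y)} = \calL^{(y)} p^{(y)}$ with the definition of $q_0$ and the Volterra equation, to obtain~(\ref{parabolic}). The two-sided bound (iii) and the H\"older bound (iv) reduce to the same convolution calculus: the error integral is dominated by $\int_0^t s^{-1+\beta/\alpha}\,ds\,\prod_i g_t(x_i-y_i) = c\, t^{\beta/\alpha}\prod_i g_t(x_i-y_i)$, which is absorbed into the leading term for $t$ small, and the H\"older bound on $x\mapsto p^{(y)}(t,x,y)$ propagates through the same argument. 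Part (v), valid for $\alpha>1$, is analogous with the gradient bound on the frozen kernel. Finally, the weak uniqueness of solutions to~(\ref{main}) established in~\cite{BC2006} identifies $p^A$ as the transition density of $X$.

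The main difficulty lies in the anisotropy of the cylindrical operator $\calL$: its L\'evy measure is supported on the union of the rescaled coordinate axes, so the frozen kernel $p^{(y)}$ is genuinely of product form and admits no isotropic surrogate. All convolution estimates and the redistribution of $|z-y|^\beta$ across the $d$ coordinates must be carried out by hand, with the time gain $s^{\beta/\alpha}$ obtained by distributing the exponent $\beta$ among the one-dimensional densities $g_s(z_i-y_i)$ while keeping the resulting majorant integrable and compatible with the product 3P identity above. A secondary technical point is to track the constants in the Levi series uniformly in the freezing point $y$, in order to produce constants $c_1, c_2, c_3$ depending only on $T, d, \alpha, b_1, b_2, b_3, \beta$.
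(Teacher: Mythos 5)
Your overall strategy (Levi parametrix with the frozen product kernel, Volterra series for $q$, identification via weak uniqueness) is the same as the paper's, but as written the scheme has a structural error and two estimates that fail. First, in your ansatz the correction term uses the kernel frozen at the terminal point $y$, i.e.\ $\int_0^t\int_{\Rd} p^{(y)}(t-s,x,z)q(s,z,y)\,dz\,ds$, while the Volterra equation $q=q_0+q_0\ast q$ uses the two-point kernel $q_0(t-s,x,z)=(\calL^{x}-\calL^{z})p_z(t-s,\cdot)(x-z)$. These two choices are incompatible: the verification of (\ref{parabolic}) rests on the cancellation $q_0(t-s,x,z)+\calL^{z}p_z(t-s,\cdot)(x-z)=\calL^{x}p_z(t-s,\cdot)(x-z)$, which requires the correction kernel to be frozen at the integration variable $z$ (as in (\ref{defpA})), not at $y$. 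If you instead keep the freezing at $y$, the Volterra kernel forced by the equation is $(\calL^{x}-\calL^{y})p_y(t-s,\cdot)(x-z)$, which carries the factor $|x-y|^{\beta}$ rather than $|x-z|^{\beta}$; near $s\approx t$ this gives a non-integrable $(t-s)^{-1}$ singularity and the iteration does not converge. Second, your key pointwise bound $|q_0(s,z,y)|\le c\,s^{-1+\beta/\alpha}\,\tilde p^{(y)}(s,z,y)$ with $\tilde p^{(y)}$ comparable to $\prod_i g_s(z_i-y_i)$ is false off the diagonal: $(|z_i-y_i|^{\beta}\wedge 1)\,g_s(z_i-y_i)$ is not bounded by $c\,s^{\beta/\alpha}g_s(z_i-y_i)$ when $|z_i-y_i|\gg s^{1/\alpha}$. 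The correct bound (the paper's (\ref{q0})) keeps the spatial H\"older factor, so the plain product Chapman--Kolmogorov identity you invoke cannot close the induction; one genuinely needs the two-parameter $\rho_{\gamma}^{\beta}$ convolution inequalities of Lemma \ref{conv} and an induction that tracks which coordinates carry the factor $(|x_j-y_j|^{\beta}\wedge 1)$ (the functions $H_k^{L}$ in Theorem \ref{thmq}), together with the factorial decay that makes the series summable.

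Third, the lower bound in (iii) does not follow by ``absorbing the error into the leading term'': the correct estimate of the correction is $|\varphi_y(t,x)|\le c\,[\prod_i\rho_{\alpha}^{0}(t,x_i-y_i)]\,[t^{\beta/\alpha}+\sum_m(|x_m-y_m|^{\beta}\wedge 1)]$, which for $|x-y|\ge 1$ is of the same order as $p_y(t,x-y)$, so absorption only works near the diagonal. The paper proves the global lower bound in Section 5 by a separate probabilistic argument (L\'evy system formula, exit and hitting time estimates, and an induction over the number of coordinates in which $|x_i-y_i|\ge t^{1/\alpha}$), reflecting that the cylindrical process must make $k$ distinct jumps to displace $k$ coordinates; no analogue of this appears in your plan. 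Smaller omissions: proving (\ref{parabolic}) for all $x,y$ (including $x=y$) requires justifying the interchange of $\calL$ and $\partial_t$ with the time integral near $s=t$ (Lemmas \ref{fractionalheat} and \ref{interchange}), and (v) uses the lower bound of (iii) to convert $|\nabla_x p^A|\le c\,t^{-1/\alpha}\prod_i g_t(x_i-y_i)$ into (\ref{gradient}).
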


Systems of stochastic differential equations driven by cylindrical $\alpha$-stable processes have attracted a lot of attention in recent years see e.g. \cite{BC2006,PZ2011,WZ2015,PSXZ2012,Z2013,SX2014}. In \cite{BC2006} Bass and Chen proved existence and uniqueness of weak solutions of systems of SDEs (\ref{main}) under very mild assumptions on matrices $A(x)$ (i.e. they assumed that $A(x)$ are continuous and bounded in $x$ and nondegenerate for each $x$). Our paper may be treated as the first step in studying fine properties of transition densities of systems of SDEs driven by L{\'e}vy processes with singular L{\'e}vy measures. Fine properties of such transition densities are of great interest but in the case of singular L{\'e}vy measures relatively little is known.
We decided to study the particular case of diagonal matrices $A(x)$ in (\ref{main}) because in that case one can obtain sharp two-sided estimates of these densities. It seems that in the case of general non-diagonal matrices in (\ref{main}) such sharp two-sided estimates are impossible to obtain. Nevertheless, we believe that our results will help to obtain qualitative estimates of transition densities also in the case of general matrices in (\ref{main}). 

The direct inspiration to study transition densities of solutions to (\ref{main}) was a question of Zabczyk concerning gradient estimates of these densities. Another source of inspiration was a recent paper \cite{BSK2017} of Bogdan, Knopova and Sztonyk, where  they constructed heat kernels and obtained upper bounds and H{\"o}lder estimates of them for quite general anisotropic space-inhomogeneous non-local operators. However, the considered jump kernels   cannot be ``too singular''. In particular, the results from \cite{BSK2017} can be applied for systems  (\ref{main}) only when $d = 2$ and $\alpha \in (1,2)$ (see the condition $\alpha +\gamma > d$ in the assumption A1 on page 5 in \cite{BSK2017}). Moreover, even for $d = 2$ and $\alpha \in (1,2)$, the obtained estimates  are far from being optimal. 

In our paper we use a very elegant and efficient method developed by Chen and Zhang in \cite{CZ}. Their approach is based on Levi's freezing coefficient argument (cf. \cite{L1907,F1975,LSU1968}). In \cite{CZ} the non-local and non-symmetric L{\'e}vy type operators on $\Rd$ are studied with jump kernels of the type $\kappa(x,z)/|z|^{d+\alpha}$, $\alpha \in (0,2)$. It turned out that similar ideas  can be applied also in our situation where jump kernels are much more singular.  We follow the road-map from \cite{CZ} however, due to a specific structure of the operator $\calL$, there are many differences between our paper and \cite{CZ}. The main new elements, in comparison to \cite{CZ}, are the proof of crucial Theorem \ref{thmq}, the proof of Lemma \ref{interchange}, the estimates (\ref{B1inequality}-\ref{B4inequality}) and the proof of lower bound estimates of $p^A(t,x,y)$. It is worth  pointing out that in our paper we have shown that the transition density $p^A(t,x,y)$ satisfies the equation (\ref{parabolic}) for all $x,y \in \Rd$ while in  \cite{CZ} it is shown that the heat kernel $p_{\alpha}^{\kappa}(t,x,y)$ satisfies the analogous equation only when $x \ne y$. A similar remark concerns gradient estimates of $p^A(t,x,y)$, which we managed to show for all $x,y \in \Rd$. On the other hand, we were able to prove gradient estimates of $p^A(t,x,y)$ only for $\alpha \in (1,2)$ (in \cite{CZ} gradient estimates were obtained for $\alpha \in [1,2)$). It is worth mentioning  that quite recently a very interesting generalization of the results from \cite{CZ} appeared in \cite{KSV2018}.

The problem of estimates of transition densities for jump L{\'e}vy  and L{\'e}vy-type processes has been intensively studied in recent years see e.g. \cite{CZ, KSV2018, BGR2014, CK2008, CKK2011,KS2012,KS2015,K2014,M2012,KK2011}. However, relatively few results concern processes with jump kernels which are not comparable to isotropic ones. We have already mentioned here the paper \cite{BSK2017}. One should also mention the papers by Sztonyk et al. \cite{BS2007,KS2014,KS2015,S2017} but they only concern heat kernels of translation invariant generators and convolution semigroups for which the existence and many properties follow by Fourier methods. There are also known estimates of anisotropic non-convolution heat kernels given in \cite{S2010a,KS2013} however these are obtained under the assumption that the jump kernel is dominated by that of the rotation invariant stable process. For estimates of derivatives of L{\'e}vy densities we refer the reader to \cite{S2010b,BJ2007,SSW2012,KS2015,KR2016,K2014,CZ}. 

Some estimates of transition densities for processes which are solutions of systems of SDEs driven by L{\'e}vy processes with singular L{\'e}vy measures were obtained in \cite{P1997a,P1997b,P1996,I2001}. However, the results from \cite{P1997a}, when applied to system (\ref{main}), do not imply such sharp estimates which are obtained in Theorem \ref{mainthm}. In particular, they can be applied to system (\ref{main}) only when $x \to a_{ii}(x)$ are $C^{\infty}(\Rd)$ functions. What is more, even in this case, 
the upper bound estimates are of the form $\sup_{x,y \in \Rd} p^A(t,x,y) \le c t^{-d/\alpha}$, while the lower bound estimates of $p^A(t,x,y)$  are also much less precise than ours. They are precise  only for $x=y$, in which case it follows from \cite{P1997a} that $p^A(t,x,x) \approx t^{-d/\alpha}$. The results from \cite{P1997b,P1996,I2001} cannot be applied to system (\ref{main}).

The paper is organized as follows. In Section 2 we introduce the notation and collect known facts needed in the sequel. In Section 3 we construct the function $p^A(t,x,y)$ in terms  of the perturbation series $q(t,x,y)=\sum_{n = 0}^{\infty} q_n(t,x,y)$  using Picard's iteration. In  Theorem \ref{thmq}  we obtain the estimates of $q(t,x,y)$ which are absolutely crucial for the rest of the paper. 
In Section 4 we show that the semigroup defined by $P_t^A f(x) = \int_{\Rd} p^A(t,x,y) f(y) \, dy$ is a Feller semigroup.
 Next, applying  \cite{BC2006}, we argue that  $p^A(t,x,y)$ is, in fact, the transition density of the solution of system (\ref{main}) and we prove most parts of the main theorem. 
In Section 5 we 
 show lower bound estimates of $p^A(t,x,y)$ by using probabilistic arguments. 

\section{Preliminaries}
All constants appearing in this paper are positive and finite. In the whole paper we fix $T > 0$, $d \ge 2$, $d \in \N$, $\alpha \in (0,2)$, $b_1, b_2, b_3, \beta$, where $b_1, b_2, b_3, \beta$ appear in (\ref{bounded}) and (\ref{aHolder}). We adopt the convention that constants denoted by $c$ (or $c_1, c_2, \ldots$) may change their value from one use to the next. In the whole paper, unless is explicitly stated otherwise, we understand that constants denoted by $c$ (or $c_1, c_2, \ldots$) depend on $T, d, \alpha, b_1, b_2, b_3, \beta$. We  also understand that they may depend on the choice of the constant $\gamma \in (0,\beta)$ (or $\gamma \in (0,\alpha \wedge 1)$). We write $f(x) \approx g(x)$ for $x \in A$ if $f, g \ge 0$ on $A$ and there is a constant $c \ge 1$ such that $c^{-1} f(x) \le g(x) \le c f(x)$ for $x \in A$.

Denote
$$
\s_i(x) = a_{ii}^{\alpha}(x).
$$
Note that there exists $c$ such that for any $x, y \in \R^d$ we have
\begin{equation}
\label{sigmaHolder}
|\s_{i}(x) - \s_{i}(y)| \le c \left(|x - y|^{\beta} \wedge 1\right).
\end{equation}
By simple change of variable we get
$$
\calL f(x) = \sum_{i = 1}^d  \lim_{\varepsilon \to 0^+} \frac{\calA_{\alpha}}{2} \int_{|z_i| > \eps} \left[f(x + e_i z_i) + f(x - e_i z_i) - 2 f(x)\right] \, \s_i(x) \frac{dz_i}{|z_i|^{1 + \alpha}}.
$$

Let us introduce some notation which was used in \cite{CZ}. For a function $f:\R^d \to \R$ we denote
$$
\delta_f(x,z) = f(x+z) + f(x-z) - 2f(x).
$$
Similarly, for a function $f:\R_+ \times \R^d \to \R$ we write
$$
\delta_f(t,x,z) = f(t,x+z) + f(t,x-z) - 2f(t,x).
$$
We also denote
$$
\rho_{\gamma}^{\beta}(t,x) = t^{{\gamma}/{\alpha}}(|x|^{\beta} \wedge 1)(t^{1/\alpha} + |x|)^{-1-\alpha}, \quad t > 0,\, x \in \R.
$$
It is well known that
\begin{equation}
\label{standard}
g_t(x) \approx \rho_{\alpha}^{0}(t,x) \quad \quad t > 0, x \in \R.
\end{equation}

One of the most important tools used in our paper are convolution estimates \cite[(2.3-2.4)]{CZ}. They are similar to \cite[Lemma 1.4]{K2000} and \cite[Lemma 2.3]{XZ2014}. In \cite{CZ} they are stated for $t \in (0,1]$. It is easy to check that they hold also for $t \in (0,T]$. For reader's convenience we collected them in Lemma \ref{conv}.

\begin{lemma}
\label{conv}
\begin{itemize}
	\item[(i)] There is $C=C(\alpha)$ such that for any $t>0$ and any\\ $\beta_1\in [0,\alpha/2], \gamma_1\in \R$, 
	\begin{equation}
\int_{\R} \rho_{\gamma_1}^{\beta_1}(t,z)  \, d z \le C t^{\frac{\gamma_1+\beta_1-\alpha}\alpha}.
\label{conv0}
\end{equation}
\item[(ii)] For $T>0$ there is $C=C(\alpha,T)$ such that for any $0<s<t\le T$, $x\in \R$ and any $\beta_1, \beta_2\in [0,\alpha/4],
 \gamma_1,  \gamma_2 \in \R$ we have 
\begin{eqnarray}
&&\int_{\R} \rho_{\gamma_1}^{\beta_1}(t-s,x-z) \rho_{\gamma_2}^{\beta_2}(s,z) \, d z \nonumber\\
&&\le C\left[ (t-s)^{\frac{\gamma_1+\beta_1+\beta_2-\alpha}\alpha}s^{\frac{\gamma_2}\alpha}+(t-s)^{\frac{\gamma_1}\alpha}s^{\frac{\gamma_2+\beta_1+\beta_2-\alpha}\alpha}\right]\rho_{0}^0(t,x)\nonumber\\
&& + C \left[(t-s)^{\frac{\gamma_1+\beta_1-\alpha}\alpha}s^{\frac{\gamma_2}\alpha}\rho_{0}^{\beta_2}(t,x)+(t-s)^{\frac{\gamma_1}\alpha}
s^{\frac{\gamma_2+\beta_2-\alpha}\alpha}\rho_{0}^{\beta_1}(t,x)\right].\label{conv1}
\end{eqnarray}

\item[(iii)] For $T>0$ there is $C=C(\alpha, T)$ such that for any $0<t\le T$, $x\in \R$ and any $\beta_1, \beta_2\in [0,\alpha/4],
 \gamma_1,  \gamma_2 \in \R$ with $\gamma_1+\beta_1>0$ and $\gamma_2+\beta_2>0$  we have 
\begin{eqnarray}
&&\int_{0}^t \int_{\R} \rho_{\gamma_1}^{\beta_1}(t-s,x-z) \rho_{\gamma_2}^{\beta_2}(s,z) \, d z \, d s\nonumber\\
&&\le C \calB\left(\frac{\gamma_1+\beta_1}\alpha, \frac{\gamma_2+\beta_2}\alpha\right)
 \left( \rho_{\gamma_1+\gamma_2+\beta_1+\beta_2}^0+\rho_{\gamma_1+\gamma_2+\beta_2}^{\beta_1}+\rho_{\gamma_1+\gamma_2+\beta_1}^{\beta_2}\right)(t,x)\label{conv2},
\end{eqnarray}
where $\calB(u,w)$ is the Beta function.
\end{itemize}
\end{lemma}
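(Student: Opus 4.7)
The plan is to prove the three parts in order, with (i) as a direct scaling computation, (ii) as the main technical step via region splitting, and (iii) reduced to (ii) plus a Beta-function time integral. The whole lemma is essentially a one-dimensional convolution exercise with the stable-like profile $\rho_\gamma^\beta$, and the argument closely parallels the analogous estimates used in \cite{CZ}, \cite{K2000}, and \cite{XZ2014}.

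For (i), I would change variables $z=t^{1/\alpha}u$ and use the pointwise bound $|z|^{\beta_1}\wedge 1\le |z|^{\beta_1}$ globally; since $\beta_1\le \alpha/2<\alpha$, the resulting integral $\int_\R |u|^{\beta_1}(1+|u|)^{-1-\alpha}\,du$ is a finite constant depending only on $\alpha$, and multiplying through by the scaling factors yields the exponent $(\gamma_1+\beta_1-\alpha)/\alpha$.

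For (ii), which is the main step, I would split the line into $A_-=\{|z|\le |x|/2\}$ and $A_+=\{|z|>|x|/2\}$. On $A_-$ one has $|x-z|\ge |x|/2$, hence
\[
\rho_{\gamma_1}^{\beta_1}(t-s,x-z)\le C(t-s)^{\gamma_1/\alpha}(|x|^{\beta_1}\wedge 1)\bigl((t-s)^{1/\alpha}+|x|\bigr)^{-1-\alpha},
\]
which can be pulled out of the $z$-integral; applying (i) to $\rho_{\gamma_2}^{\beta_2}(s,z)$ and bounding $((t-s)^{1/\alpha}+|x|)^{-1-\alpha}\le C(t^{1/\alpha}+|x|)^{-1-\alpha}$ produces the term with spatial profile $\rho_0^{\beta_1}(t,x)$. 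The region $A_+$ is treated symmetrically by exchanging the roles of the two kernels, producing the term with $\rho_0^{\beta_2}(t,x)$. To obtain the two $\rho_0^0(t,x)$ terms, I would observe that when $|x|\lesssim t^{1/\alpha}$ one has $(|x|^{\beta_i}\wedge 1)\le Ct^{\beta_i/\alpha}$, so both weights can be absorbed simultaneously into the prefactor, giving the exponent $(\gamma_1+\beta_1+\beta_2-\alpha)/\alpha$ on either $t-s$ or $s$ according to which kernel is integrated out. Collecting the four contributions gives (\ref{conv1}). The hypothesis $\beta_i\le \alpha/4$ (rather than the $\beta\le \alpha/2$ of part (i)) is used precisely so that $\beta_1+\beta_2\le \alpha/2<\alpha$, keeping the tail integrability of (i) intact when both weights pile up on one side.

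For (iii), I would insert (\ref{conv1}) into the double integral and integrate in $s\in(0,t)$. Each of the four terms on the right-hand side of (\ref{conv1}) factors as $s^{a-1}(t-s)^{b-1}$ times one of $\rho_0^0,\rho_0^{\beta_1},\rho_0^{\beta_2}$, and
\[
\int_0^t s^{a-1}(t-s)^{b-1}\,ds = t^{a+b-1}\calB(a,b)
\]
is finite precisely when $a,b>0$, which matches the assumptions $\gamma_1+\beta_1>0$ and $\gamma_2+\beta_2>0$. The resulting power of $t$ is absorbed into the profile via $t^{\mu/\alpha}\rho_0^\beta(t,x)=\rho_\mu^\beta(t,x)$, and the two $\rho_0^0$ contributions combine into the single $\rho_{\gamma_1+\gamma_2+\beta_1+\beta_2}^0(t,x)$ term appearing in (\ref{conv2}). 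The main obstacle I expect is the bookkeeping in (ii): there are three scales in play, $(t-s)^{1/\alpha}$, $s^{1/\alpha}$, and $|x|$, and one must decide for each weight $|x|^{\beta_i}\wedge 1$ whether to keep it as the spatial factor $\rho_0^{\beta_i}$ or to convert it into a temporal factor $t^{\beta_i/\alpha}$, and this bifurcation is what produces the four distinct terms on the right-hand side of (\ref{conv1}).
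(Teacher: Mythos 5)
Parts (i) and (iii) of your plan are fine: (i) is exactly the scaling computation, and once (ii) is available, your reduction of (iii) via $\int_0^t (t-s)^{a-1}s^{b-1}\,ds=\calB(a,b)\,t^{a+b-1}$ together with monotonicity of the Beta function does produce (\ref{conv2}) with the stated constant. (For what it is worth, the paper does not prove this lemma at all; it imports it from \cite{CZ}, remarking only that the extension from $t\in(0,1]$ to $t\in(0,T]$ is routine, so the only thing to check here is whether your sketch of the \cite{CZ}-type argument is sound.)

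The genuine gap is in (ii), at the step where, on $A_-=\{|z|\le|x|/2\}$, you pass from $((t-s)^{1/\alpha}+|x|)^{-1-\alpha}$ to $C(t^{1/\alpha}+|x|)^{-1-\alpha}$. That inequality is reversed: since $t-s<t$, the left-hand side is the \emph{larger} quantity, and the bound genuinely fails when $s$ is close to $t$ and $|x|\lesssim t^{1/\alpha}$ (take $x=0$, $s=t-\eps$: the left side is $\eps^{-(1+\alpha)/\alpha}$ while the right side is of order $t^{-(1+\alpha)/\alpha}$). In that regime the correct contribution to (\ref{conv1}) comes from pulling out the kernel whose \emph{time} variable is comparable to $t$ (here the $s$-kernel), not the one whose spatial argument is comparable to $|x|$; a decomposition in space alone cannot detect this. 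The standard repair, which is how the quoted estimate is actually proved, is the sub-convolution inequality for the profile $\varrho(u,w)=(u^{1/\alpha}+|w|)^{-1-\alpha}$: since $(t-s)^{1/\alpha}+|x-z|+s^{1/\alpha}+|z|\ge c\,(t^{1/\alpha}+|x|)$, one has $\min\{\varrho(t-s,x-z),\varrho(s,z)\}\le C\varrho(t,x)$, hence $\varrho(t-s,x-z)\varrho(s,z)\le C\varrho(t,x)\left[\varrho(t-s,x-z)+\varrho(s,z)\right]$; then split the weights by $(|x-z|^{\beta_1}\wedge 1)\le(|x|^{\beta_1}\wedge 1)+(|z|^{\beta_1}\wedge 1)$ (and symmetrically for $\beta_2$) and integrate the surviving single kernel with part (i). This also corrects your account of where the two $\rho_0^0(t,x)$ terms come from: they arise when both weights fall on the integrated variable, producing $(|z|^{\beta_1+\beta_2}\wedge 1)$ and hence the exponent $(\gamma+\beta_1+\beta_2-\alpha)/\alpha$ via (i) (this is where $\beta_1+\beta_2\le\alpha/2$ is used, as you correctly anticipated), not from absorbing the weights in the region $|x|\lesssim t^{1/\alpha}$.
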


Similarly as in \cite{CZ} we introduce, for $y \in \R^d$, the freezing operator $\calL^{y}$   by
$$
\calL^{y} f(x) = \frac{\calA_{\alpha}}{2} \sum_{i = 1}^d \int_{\R} \delta_f(x,e_i z_i) \, \s_i(y) \frac{dz_i}{|z_i|^{1 + \alpha}}
$$
and
$$
\calL^{y} f(t,x) = \frac{\calA_{\alpha}}{2} \sum_{i = 1}^d \int_{\R} \delta_f(t,x,e_i z_i) \, \s_i(y) \frac{dz_i}{|z_i|^{1 + \alpha}}.
$$
Put
$$
p_y(t,x) = \prod_{i = 1}^d \frac{1}{a_{ii}(y)} g_t\left(\frac{x_i}{a_{ii}(y)}\right).
$$
It is clear that $p_y(t,x)$ is the heat kernel of the operator $\calL^{y}$. In particular, we have
\begin{equation}
\label{pyparabolic}
\frac{\partial}{\partial t} p_y(t,x) = \calL^{y}p_y(t,x), \quad \quad t > 0,\, x, y \in \R^d.
\end{equation}

In the sequel we will use the following standard estimate. For any $\gamma \in (0,1]$ there exists $c = c(\gamma)$ such that for any $\theta \ge 1$ we have 
\begin{equation}
\label{betain}
\int_0^t (t - s)^{\gamma - 1} s^{\theta - 1} \, ds \le \frac{c}{\theta^{\gamma}} t^{(\gamma - 1) + (\theta - 1) + 1}.
\end{equation}

We use the notation $\N_0 = \N \cup \{0\}$.

\section{Upper bound estimates}
The main aim of this section is to construct the function $p^A(t,x,y)$.  This is done by using Levi's method. Is is worth  mentioning that this method was used in the framework of pseudodifferential operators by Kochubei \cite{K1989}. In recent years it was used in several papers to study gradient and Schr{\"o}dinger perturbations of fractional Laplacians and relativistic stable operators see e.g. \cite{BJ2007,JS2012,JS2010,CH,CKS2012,CKS2015,XZ2014}. As we have already mentioned we use the approach by Chen and Zhang \cite{CZ}. It is worth  adding that in \cite{CZ}, in contrast to previous papers, a new way of ``freezing'' coefficient was used. 

Now, we briefly present the main steps used in this section. We define $p^A(t,x,y)$ by (\ref{defpA}). Heuristically, $p^A(t,x,y)$ is equal to the transition density $p_y(t,x-y)$ (of the process with the ``frozen'' jump measure  corresponding to the generator  
$\calL^y$) plus some correction $\int_0^t \int_{\Rd} p_z(t-s,x,z)q(s,z,y) \, dz \, ds$, which is given in terms of the perturbation series $q(t,x,y) = \sum_{n = 0}^{\infty} q_n(t,x,y)$. The most difficult result in this section is Theorem \ref{thmq} which gives upper bound estimates of $q(t,x,y)$. Due to a different structure of the generator $\calL$ in comparison to the L{\'e}vy-type operator $\calL_{\alpha}^{\kappa}$ from \cite{CZ} there are essential differences between our proof and analogous proof in \cite{CZ}, see in particular the definition of the auxiliary function $H_k^L(t,x,y)$ and the induction proof of (\ref{qn}). The next important step in this section is Theorem \ref{Holder} where we derive H{\"o}lder type estimates of $q(t,x,y)$. We also show crucial Lemma \ref{gradientvarphi} which is the main step in obtaining gradient estimates of $p^A(t,x,y)$.

For $x, y \in \R^d$, $t > 0$, let
$$
q_0(t,x,y) = \left(\calL^{x} - \calL^{y}\right) p_y(t,\cdot)(x-y)
$$
and for $n \in \N$ let
\begin{equation}
\label{defqn}
q_n(t,x,y) = \int_0^t \int_{\R^d} q_0(t-s,x,z)q_{n-1}(s,z,y) \, dz \, ds.
\end{equation}
For $x, y \in \R^d$, $t > 0$ we define
\begin{equation}
\nonumber
q(t,x,y) = \sum_{n = 0}^{\infty} q_n(t,x,y)
\end{equation}
and 
\begin{equation}
\label{defpA}
p^A(t,x,y) = p_y(t,x-y) + \int_0^t \int_{\Rd} p_z(t-s,x,z)q(s,z,y) \, dz \, ds.
\end{equation}

By \cite[(2.28)]{CZ} and (\ref{standard}) one easily obtains
\begin{lemma}
\label{generatorpy1}
For any $t \in (0,T]$ and $x, y \in \R^d$ we have
$$
\sum_{k = 1}^d \int_{\R} \left| \delta_{p_y}(t,x,z_k e_k) \right| \frac{d z_k}{|z_k|^{1 + \alpha}}
\le c t^{d - 1} \prod_{i = 1}^d \rho_0^0(t,x_i).
$$
\end{lemma}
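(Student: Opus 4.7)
The plan is to exploit the tensor-product structure of $p_y(t,\cdot)$ and reduce the $d$-dimensional estimate to a one-dimensional one. Writing $p_y(t,x) = \prod_{i=1}^d h_i(x_i)$ with $h_i(u) := a_{ii}(y)^{-1} g_t(u/a_{ii}(y))$, and noting that the increment $z_k e_k$ affects only the $k$-th coordinate, the second difference factors as
$$
\delta_{p_y}(t,x,z_k e_k) = \Big(\prod_{i\ne k} h_i(x_i)\Big)\,\delta_{h_k}(x_k,z_k),
$$
so I would pull the product over $i\ne k$ out of the $z_k$-integral to get
$$
\int_\R \bigl|\delta_{p_y}(t,x,z_k e_k)\bigr|\frac{dz_k}{|z_k|^{1+\alpha}} = \Big(\prod_{i\ne k} h_i(x_i)\Big)\int_\R \bigl|\delta_{h_k}(x_k,z_k)\bigr|\frac{dz_k}{|z_k|^{1+\alpha}}.
$$

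For the one-dimensional factor I would use the change of variable $w=z_k/a_{kk}(y)$, which turns the integral into $a_{kk}(y)^{-\alpha}\int_\R|\delta_{g_t}(x_k/a_{kk}(y),w)|\,dw/|w|^{1+\alpha}$. By \cite[(2.28)]{CZ} this is bounded by a constant times $\rho_0^0(t,x_k/a_{kk}(y))$, and the uniform bounds $b_1\le a_{kk}(y)\le b_2$ together with the spatial scaling of $\rho_0^0$ make this comparable to $\rho_0^0(t,x_k)$ with constants depending only on $\alpha,b_1,b_2$. For the remaining factors, (\ref{standard}) combined with $b_1\le a_{ii}(y)\le b_2$ gives $h_i(x_i)\le c\,g_t(x_i)\le c'\rho_\alpha^0(t,x_i)=c' t\,\rho_0^0(t,x_i)$. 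Multiplying the $d-1$ factors for $i\ne k$ yields $t^{d-1}\prod_{i\ne k}\rho_0^0(t,x_i)$, which combined with the 1-d bound produces $c\,t^{d-1}\prod_{i=1}^d\rho_0^0(t,x_i)$ for each $k$; summing over $k=1,\dots,d$ absorbs the factor $d$ into the constant and completes the proof.

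The only nonroutine point is the one-dimensional second-difference integral, which is controlled by $\rho_0^0(t,\cdot)$ via \cite[(2.28)]{CZ}; its applicability to the rescaled kernel $h_k$ follows from the uniform upper and lower bounds on $a_{kk}(y)$ and the homogeneity of $\rho_0^0$ in its spatial variable. Everything else is bookkeeping: powers of $t$ are tracked through the identity $\rho_\alpha^0(t,\cdot)=t\,\rho_0^0(t,\cdot)$, so that the $d-1$ ``intact'' factors each contribute one power of $t$, the ``hit'' factor contributes none, and the total matches the claimed $t^{d-1}$.
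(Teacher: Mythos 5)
Your argument is correct and is exactly the route the paper intends (the paper states the lemma as an immediate consequence of \cite[(2.28)]{CZ} and (\ref{standard})): factor the product kernel, bound the one-dimensional second-difference integral by $c\,\rho_0^0(t,x_k)$ via \cite[(2.28)]{CZ} after rescaling with $b_1\le a_{kk}(y)\le b_2$, and use $g_t\approx\rho_\alpha^0=t\,\rho_0^0$ for the remaining $d-1$ factors. The only blemish is the scaling prefactor, which should be $a_{kk}(y)^{-1-\alpha}$ rather than $a_{kk}(y)^{-\alpha}$, but this is absorbed into the constant and does not affect the proof.
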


An immediate consequence of the above lemma and (\ref{pyparabolic}) is the following estimate 
\begin{equation}
\label{dert}\left|\frac{\partial}{\partial t} p_y(t,x)\right|\le  ct^{d - 1} \prod_{i = 1}^d \rho_0^0(t,x_i),
\end{equation}
for $t \in (0,T]$, $x, y \in \R^d$.

\begin{theorem}
\label{thmq}
The series $\sum_{n = 0}^{\infty} q_n(t,x,y)$ is absolutely and locally uniformly convergent on $(0,T]\times\R^d\times\R^d$. For any $x,y \in \R^d$, $t \in (0,T]$ we have
\begin{equation}
\label{qest}
|q(t,x,y)| \le c t^{d - 1} \left[\prod_{i = 1}^d \rho_0^0(t,x_i-y_i)\right] \left[t^{\beta/\alpha} + \sum_{m = 1}^d \left(|x_m-y_m|^{\beta} \wedge 1\right)\right].
\end{equation}
Moreover, $q(t,x,y)$ is jointly continuous in $(t,x,y) \in (0,T]\times\Rd\times\Rd$.
\end{theorem}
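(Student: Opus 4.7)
The proof follows Levi's iteration scheme: estimate $q_0$ directly, bound each $q_n$ inductively from (\ref{defqn}), then sum the series and transfer continuity through the locally uniform limit. For the \textbf{base case}, I would write
$$q_0(t,x,y)=\frac{\calA_\alpha}{2}\sum_{i=1}^d(\s_i(x)-\s_i(y))\int_{\R}\delta_{p_y(t,\cdot)}(x-y,e_iz_i)\frac{dz_i}{|z_i|^{1+\alpha}},$$
control $|\s_i(x)-\s_i(y)|$ by (\ref{sigmaHolder}), dispatch the singular integrals via Lemma \ref{generatorpy1}, and use the identity $\rho_0^\beta(t,x)=(|x|^\beta\wedge 1)\rho_0^0(t,x)$ to obtain
$$|q_0(t,x,y)|\le c\,t^{d-1}\sum_{m=1}^d\rho_0^\beta(t,x_m-y_m)\prod_{i\neq m}\rho_0^0(t,x_i-y_i),$$
which already has the structure of the right-hand side of (\ref{qest}).

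For the inductive step I would introduce the auxiliary family
$$H_k^L(t,x,y):=t^{d-1+k\beta/\alpha}\prod_{i\in L}\rho_0^\beta(t,x_i-y_i)\prod_{i\notin L}\rho_0^0(t,x_i-y_i),\qquad k\in\N_0,\ L\subset\{1,\ldots,d\},$$
so that $|q_0|\le c\sum_{|L|=1}H_0^L$. The central claim is that the operation $f\mapsto\int_0^t\!\int_{\Rd}q_0(t-s,x,z)f(s,z,y)\,dz\,ds$ sends each $H_k^L$ to a finite linear combination of members of the same family with explicit coefficients. Because $q_0$ and $H_k^L$ factor in the spatial variables, Fubini reduces the $\Rd$-integration to $d$ independent one-dimensional convolutions, each governed by Lemma \ref{conv}(ii); the resulting factors repackage into $\rho_0^0$ and $\rho_0^\beta$ evaluated at $(t,x_i-y_i)$, and the remaining $ds$-integral yields a Beta function via (\ref{betain}). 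The key observation is that the Hölder factor carried by $q_0$ on coordinate $m$ makes the $(t-s)$-exponent strictly better than $-1$ for that coordinate, so after absorbing the prefactors $(t-s)^{d-1}$ and $s^{d-1+k\beta/\alpha}$ the Beta integrand has both exponents $>-1$ and the integration either triggers a growth in $L$ or contributes an extra factor $t^{\beta/\alpha}$ (that is, $k\mapsto k+1$).

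Induction on $n$ yields $|q_n(t,x,y)|\le\sum_{k,L}C_n(k,L)\,H_k^L(t,x,y)$, and the asymptotic $\calB(a,n\beta/\alpha)\sim\Gamma(a)(n\beta/\alpha)^{-a}$ forces $\sum_{n\ge 0}C_n(k,L)$ to converge for every $(k,L)$. Summing gives locally uniform convergence of $\sum_n q_n$ on $(0,T]\times\R^d\times\R^d$ to $q$, with $|q|\le c\sum_{k,L}H_k^L$. Every $H_k^L$ with $|L|\ge 1$ is dominated by $t^{d-1}(\prod_i\rho_0^0)\sum_m(|x_m-y_m|^\beta\wedge 1)$ and every $H_k^\emptyset$ with $k\ge 1$ by $t^{d-1+\beta/\alpha}\prod_i\rho_0^0$, so the total matches (\ref{qest}). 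Joint continuity of each $q_n$ follows from continuity of $(x,y)\mapsto\s_i(x)-\s_i(y)$ and $(t,x,y)\mapsto p_y(t,x-y)$ by dominated convergence in (\ref{defqn}); continuity of $q$ then follows from the locally uniform convergence.

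The \textbf{main obstacle} is the design of the family $H_k^L$ and the verification that $q_0$-convolution preserves it with coefficients summable in $n$. The isotropic jump kernel in \cite{CZ} admits a single scalar-indexed auxiliary function, whereas the cylindrical singular Lévy measure here only interacts with one axis per convolution step, making the subset index $L$ unavoidable; the combinatorial branching from Lemma \ref{conv}(ii) (up to $4^d$ terms per coordinate pass) must be absorbed into the single Beta-decay from the $ds$-integral, and it is exactly this balance that dictates the two-index form of $H_k^L$.
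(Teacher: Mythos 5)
Your proposal follows essentially the same route as the paper's proof: the same base estimate for $q_0$ via (\ref{sigmaHolder}) and Lemma \ref{generatorpy1}, the same two-index auxiliary family $H_k^L$ (the paper encodes $L$ as a vector with entries $0$ or $\beta$, equivalent to your subset index), the same induction in which Lemma \ref{conv} handles the coordinatewise convolutions and (\ref{betain}) supplies the Beta-type decay that beats the combinatorial branching, and the same passage to (\ref{qest}) and to joint continuity via locally uniform convergence. The only cosmetic difference is bookkeeping: the paper constrains the indices by $k+|L|=n+1$ and carries the explicit coefficient $MC^n/((n+1)!)^{\beta/\alpha}$, which is the precise form of your claim that the coefficients $C_n(k,L)$ are summable.
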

\begin{proof}
By (\ref{sigmaHolder}) and then Lemma \ref{generatorpy1} we get 
\begin{eqnarray}
|q_0(t,x,y)| &\le&
\frac{\calA_{\alpha}}{2} \sum_{k = 1}^d \int_{\R} \left|\delta_{p_y}(t,x-y,e_k z_k)\right| \left|\s_k(x) - \s_k(y) \right| \, \frac{dz_k}{|z_k|^{1 + \alpha}}\nonumber\\
&\le& c \left(|x-y|^{\beta} \wedge 1\right) \sum_{k = 1}^d \int_{\R} \left|\delta_{p_y}(t,x-y,e_k z_k)\right| \, \frac{dz_k}{|z_k|^{1 + \alpha}}\nonumber\\
&\le& M t^{d-1} \left[\sum_{m = 1}^d \left(|x_m-y_m|^{\beta} \wedge 1\right)\right] \left[\prod_{k = 1}^d \rho_0^0(t,x_k-y_k)\right], \label{q0}
\end{eqnarray}
where $M =M(T, d, \alpha, b_1, b_2, b_3, \beta)$.

Put 
$$
\text{I} = \{L=(l_1,\ldots,l_d): \, \forall i \in \{1,\ldots,d\} \,\, l_i = 0 \,\,\, \text{or} \,\,\, l_i = \beta\}.
$$
For any $L=(l_1,\ldots,l_d) \in \text{I}$ denote
$$
|L| = \frac{1}{\beta} \sum_{i = 1}^d l_i.
$$
For $k \in \N_0$ and $L=(l_1,\ldots,l_d) \in \text{I}$ put
$$
H_k^L(t,x,y) = t^{d-1+k\beta/\alpha} \left[\prod_{i = 1}^d \rho_0^0(t,x_i-y_i)\right] 
\left[\prod_{j = 1}^d \left(|x_j-y_j|^{l_j} \wedge 1\right)\right].
$$
We will show that there is $C=C(T, d, \alpha, b_1, b_2, b_3, \beta)$ such that  for any $n \in \N_0$, $x,y \in \R^d$, $t \in (0,T]$,
\begin{equation}
\label{qn}
|q_n(t,x,y)| \le  \frac{MC^n}{\left((n+1)!\right)^{\beta/\alpha}} \sum_{\substack{k \in \N_0, \, L \in \text{I} \\ k + |L| = n+1}} H_k^L(t,x,y),
\end{equation}
where $M$ is the constant from (\ref{q0}).
Let 
%
%
$$
D(t,x,y,m,k,L) = M \int_0^t \int_{\R^d} H_0^{L_m}(t-s,x,z)  H_k^L(s,z,y) \, dz \, ds,
$$
where $L_m\in \text{I}$ is such that $l_m=\beta$ and $|L_m|=1$. Observe that (\ref{q0}) can be rewritten as 

\begin{eqnarray}
|q_0(t,x,y)| &\le&  M \sum^d_{m=1} H_0^{L_m}(t,x, y). \label{q01}
\end{eqnarray}
We will prove (\ref{qn}) by induction. 
The main step consists of proving that for any $n\in \N$,
\begin{equation}
\label{qn11}
\sum_{m = 1}^d \sum_{\substack{k \in \N_0, \, L \in \text{I} \\ k + |L| = n+1}} D(t,x,y,m,k,L)\le \frac{C}{(n+2)^{\beta/\alpha}} \sum_{\substack{k \in \N_0, \, L \in \text{I} \\ k + |L| = n+2}} H_k^L(t,x,y).
\end{equation}

For $n = 0$ the estimate  (\ref{qn}) holds by (\ref{q01}). 


 Assume that (\ref{qn}) holds for some $n \in \N_0$.  By (\ref{defqn}), (\ref{q01}) and our induction hypothesis we obtain
\begin{equation}
\label{qn1}
|q_{n+1}(t,x,y)| \le  \frac{MC^n}{\left((n+1)!\right)^{\beta/\alpha}} \sum_{m = 1}^d \sum_{\substack{k \in \N_0, \, L \in \text{I} \\ k + |L| = n+1}} D(t,x,y,m,k,L).
\end{equation}
Then, if (\ref{qn11}) is true,  then  (\ref{qn}) holds for $n+1$. Hence in order to complete the proof it is enough to show  (\ref{qn11}).

To this end we consider 3 cases.

\vskip 10pt

{\bf{Case 1.}} $L = (0,\ldots,0)$, $k = n+1$.

We have
\begin{eqnarray*}
D(t,x,y,m,k,L) &=& M \int_0^t s^{n \beta/\alpha} \left[\prod_{\substack{i = 1\\ i \ne m}}^d \int_{\R} 
\rho_{\alpha}^0(t-s,x_i-z_i) \rho_{\alpha}^0(s,z_i-y_i) \, d z_i\right] \\
&& \times \int_{\R} \rho_{0}^{\beta}(t-s,x_m-z_m) \rho_{\beta}^0(s,z_m-y_m) \, d z_m \, ds.
\end{eqnarray*}

By (\ref{conv1}),  we obtain
$$
\int_{\R} \rho_{\alpha}^0(t-s,x_i-z_i) \rho_{\alpha}^0(s,z_i-y_i) \, d z_i \le c \rho_{\alpha}^0(t,x_i-y_i)
$$
and
\begin{eqnarray*}
&& \int_{\R} \rho_{0}^{\beta}(t-s,x_m-z_m) \rho_{\beta}^0(s,z_m-y_m) \, d z_m\\
&& \le c \left[(t-s)^{(\beta - \alpha)/\alpha} s^{\beta/\alpha} \rho_{0}^0(t,x_m-y_m)
+ s^{(2\beta - \alpha)/\alpha} \rho_{0}^0(t,x_m-y_m) \right.\\
&&
\left. \,\,\,\,\,\, + s^{(\beta - \alpha)/\alpha} \rho_{0}^{\beta}(t,x_m-y_m)\right].
\end{eqnarray*}
Hence
\begin{eqnarray*}
D(t,x,y,m,k,L) &\le& c t^{d-1} \left[\prod_{i = 1}^d \rho_0^0(t,x_i-y_i)\right]\\
&& \times \left[\int_0^t (t-s)^{(\beta - \alpha)/\alpha} s^{(n+1)\beta/\alpha} \,ds
+ \int_0^t s^{((n+2)\beta - \alpha)/\alpha} \,ds \right. \\
&&
\left. \,\,\,\, + \int_0^t s^{((n+1)\beta - \alpha)/\alpha} \,ds \left(|x_m-y_m|^{\beta} \wedge 1\right)\right].
\end{eqnarray*}
By (\ref{betain}) this implies that 
\begin{eqnarray}
\nonumber
D(t,x,y,m,k,L) &\le& \frac{c}{(n+1)^{\beta/\alpha}} t^{d-1} \left[\prod_{i = 1}^d \rho_0^0(t,x_i-y_i)\right] \\
\label{Case1in}
&&  \times \left[t^{(n+2)\beta/\alpha} +  t^{(n+1)\beta/\alpha}\left(|x_m-y_m|^{\beta} \wedge 1\right)\right].
\end{eqnarray}

\vskip 10pt

{\bf{Case 2.}} $L = (l_1,\ldots,l_d) \ne (0,\ldots,0)$, $l_m = 0$.

Put $Z(L) = \{i \in \{1,\ldots,d\}: \, l_i = \beta\}$ and $i(L) = \inf Z(L)$. Clearly $m \notin Z(L)$. We have
\begin{eqnarray*}
&& D(t,x,y,m,k,L)
= M \int_0^t \int_{\R} \rho_{0}^{\beta}(t-s,x_m-z_m) \rho_{\alpha}^0(s,z_m-y_m) \, d z_m\\
&& \times \int_{\R} \rho_{\alpha}^0(t-s,x_{i(L)}-z_{i(L)}) \rho_{0}^{\beta}(s,z_{i(L)}-y_{i(L)}) \, d z_{i(L)}\\
&& \times \left[ \prod_{\substack{i \in Z(L)\\ i \ne i(L)}} 
\int_{\R} \rho_{\alpha}^0(t-s,x_{i}-z_{i}) \rho_{\alpha}^{\beta}(s,z_{i}-y_{i}) \, d z_{i} \right] \\
&& \times \left[ \prod_{\substack{i \notin Z(L)\\ i \ne m}} 
\int_{\R} \rho_{\alpha}^0(t-s,x_{i}-z_{i}) \rho_{\alpha}^{0}(s,z_{i}-y_{i}) \, d z_{i} \right] s^{k\beta/\alpha} \, ds.
\end{eqnarray*}
By  (\ref{conv1}) this is bounded from above by 
\begin{eqnarray*}
&& c\int_0^t \left[(t-s)^{(\beta-\alpha)/\alpha} s \rho_{0}^{0}(t,x_m-y_m) 
+ t^{\beta/\alpha}  \rho_{0}^{0}(t,x_m-y_m) 
+ \rho_{0}^{\beta}(t,x_m-y_m) \right]\\
&& \times \left[t^{\beta/\alpha} \rho_{0}^{0}(t,x_{i(L)}-y_{i(L)})
+ t  s^{(\beta-\alpha)/\alpha} \rho_{0}^{0}(t,x_{i(L)}-y_{i(L)}) 
+ \rho_{0}^{\beta}(t,x_{i(L)}-y_{i(L)})\right]\\
&& \times \left[\prod_{\substack{i \in Z(L)\\ i \ne i(L)}} 
\left[t^{(\alpha + \beta)/\alpha} \rho_{0}^{0}(t,x_i-y_i) + t \rho_{0}^{\beta}(t,x_i-y_i)\right]\right]\\
&& \times \left[\prod_{\substack{i \notin Z(L)\\ i \ne m}}
t \rho_{0}^{0}(t,x_i-y_i) \right] s^{k\beta/\alpha} \, ds.
\end{eqnarray*}
Note that $\# Z(L) = |L|$. We have
\begin{eqnarray}
\nonumber
&& \prod_{\substack{i \in Z(L)\\ i \ne i(L)}} 
\left[t^{(\alpha + \beta)/\alpha} \rho_{0}^{0}(t,x_i-y_i) + t \rho_{0}^{\beta}(t,x_i-y_i)\right] \le c t^{|L| - 1} 
\left(\prod_{\substack{i \in Z(L)\\ i \ne i(L)}} \rho_{0}^{0}(t,x_i-y_i)\right)\\
\label{ZLin}
&& \times
\sum_{\substack{r \le |L| - 1\\ r \in \N_0}} \,\, \sum_{\{k_1,\ldots,k_r\} \subset Z(L) \setminus \{i(L)\}} 
t^{(|L| - r - 1)\beta/\alpha}
\prod_{i = 1}^r \left(|x_{k_i} - y_{k_i}|^{\beta} \wedge 1\right),
\end{eqnarray}
where for $r = 0$ we understand that
$
\prod_{i = 1}^r \left(|x_{k_i} - y_{k_i}|^{\beta} \wedge 1\right) = 1.
$
It follows that
\begin{eqnarray*}
&& D(t,x,y,m,k,L) \le c t^{d-2} \left[\prod_{i = 1}^d \rho_0^0(t,x_i-y_i)\right]\\
&& \times
\sum_{\substack{r \le |L| - 1\\ r \in \N_0}} \,\, \sum_{\{k_1,\ldots,k_r\} \subset Z(L) \setminus \{i(L)\}} 
t^{(|L| - r - 1)\beta/\alpha}
\left[\prod_{i = 1}^r \left(|x_{k_i} - y_{k_i}|^{\beta} \wedge 1\right)\right] \\
&& \times \left[t^{(\alpha + \beta)/\alpha} \int_0^t (t-s)^{(\beta - \alpha)/\alpha} s^{k \beta/\alpha} \, ds
+ t \int_0^t (t-s)^{(\beta - \alpha)/\alpha} s^{(\beta + k \beta)/\alpha} \, ds \right. \\
&& + t \int_0^t (t-s)^{(\beta - \alpha)/\alpha} s^{k \beta/\alpha} \, ds \left(|x_{i(L)} - y_{i(L)}|^{\beta} \wedge 1\right)  \\
&& + t^{2\beta/\alpha} \int_0^t  s^{k \beta/\alpha} \, ds
+ t^{(\alpha + \beta)/\alpha} \int_0^t s^{(\beta - \alpha + k \beta)/\alpha} \, ds
+ t^{\beta/\alpha} \int_0^t  s^{k \beta/\alpha} \, ds \left(|x_{i(L)} - y_{i(L)}|^{\beta} \wedge 1\right)\\
&& \times \left[t^{\beta/\alpha} \int_0^t  s^{k \beta/\alpha} \, ds
+ t \int_0^t s^{(\beta - \alpha + k \beta)/\alpha} \, ds
+\int_0^t  s^{k \beta/\alpha} \, ds \left(|x_{i(L)} - y_{i(L)}|^{\beta} \wedge 1\right)\right]\\ 
&& \times \left. \left(|x_{m} - y_{m}|^{\beta} \wedge 1\right)\right].
\end{eqnarray*}
Using this and (\ref{betain}) we get
\begin{eqnarray*}
&& D(t,x,y,m,k,L) \le c t^{d-2} \left[\prod_{i = 1}^d \rho_0^0(t,x_i-y_i)\right]\\
&& \times
\sum_{\substack{r \le |L| - 1\\ r \in \N_0}} \,\, \sum_{\{k_1,\ldots,k_r\} \subset Z(L) \setminus \{i(L)\}} 
t^{(|L| - r - 1)\beta/\alpha}
\left[\prod_{i = 1}^r \left(|x_{k_i} - y_{k_i}|^{\beta} \wedge 1\right)\right]\\
&& \times \frac{1}{(k+1)^{\beta/\alpha}} 
\left[ t^{(\alpha + k \beta + 2 \beta)/\alpha} 
+ t^{(\alpha + k \beta + \beta)/\alpha} \left(|x_{i(L)} - y_{i(L)}|^{\beta} \wedge 1\right) \right. \\
&& \left. t^{(\alpha + k \beta + \beta)/\alpha} \left(|x_{m} - y_{m}|^{\beta} \wedge 1\right)
+ t^{(\alpha + k \beta)/\alpha} \left(|x_{i(L)} - y_{i(L)}|^{\beta} \wedge 1\right) \left(|x_{m} - y_{m}|^{\beta} \wedge 1\right)\right].
\end{eqnarray*}
Note that $k + |L| = n+1$. It follows that 
\begin{eqnarray}
\nonumber
&& D(t,x,y,m,k,L) \le \frac{c t^{d-1}}{(k+1)^{\beta/\alpha}} \left[\prod_{i = 1}^d \rho_0^0(t,x_i-y_i)\right]\\
\label{Case2in}
&& \times \sum_{\substack{r \le |L| + 1\\ r \in \N_0}} \,\, \sum_{\{k_1,\ldots,k_r\} \subset Z(L) \cup \{m\}} 
t^{(n+2-r)\beta/\alpha} \prod_{i = 1}^r \left(|x_{k_i} - y_{k_i}|^{\beta} \wedge 1\right).
\end{eqnarray}

\vskip 10pt

{\bf{Case 3.}} $L = (l_1,\ldots,l_d)$, $l_m = \beta$.

We have
\begin{eqnarray*}
&& D(t,x,y,m,k,L)
= M \int_0^t \int_{\R} \rho_{0}^{\beta}(t-s,x_m-z_m) \rho_{0}^{\beta}(s,z_m-y_m) \, d z_m\\
&& \times \left[\prod_{\substack{i \in Z(L)\\ i \ne m}} 
\int_{\R} \rho_{\alpha}^0(t-s,x_{i}-z_{i}) \rho_{\alpha}^{\beta}(s,z_{i}-y_{i}) \, d z_{i}\right] \\
&& \times \left[\prod_{i \notin Z(L)}
\int_{\R} \rho_{\alpha}^0(t-s,x_{i}-z_{i}) \rho_{\alpha}^{0}(s,z_{i}-y_{i}) \, d z_{i}\right] s^{k\beta/\alpha} \, ds.
\end{eqnarray*}
By  $(\ref{conv1})$, this is bounded from above by 
\begin{eqnarray*}
&& c\int_0^t \left[(t-s)^{(2\beta-\alpha)/\alpha} \rho_{0}^{0}(t,x_m-y_m) 
+ s^{(2\beta-\alpha)/\alpha} \rho_{0}^{0}(t,x_m-y_m) \right. \\
&& \left. + (t-s)^{(\beta-\alpha)/\alpha} \rho_{0}^{\beta}(t,x_m-y_m)
+ s^{(\beta-\alpha)/\alpha} \rho_{0}^{\beta}(t,x_m-y_m) \right] \\
&&\times \left[\prod_{\substack{i \in Z(L)\\ i \ne m}} 
\left[t^{(\alpha + \beta)/\alpha} \rho_{0}^{0}(t,x_i-y_i) + t \rho_{0}^{\beta}(t,x_i-y_i)\right]\right]\\
&&\times \left[\prod_{i \notin Z(L)}
t \rho_{0}^{0}(t,x_i-y_i) \right] s^{k\beta/\alpha} \, ds.
\end{eqnarray*}
Using similar reasoning as in (\ref{ZLin}) this is bounded from above by 
\begin{eqnarray*}
&& c t^{d-1} \left[\prod_{i = 1}^d \rho_0^0(t,x_i-y_i)\right]\\
&& \times
\sum_{\substack{r \le |L| - 1\\ r \in \N_0}} \,\, \sum_{\{k_1,\ldots,k_r\} \subset Z(L) \setminus \{m\}} 
t^{(|L| - r - 1)\beta/\alpha}
\prod_{i = 1}^r \left(|x_{k_i} - y_{k_i}|^{\beta} \wedge 1\right)\\
&&\left[\int_0^t (t-s)^{(2\beta - \alpha)/\alpha} s^{k \beta/\alpha} \, ds
+ \int_0^t  s^{(2\beta - \alpha + k \beta)/\alpha} \, ds \right. \\
&& + \int_0^t (t-s)^{(\beta - \alpha)/\alpha} s^{k \beta/\alpha} \, ds \left(|x_{m} - y_{m}|^{\beta} \wedge 1\right)  \\
&& \left. \int_0^t  s^{(\beta - \alpha + k \beta)/\alpha} \, ds \left(|x_{m} - y_{m}|^{\beta} \wedge 1\right) \right] 
\end{eqnarray*}
By (\ref{betain}) it follows that 
\begin{eqnarray*}
&& D(t,x,y,m,k,L) \le c t^{d-1} \left[\prod_{i = 1}^d \rho_0^0(t,x_i-y_i)\right]\\
&& \times
\sum_{\substack{r \le |L| - 1\\ r \in \N_0}} \,\, \sum_{\{k_1,\ldots,k_r\} \subset Z(L) \setminus \{m\}} 
t^{(|L| - r - 1)\beta/\alpha}
\prod_{i = 1}^r \left(|x_{k_i} - y_{k_i}|^{\beta} \wedge 1\right)\\
&& \frac{1}{(k+1)^{\beta/\alpha}} 
\left[ t^{(k \beta + 2 \beta)/\alpha} 
+ t^{(k \beta + \beta)/\alpha} \left(|x_{m} - y_{m}|^{\beta} \wedge 1\right) \right] 
\end{eqnarray*}
Hence
\begin{eqnarray}
\nonumber
&& D(t,x,y,m,k,L) \le \frac{c t^{d-1}}{(k+1)^{\beta/\alpha}} \left[\prod_{i = 1}^d \rho_0^0(t,x_i-y_i)\right]\\
\label{Case3in}
&& \times \sum_{\substack{r \le |L|\\ r \in \N_0}} \sum_{\{k_1,\ldots,k_r\} \subset Z(L)} 
t^{(n+2-r)\beta} \prod_{i = 1}^r \left(|x_{k_i} - y_{k_i}|^{\beta} \wedge 1\right).
\end{eqnarray}

Recall that $n+1 = k+|L|$, $|L| \le d$, so $k \ge n+1-d$. Hence $\frac{1}{(k+1)^{\beta/\alpha}} \le \frac{c}{(n+2)^{\beta/\alpha}}$, where $c = c(d)$. Consequently, (\ref{qn1}), (\ref{Case1in}), (\ref{Case2in}), (\ref{Case3in}) gives that (\ref{qn11}) holds,  which finishes the induction proof. 

From (\ref{qn}) we immediately obtain that for any $n \in \N_0$
\begin{equation}
\label{newqn}
|q_n(t,x,y)| \le
\frac{c C^n}{((n+1)!)^{\beta/\alpha}} t^{d-1} \left[\prod_{i = 1}^d \rho_0^0(t,x_i-y_i)\right]
\left[t^{\beta/\alpha} + \sum_{m = 1}^d \left(|x_m-y_m|^{\beta} \wedge 1\right)\right].
\end{equation}
It follows that $\sum_{n = 0}^{\infty} q_n(t,x,y)$ is absolutely and locally uniformly convergent on $(0,T]\times \R^d \times \R^d$ and (\ref{qest}) holds.

By the properties of $p_y(t,x)$ it is easy to justify that $q_0(t,x,y)$ is jointly continuous in $(t,x,y) \in (0,T] \times \Rd \times \Rd$. By (\ref{defqn}) and induction method the same property holds for $q_n(t,x,y)$ for each $n \in N$. Since  $\sum_{n = 0}^{\infty} q_n(t,x,y)$ is absolutely and locally uniformly convergent we finally obtain that $q(t,x,y)$ is jointly continuous in $(t,x,y) \in (0,T] \times \Rd \times \Rd$.
\end{proof}


By elementary calculations, for any  $t >0$, $u, w \in \R$  satisfying $|u- w| \le t^{1/\alpha}$, we have
\begin{equation}
\label{xxp}
\rho_0^0(t,u) \approx \rho_0^0(t,w).
\end{equation}

\begin{lemma}
\label{productgt1}
There exists $c = c(\alpha,d)$ such that for any $m \in \{1,\ldots,d\}$, $t >0$, $x, x'\in \R^d$   we have
\begin{equation}
\label{productgt2M1}
\left| \prod_{i=1}^m  g_t\left({x_i}\right) - 
\prod_{i=1}^m  g_t\left({x'_i}\right)\right|
\le c  \left(\prod_{i=1}^m g_t(|x_i|\wedge |x'_i|) \right)\left[1\wedge \sum_{j = 1}^m  \frac{|x_j - x'_j|}{t^{1/\alpha}+|x_i|\wedge |x'_i|}\right].
\end{equation}
If additionally $|x - x'| \le t^{1/\alpha}$, then

\begin{equation}
\label{productgt2M2}
\left| \prod_{i=1}^m  g_t\left({x_i}\right) - 
\prod_{i=1}^m  g_t\left({x'_i}\right)\right|
\le c   t^{m} \left(\prod_{i=1}^m \rho_0^0(t,x_i) \right)\left[1\wedge \sum_{j = 1}^m t^{-1/\alpha} |x_j - x'_j|\right].
\end{equation}

\end{lemma}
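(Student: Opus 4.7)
The plan is to reduce everything to a one-dimensional estimate of the form
\begin{equation}
\label{oneDplan}
\bigl|g_t(u) - g_t(v)\bigr| \le c\,|u-v|\,\frac{g_t(|u|\wedge |v|)}{t^{1/\alpha}+|u|\wedge |v|}, \qquad u,v\in\R,\ t>0,
\end{equation}
and then telescope the products. Estimate \eqref{oneDplan} is a standard fact for the symmetric one-dimensional $\alpha$-stable density, but since the segment between $u$ and $v$ may cross $0$ when the signs differ, I would handle it by splitting into two regimes. If $|u-v|\ge t^{1/\alpha}+|u|\wedge |v|$, the RHS of \eqref{oneDplan} is at least $c\,g_t(|u|\wedge |v|)$, so \eqref{oneDplan} is immediate from unimodality of $g_t$ (namely $g_t(w)\le g_t(|u|\wedge|v|)$ for $|w|\ge |u|\wedge|v|$). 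If $|u-v|\le t^{1/\alpha}+|u|\wedge|v|$, I would use the mean value theorem together with the classical bound $|g_t'(\xi)|\le c\,g_t(\xi)/(t^{1/\alpha}+|\xi|)$; in the same-sign case $|\xi|\in[|u|\wedge|v|,\,|u|\vee|v|]$ and unimodality yields the desired supremum, while in the opposite-sign (but close) case one checks that both $|u|,|v|\lesssim t^{1/\alpha}$, on which scale $g_t(\xi)\approx t^{-1/\alpha}$ and $t^{1/\alpha}+|\xi|\approx t^{1/\alpha}$, and the bound follows.

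Once \eqref{oneDplan} is in hand, I would prove \eqref{productgt2M1} via the standard telescoping identity
\begin{equation*}
\prod_{i=1}^m g_t(x_i) - \prod_{i=1}^m g_t(x'_i) = \sum_{j=1}^m \Bigl(\prod_{i<j} g_t(x_i)\Bigr)\bigl[g_t(x_j)-g_t(x'_j)\bigr]\Bigl(\prod_{i>j} g_t(x'_i)\Bigr).
\end{equation*}
Applying \eqref{oneDplan} to the middle factor and using unimodality $g_t(x_i),g_t(x'_i)\le g_t(|x_i|\wedge|x'_i|)$ in the remaining factors gives
\begin{equation*}
\Bigl|\prod_{i=1}^m g_t(x_i)-\prod_{i=1}^m g_t(x'_i)\Bigr|\le c\Bigl(\prod_{i=1}^m g_t(|x_i|\wedge|x'_i|)\Bigr)\sum_{j=1}^m \frac{|x_j-x'_j|}{t^{1/\alpha}+|x_j|\wedge|x'_j|}.
\end{equation*}
For the $1\wedge$ in \eqref{productgt2M1} I would combine this with the trivial bound $|\prod g_t(x_i)-\prod g_t(x'_i)|\le \prod g_t(x_i)+\prod g_t(x'_i)\le 2\prod g_t(|x_i|\wedge|x'_i|)$ (again by unimodality).

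Finally, \eqref{productgt2M2} should follow from \eqref{productgt2M1} by book-keeping. When $|x-x'|\le t^{1/\alpha}$ we have $|x_i-x'_i|\le t^{1/\alpha}$ for every $i$, so $\bigl||x_i|-|x_i|\wedge|x'_i|\bigr|\le t^{1/\alpha}$, and \eqref{xxp} combined with $g_t\approx\rho_\alpha^0=t\,\rho_0^0$ gives
$g_t(|x_i|\wedge|x'_i|)\approx t\,\rho_0^0(t,x_i)$; meanwhile $t^{1/\alpha}+|x_j|\wedge|x'_j|\ge t^{1/\alpha}$, so each summand $|x_j-x'_j|/(t^{1/\alpha}+|x_j|\wedge|x'_j|)$ is bounded by $t^{-1/\alpha}|x_j-x'_j|$. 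Substituting yields \eqref{productgt2M2}.

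The only part that is not completely routine is the opposite-sign case of \eqref{oneDplan}, but splitting on whether $|u-v|$ dominates or is dominated by $t^{1/\alpha}+|u|\wedge|v|$ reduces it to unimodality and the standard derivative estimate for $g_t$. Everything else is telescoping and the convolution/comparability facts already quoted in the paper.
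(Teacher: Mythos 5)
Your proposal is correct and follows essentially the same route as the paper: a one-dimensional increment bound of the form $|g_t(u)-g_t(v)|\le c\,|u-v|\,g_t(|u|\wedge|v|)/(t^{1/\alpha}+|u|\wedge|v|)$, telescoping over the coordinates with unimodality of $g_t$, the trivial bound to insert the $1\wedge$, and then \eqref{xxp} together with $g_t\approx\rho_\alpha^0=t\rho_0^0$ for the second inequality. The only cosmetic differences are that the paper derives the gradient bound $|g_t'(\xi)|\le c\,g_t(\xi)/(t^{1/\alpha}+|\xi|)$ from the identity $g_t'(x)=-4\pi x\,g_t^{(3)}(|x|)$ rather than quoting it as classical, and your explicit case-splitting for the sign-crossing regime actually spells out a step the paper passes over tersely.
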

\begin{proof}
Let $g^{(3)}_t\left(\cdot \right) $ be the radial profile of the transition density of the standard $3$-dimensional $\alpha$-stable isotropic process.
Then it is well known (see e.g. \cite[(11)]{BJ2007}) that 
$$\frac {d g_t(x)}{dx}= - 4\pi x g^{(3)}_t(|x|), \quad x\in \R.$$
By the standard estimates of transition density of the  $\alpha$-stable isotropic process we have  
$$ g^{(3)}_t\left(|x|\right)\le c \frac{g_t(x)}{\left(|x| + t^{1/\alpha}\right)^2},$$
which yields
$$\left|\frac {d g_t(x)}{dx}\right|\le c \frac {|x|}{\left(|x| + t^{1/\alpha}\right)^2}g_t\left(x\right)\le  c \frac {g_t(x)}{|x| + t^{1/\alpha}}, \quad x\in \R.$$

Next, for any $u, w \in \R$, from the above gradient estimate of $g_t$ and the fact that $g_t(u)$ is decreasing in $|u|$,
\begin{equation}
\label{gtdiff1}
\left|g_t\left(u\right) - g_t\left(w\right)\right|
\le c \frac{|u -w|}{|u|\wedge |w| + t^{1/\alpha}} 
g_t\left(|u|\wedge |w|\right).
\end{equation}

Hence, by monotonicity of $g_t(u)$ and (\ref{gtdiff1}),
\begin{eqnarray*}
&&
\left| \prod_{i=1}^{m}  g_t\left({x_i}\right) - 
\prod_{i=1}^{m}  g_t\left({x'_i}\right)\right|\\
&&
\le  \sum_{i=1}^m \left[\left|g_t\left(x_{i}\right) - g_t\left(x'_{i}\right)\right| 
\prod_{j\ne i, 1\le j\le m} g_t\left(|x_{i}|\wedge |x'_{i}|\right) \right]\\
&& \le c \left(\prod_{i=1}^m g_t(|x_i|\wedge |x'_i|) \right) \sum_{j = 1}^m  
\frac{|x_i-x'_i|}{|x_i| \wedge|x'_i| + t^{1/\alpha}}
\end{eqnarray*}

Combining this with the obvious inequality 
$$\left| \prod_{i=1}^{m}  g_t\left({x_i}\right) - 
\prod_{i=1}^{m}  g_t\left({x'_i}\right)\right|\le \prod_{i=1}^m g_t(|x_i|\wedge |x'_i|)$$
we finish the proof  of (\ref{productgt2M1}).

To get the second inequality we apply (\ref{xxp}). \end{proof}

As a direct conclusion of Lemma \ref{productgt1} we get
\begin{corollary}
\label{productgt}
For any $k \in \{1,\ldots,d\}$, $t >0$, $x, x', y \in \R^d$  satisfying $|x - x'| \le t^{1/\alpha}$ we have
\begin{eqnarray*}
&& \left| \prod_{\substack{i=1\\ i \ne k}}^d a_{ii}(y) g_t\left(\frac{x_i}{a_{ii}(y)}\right) - 
\prod_{\substack{i=1\\ i \ne k}}^d a_{ii}(y) g_t\left(\frac{x'_i}{a_{ii}(y)}\right)\right|\\
&& \le c t^{d - 1} \left(\prod_{\substack{i=1\\ i \ne k}}^d \rho_0^0(t,x_i) \right) \sum_{j = 1}^d (t^{-1/\alpha} |x_j - x'_j| \wedge 1).
\end{eqnarray*}
\end{corollary}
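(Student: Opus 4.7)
The plan is to deduce the corollary directly from inequality \eqref{productgt2M2} of Lemma \ref{productgt1} by rescaling the coordinates through the constants $a_{ii}(y)$, and then absorbing all scaling factors using the uniform bounds $b_1\le a_{ii}(y)\le b_2$.

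First I would set $\tilde x_i = x_i/a_{ii}(y)$ and $\tilde x'_i = x'_i/a_{ii}(y)$ for $i\in\{1,\dots,d\}\setminus\{k\}$. Since $a_{ii}(y)\ge b_1$, the hypothesis $|x-x'|\le t^{1/\alpha}$ yields $|\tilde x_i-\tilde x'_i|\le b_1^{-1}|x_i-x'_i|$, so in particular $|\tilde x-\tilde x'|\le b_1^{-1}t^{1/\alpha}$. The elementary estimate \eqref{xxp} underlying \eqref{productgt2M2} remains valid (with a constant depending only on $\alpha,b_1$) whenever $|u-w|\le C t^{1/\alpha}$ for a fixed constant $C$, so Lemma \ref{productgt1} applies to the product $\prod_{i\ne k} g_t(\tilde x_i)$ with $m=d-1$.

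Next I would invoke \eqref{productgt2M2} to obtain
\[
\Bigl|\prod_{i\ne k} g_t(\tilde x_i)-\prod_{i\ne k} g_t(\tilde x'_i)\Bigr|\le c\, t^{d-1}\Bigl(\prod_{i\ne k}\rho_0^0(t,\tilde x_i)\Bigr)\Bigl[1\wedge\sum_{j\ne k} t^{-1/\alpha}|\tilde x_j-\tilde x'_j|\Bigr],
\]
and then multiply through by $\prod_{i\ne k}a_{ii}(y)$, which is bounded above by $b_2^{d-1}$. Using $b_1\le a_{ii}(y)\le b_2$ together with the explicit form $\rho_0^0(t,u)=(t^{1/\alpha}+|u|)^{-1-\alpha}$, one has $\rho_0^0(t,\tilde x_i)\le c\,\rho_0^0(t,x_i)$ and $|\tilde x_j-\tilde x'_j|\le b_1^{-1}|x_j-x'_j|$, with constants depending only on $b_1,b_2,\alpha$. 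Finally, extending the sum from indices $j\ne k$ to all $j\in\{1,\dots,d\}$ only enlarges the right-hand side, producing the claimed bound.

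No serious obstacle is expected; the argument is a routine rescaling from the $\tilde x$-variables (where Lemma \ref{productgt1} applies verbatim) back to the $x$-variables, relying only on the uniform lower and upper bounds on $a_{ii}(y)$. The mildly delicate point is simply noting that the $|x-x'|\le t^{1/\alpha}$ hypothesis survives the rescaling up to the harmless factor $b_1^{-1}$, which is absorbed in the universal constant $c$.
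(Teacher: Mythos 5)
Your argument is correct and is essentially the paper's own route: the corollary is stated there as a direct consequence of Lemma \ref{productgt1}, obtained by applying it to the rescaled variables $x_i/a_{ii}(y)$ and absorbing the factors $a_{ii}(y)\in[b_1,b_2]$ into the constant, exactly as you do. Your attention to the rescaled hypothesis $|\tilde x-\tilde x'|\le b_1^{-1}t^{1/\alpha}$ is well placed and correctly resolved via the obvious extension of \eqref{xxp}; alternatively one could start from the unrestricted bound \eqref{productgt2M1} and apply \eqref{xxp} in the original variables, which avoids even that small point.
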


\begin{corollary}
\label{productgt2}
For any  $t >0$ and $x, y, w \in \R^d$   we have
$$|p_x(t,w)-p_y(t,w)|\le c p_x(t,w) (|x-y|^\beta \wedge 1).$$
\end{corollary}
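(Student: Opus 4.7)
The plan is to compare the two products coordinate by coordinate via a telescoping sum, and to control each single-coordinate difference using a mean value argument on $s \mapsto s^{-1}g_t(w/s)$. Throughout, the key mechanism is that the parameters $a_{ii}(x), a_{ii}(y) \in [b_1, b_2]$ are confined to a compact interval bounded away from zero, so the transition density $g_t$ scaled by any of these values is pointwise comparable to $g_t$ scaled by any other.

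First I would set $\psi_i(s) = s^{-1} g_t(w_i/s)$ so that $p_x(t,w) = \prod_{i=1}^d \psi_i(a_{ii}(x))$. Differentiating gives
\[
\psi_i'(s) = -\frac{1}{s^2} g_t(w_i/s) - \frac{w_i}{s^3} g_t'(w_i/s),
\]
and invoking the bound $|g_t'(u)| \le c\, g_t(u)/(|u|+t^{1/\alpha})$ which was established in the proof of Lemma \ref{productgt1}, I would conclude $|\psi_i'(s)| \le c\, s^{-1} \psi_i(s)$. Since $\psi_i(s) \approx \psi_i(s')$ uniformly for $s,s' \in [b_1,b_2]$ (a direct consequence of the standard two-sided estimate $g_t(u) \approx t(t^{1/\alpha}+|u|)^{-1-\alpha}$ together with $b_1 \le s,s' \le b_2$), the mean value theorem yields
\[
|\psi_i(a) - \psi_i(b)| \le c\,|a-b|\,\psi_i(a), \qquad a,b \in [b_1,b_2].
\]
The same comparability also gives at once that $p_x(t,w) \approx p_y(t,w)$ with constants depending only on $b_1,b_2,d$.

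Next I would telescope the product:
\[
\prod_{i=1}^d \psi_i(a_{ii}(x)) - \prod_{i=1}^d \psi_i(a_{ii}(y)) = \sum_{k=1}^d \Bigl(\prod_{i<k}\psi_i(a_{ii}(x))\Bigr)\bigl(\psi_k(a_{kk}(x))-\psi_k(a_{kk}(y))\bigr)\Bigl(\prod_{i>k}\psi_i(a_{ii}(y))\Bigr).
\]
Applying the single-coordinate estimate to the middle factor and the comparability $\psi_i(a_{ii}(y)) \le c\,\psi_i(a_{ii}(x))$ to the trailing product, each summand is bounded by $c\,|a_{kk}(x)-a_{kk}(y)|\,p_x(t,w)$. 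Summing over $k$ and invoking (\ref{aHolder}) with the uniform bound $|a_{kk}(x)-a_{kk}(y)| \le b_2-b_1$ in tandem gives $|a_{kk}(x)-a_{kk}(y)| \le c(|x-y|^\beta \wedge 1)$, so
\[
|p_x(t,w) - p_y(t,w)| \le c\,d\,(|x-y|^\beta \wedge 1)\,p_x(t,w),
\]
which is the desired bound.

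There is no real obstacle here; the only delicate point is verifying the derivative estimate $|\psi_i'(s)| \le c s^{-1}\psi_i(s)$, and this is a direct consequence of the $g_t'$ bound already derived inside the proof of Lemma \ref{productgt1}, so the argument is essentially routine once that ingredient is noted.
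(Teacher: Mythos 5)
Your proof is correct. The single-coordinate estimate is the heart of it: with $\psi_i(s)=s^{-1}g_t(w_i/s)$ the identity $\psi_i'(s) = -s^{-2}g_t(w_i/s)-\frac{w_i}{s^3}g_t'(w_i/s)$ together with $|g_t'(u)|\le c\,g_t(u)/(|u|+t^{1/\alpha})$ indeed gives $|\psi_i'(s)|\le c\,s^{-1}\psi_i(s)$ (since $|w_i|/(|w_i|/s+t^{1/\alpha})\le s$), and the comparability of $\psi_i(s)$ over $s\in[b_1,b_2]$, which follows from the standard bound (\ref{standard}), turns the mean value theorem into $|\psi_i(a)-\psi_i(b)|\le c|a-b|\psi_i(a)$; the telescoping sum and $|a_{kk}(x)-a_{kk}(y)|\le c(|x-y|^\beta\wedge 1)$ then finish the argument. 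This is a somewhat different organization from the paper's: there the difference $|p_x(t,w)-p_y(t,w)|$ is first split into a term with the difference of the prefactor products $\prod a_{ii}^{-1}$ (controlled directly by (\ref{bounded}) and (\ref{aHolder})) and a term with the difference of the $g_t$-products, the latter being handled by quoting Lemma \ref{productgt1}, i.e.\ by a mean value bound in the \emph{spatial} argument, followed by the observation that $|w_i|\,|a_{ii}(x)-a_{ii}(y)|/(t^{1/\alpha}+|w_i|)\le c(|x-y|^\beta\wedge 1)$. You instead absorb the prefactors into $\psi_i$ and differentiate in the \emph{scale} parameter, so you never need the statement of Lemma \ref{productgt1}, only the gradient bound for $g_t$ established inside its proof; the paper's route is shorter given that the lemma is already available, while yours is self-contained and arguably cleaner in that the two sources of $x$-dependence ($a_{ii}^{-1}$ and the argument of $g_t$) are treated in one stroke. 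Both give the bound with a constant depending only on $d,\alpha,b_1,b_2,b_3$.
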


\begin{proof} We have
\begin{eqnarray*}
|p_x(t,w)-p_y(t,w)|&=&\left| \prod_{i=1}^d a^{-1}_{ii}(y) g_t\left(\frac{w_i}{a_{ii}(y)}\right) -  \prod_{i=1}^d a^{-1}_{ii}(x) g_t\left(\frac{w_i}{a_{ii}(x)}\right)\right| \\&\le&
\left|\prod_{i=1}^d a^{-1}_{ii}(y) -\prod_{i=1}^d a^{-1}_{ii}(x)\right| \prod_{i=1}^d g_t\left(\frac{w_i}{a_{ii}(y)}\right)\\
&+&\prod_{i=1}^d a^{-1}_{ii}(x) \left|\prod_{i=1}^d g_t\left(\frac{w_i}{a_{ii}(y)}\right) -  \prod_{i=1}^d g_t\left(\frac{w_i}{a_{ii}(x)}\right)\right|
\end{eqnarray*}
Next, by (\ref{aHolder}) and (\ref{bounded}),
$$\left|\prod_{i=1}^d a^{-1}_{ii}(y) -\prod_{i=1}^d a^{-1}_{ii}(x)\right|\le c(|x-y|^\beta \wedge 1), $$
and by Lemma \ref{productgt1} together with (\ref{aHolder}) and (\ref{bounded})

\begin{eqnarray*}&& \left|\prod_{i=1}^d g_t\left(\frac{w_i}{a_{ii}(y)}\right) -  \prod_{i=1}^d g_t\left(\frac{w_i}{a_{ii}(x)}\right)\right|\\
&\le &
c \left[1\wedge \sum_{j = 1}^m  \frac{|w_i||a_{ii}(x)-a_{ii}(y)|}{t^{1/\alpha}+|w_i|}\right]\prod_{i=1}^d g_t\left(\frac{w_i}{a_{ii}(x)}\right)\\
&\le & c (|x-y|^\beta \wedge 1) \prod_{i=1}^d g_t\left(\frac{w_i}{a_{ii}(x)}\right).\end{eqnarray*}
The proof is completed. 
\end{proof}

\begin{corollary}
\label{mintegrallemma}
For any $x \in \Rd$, $t > 0$ we have
\begin{equation}
\label{mintegral}
\int_{\R^{d-1}} \left|\prod_{i=2}^d \frac{1}{a_{ii}(y)}g_t\left(\frac{x_i-y_i}{a_{ii}(y)}\right)
-\prod_{i=2}^d \frac{1}{a_{ii}(\tilde{y})}g_t\left(\frac{x_i-y_i}{a_{ii}(\tilde{y})}\right)\right|
\, dy_2 \, \ldots \, dy_d \le c (|x_1-y_1|^{\beta} \wedge 1),
\end{equation}
where $\tilde{y} = (x_1,y_2,\ldots,y_d)$.
\end{corollary}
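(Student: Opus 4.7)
The plan is to establish the corollary by first obtaining a pointwise estimate for the integrand and then integrating. The key observation is that the vectors $y=(y_1,y_2,\ldots,y_d)$ and $\tilde{y}=(x_1,y_2,\ldots,y_d)$ differ only in their first coordinate, so $|y-\tilde{y}|=|x_1-y_1|$. By \eqref{aHolder} combined with \eqref{bounded}, this immediately yields $|a_{ii}(y)-a_{ii}(\tilde{y})|\le c(|x_1-y_1|^{\beta}\wedge 1)$ for each $i\in\{2,\ldots,d\}$.

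I would follow the splitting used in the proof of Corollary \ref{productgt2}, writing the integrand as the sum of two terms: one in which only the scalar prefactors $\prod a_{ii}^{-1}$ are perturbed, and one in which only the arguments of the $g_t$'s are perturbed. The first term is bounded by $c(|x_1-y_1|^{\beta}\wedge 1)\prod_{i=2}^{d} g_t((x_i-y_i)/a_{ii}(y))$ using the Hölder/boundedness estimate above. For the second term, I would apply \eqref{productgt2M1} of Lemma \ref{productgt1} with $\xi_i=(x_i-y_i)/a_{ii}(y)$ and $\xi'_i=(x_i-y_i)/a_{ii}(\tilde{y})$. The coordinate differences satisfy $|\xi_i-\xi'_i|\le c|x_i-y_i|(|x_1-y_1|^{\beta}\wedge 1)$, while $t^{1/\alpha}+|\xi_i|\wedge|\xi'_i|\ge c(t^{1/\alpha}+|x_i-y_i|)$ by \eqref{bounded}, so each summand in the sum appearing in \eqref{productgt2M1} is bounded by $c(|x_1-y_1|^{\beta}\wedge 1)$.

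Next I would exploit the scaling comparability $\frac{1}{a_{ii}(y)}g_t\bigl((x_i-y_i)/a_{ii}(y)\bigr)\approx g_t(x_i-y_i)$ uniformly in $y$, which follows at once from \eqref{standard} together with \eqref{bounded}, to reduce both contributions to a single pointwise bound of the form
$$\Bigl|\prod_{i=2}^{d}\tfrac{1}{a_{ii}(y)}g_t\Bigl(\tfrac{x_i-y_i}{a_{ii}(y)}\Bigr)-\prod_{i=2}^{d}\tfrac{1}{a_{ii}(\tilde{y})}g_t\Bigl(\tfrac{x_i-y_i}{a_{ii}(\tilde{y})}\Bigr)\Bigr|\le c(|x_1-y_1|^{\beta}\wedge 1)\prod_{i=2}^{d}g_t(x_i-y_i).$$
The conclusion then follows by integrating over $(y_2,\ldots,y_d)\in\R^{d-1}$ and using $\int_{\R}g_t(u)\,du=1$, which eliminates all dependence on the variables $x_i,y_i$ for $i\ge 2$ and leaves exactly the bound $c(|x_1-y_1|^{\beta}\wedge 1)$.

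The main obstacle in this plan is the bookkeeping for \eqref{productgt2M1}: one must verify that even after the coordinates are rescaled by the bounded factors $a_{ii}$, the ratios $|\xi_j-\xi'_j|/(t^{1/\alpha}+|\xi_j|\wedge|\xi'_j|)$ remain comparable to an expression whose numerator is the desired $c(|x_1-y_1|^{\beta}\wedge 1)$-factor times $|x_j-y_j|$, divided by the ``unscaled'' denominator $t^{1/\alpha}+|x_j-y_j|$. This is precisely where the two-sided bound \eqref{bounded} on $a_{ii}$ is essential. Beyond this comparability, the argument is routine: once the pointwise estimate is in place, the final integration is trivial since each $g_t$ is a probability density.
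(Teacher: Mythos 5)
Your proposal is correct and follows essentially the same route as the paper: the paper's proof simply invokes the argument of Corollary \ref{productgt2} (the same prefactor/argument splitting you carry out, via Lemma \ref{productgt1} and the bounds (\ref{bounded}), (\ref{aHolder})) to get the pointwise bound $c(|x_1-y_1|^{\beta}\wedge 1)\prod_{i=2}^d g_t(x_i-y_i)$, noting $|y-\tilde{y}|=|x_1-y_1|$, and then integrates in $y_2,\ldots,y_d$. Your write-up just makes explicit the rescaling comparabilities that the paper leaves implicit.
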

\begin{proof}
By the same arguments as in the proof Corollary \ref{productgt2} we have

$$\left|\prod_{i=2}^d \frac{1}{a_{ii}(y)}g_t\left(\frac{x_i-y_i}{a_{ii}(y)}\right)
-\prod_{i=2}^d \frac{1}{a_{ii}(\tilde{y})}g_t\left(\frac{x_i-y_i}{a_{ii}(\tilde{y})}\right)\right|
\le c \left[\prod_{i=2}^d g_t(x_i-y_i)\right](|y-\tilde{y}|^\beta \wedge 1).$$
Observing that $|y-\tilde{y}|= |x_1-y_1| $ we obtain the conclusion by integration. 
\end{proof}

\begin{lemma}
\label{generatorpy2}
For any $t \in (0,T]$, $x, x', y \in \R^d$  satisfying $|x - x'| \le t^{1/\alpha}$ we have
\begin{eqnarray}
\nonumber
&& \sum_{k = 1}^d \int_{\R} \left|\delta_{p_y}(t,x,z_k e_k) - \delta_{p_y}(t,x',z_k e_k)\right| \frac{d z_k}{|z_k|^{1 + \alpha}}\\
\label{gen2}
&& \le c t^{d - 1} \left(\prod_{i = 1}^d \rho_0^0(t,x_i) \right) \sum_{j = 1}^d (t^{-1/\alpha} |x_j - x'_j| \wedge 1).
\end{eqnarray}
\end{lemma}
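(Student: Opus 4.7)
The plan is to reduce each $k$-th integral to a one-dimensional estimate by a coordinate factorization. For each $k \in \{1,\ldots,d\}$, write $p_y(t,x) = P_k(x)\, h_k(x_k)$, where
$$P_k(x) = \prod_{i\ne k} \frac{1}{a_{ii}(y)}\, g_t\!\Bigl(\frac{x_i}{a_{ii}(y)}\Bigr), \qquad h_k(u) = \frac{1}{a_{kk}(y)}\, g_t\!\Bigl(\frac{u}{a_{kk}(y)}\Bigr).$$
Since $\delta_{p_y}(t,\cdot,z_k e_k)$ varies only in the $k$-th coordinate, $\delta_{p_y}(t,x,z_k e_k) = P_k(x)\,\delta_{h_k}(x_k, z_k)$, so I split
$$\delta_{p_y}(t,x,z_k e_k) - \delta_{p_y}(t,x',z_k e_k) = \bigl[P_k(x) - P_k(x')\bigr]\delta_{h_k}(x_k,z_k) + P_k(x')\bigl[\delta_{h_k}(x_k,z_k) - \delta_{h_k}(x'_k,z_k)\bigr].$$

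For the first summand, Corollary \ref{productgt} controls $|P_k(x) - P_k(x')|$ by $c\,t^{d-1}\prod_{i\ne k}\rho_0^0(t,x_i)\sum_{j=1}^d(t^{-1/\alpha}|x_j-x'_j|\wedge 1)$, while $\int_\R |\delta_{h_k}(x_k,z_k)|\,|z_k|^{-1-\alpha}dz_k \le c\,\rho_0^0(t,x_k)$ follows from the one-coordinate case of Lemma \ref{generatorpy1} (by the standard scaling of $g_t$). Multiplying these gives the required bound on the first summand.

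The main obstacle is the second summand, which requires the one-dimensional H\"older-type estimate
$$\int_\R \bigl|\delta_{h_k}(u,z) - \delta_{h_k}(u',z)\bigr|\,\frac{dz}{|z|^{1+\alpha}} \le c\, t^{-1/\alpha}|u-u'|\,\rho_0^0(t,u), \qquad |u-u'|\le t^{1/\alpha}.$$
My plan is to derive this from the fundamental theorem of calculus: since $\partial_u \delta_{h_k}(u,z) = \delta_{h'_k}(u,z)$, we have $\delta_{h_k}(u,z) - \delta_{h_k}(u',z) = (u-u')\int_0^1 \delta_{h'_k}(u'+\theta(u-u'),z)\,d\theta$, reducing the problem to the uniform bound $\int_\R |\delta_{h'_k}(\xi,z)|\,|z|^{-1-\alpha}dz \le c\,t^{-1/\alpha}\rho_0^0(t,\xi)$ along the segment. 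This is a routine scaling computation: substituting $z = a_{kk}(y)\,t^{1/\alpha}w$ factors out the $t$-dependence, and one is left with $\int_\R |\delta_{g'_1}(\tilde\xi,w)|\,|w|^{-1-\alpha}dw \le c(1+|\tilde\xi|)^{-2-\alpha}$ (a standard gradient estimate for the generator applied to the one-dimensional stable density), which after unscaling gives the factor $(t^{1/\alpha}+|\xi|)^{-2-\alpha} \le t^{-1/\alpha}\rho_0^0(t,\xi)$.

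Finally, using $|P_k(x')| \le c\,t^{d-1}\prod_{i\ne k}\rho_0^0(t,x'_i) \approx c\,t^{d-1}\prod_{i\ne k}\rho_0^0(t,x_i)$ by \eqref{xxp} and $|x-x'|\le t^{1/\alpha}$, and $\rho_0^0(t,\xi)\approx \rho_0^0(t,x_k)$ along the segment, the second summand is bounded by $c\,t^{d-1}\prod_i \rho_0^0(t,x_i)\,(t^{-1/\alpha}|x_k-x'_k|\wedge 1)$; here the $\wedge 1$ is automatic since $|x_k-x'_k|\le |x-x'|\le t^{1/\alpha}$. Summing the two contributions over $k\in\{1,\ldots,d\}$ gives \eqref{gen2}.
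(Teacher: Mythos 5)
Your decomposition is exactly the one used in the paper: factor out the coordinates $i\ne k$, write the difference as $[P_k(x)-P_k(x')]\delta_{h_k}(x_k,\cdot)+P_k(x')[\delta_{h_k}(x_k,\cdot)-\delta_{h_k}(x'_k,\cdot)]$, control the first factor by Corollary \ref{productgt} and the one-dimensional integrals by stable-density estimates. The only real difference is that the paper simply quotes \cite[(2.28)]{CZ} and \cite[(2.29)]{CZ} for the two one-dimensional bounds, whereas you re-derive the H\"older-type bound (the analogue of \cite[(2.29)]{CZ}) by writing $\delta_{h_k}(u,z)-\delta_{h_k}(u',z)=(u-u')\int_0^1\delta_{h'_k}(u'+\theta(u-u'),z)\,d\theta$ and scaling; that is a legitimate, self-contained route and the use of (\ref{xxp}) along the segment (valid since $|x_k-x'_k|\le t^{1/\alpha}$) is correct.

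One quantitative slip in your scale-one estimate: the bound $\int_{\R}|\delta_{g'_1}(\tilde\xi,w)|\,|w|^{-1-\alpha}dw\le c(1+|\tilde\xi|)^{-2-\alpha}$ is not true. For large $|\tilde\xi|$ the region $|w|\approx|\tilde\xi|$ (where $\tilde\xi\mp w$ is of order one and $|g'_1|$ carries fixed mass) contributes about $|\tilde\xi|^{-1-\alpha}\int_{|v|\le 1}|g'_1(v)|\,dv$, so the correct decay is only $c(1+|\tilde\xi|)^{-1-\alpha}$. This does not damage your argument: after undoing the scaling $z=a_{kk}(y)t^{1/\alpha}w$ the weaker (correct) bound yields precisely $\int_{\R}|\delta_{h'_k}(\xi,z)|\,|z|^{-1-\alpha}dz\le c\,t^{-1/\alpha}(t^{1/\alpha}+|\xi|)^{-1-\alpha}=c\,t^{-1/\alpha}\rho_0^0(t,\xi)$, which is all you use. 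With that exponent corrected, your proof is complete and matches the paper's in structure, differing only in proving rather than citing the one-dimensional ingredient.
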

\begin{proof}
Fix $k \in \{1,\ldots,d\}$
For $t > 0$, $z \in \R$ put
$$
h_y(t,z) = a_{kk}(y) g_t\left(\frac{z}{a_{kk}(y)}\right).
$$

We have
\begin{eqnarray*}
&&\int_{\R} \left|\delta_{p_y}(t,x,z_k e_k) - \delta_{p_y}(t,x',z_k e_k)\right| \frac{d z_k}{|z_k|^{1 + \alpha}}\\
&& = \int_{\R} \left| \left[\prod_{\substack{i=1\\ i \ne k}}^d a_{ii}(y) g_t\left(\frac{x_i}{a_{ii}(y)}\right) \right] \delta_{h_y}(t,x_k,z_k) \right. \\
&& \left.
- \left[\prod_{\substack{i=1\\ i \ne k}}^d a_{ii}(y) g_t\left(\frac{x'_i}{a_{ii}(y)}\right) \right] \delta_{h_y}(t,x'_k,z_k)\right|
 \frac{d z_k}{|z_k|^{1 + \alpha}}\\
&& \le 
\left| \prod_{\substack{i=1\\ i \ne k}}^d a_{ii}(y) g_t\left(\frac{x_i}{a_{ii}(y)}\right) - 
\prod_{\substack{i=1\\ i \ne k}}^d a_{ii}(y) g_t\left(\frac{x'_i}{a_{ii}(y)}\right)\right|
\int_{\R} \left| \delta_{h_y}(t,x_k,z_k) \right| \frac{d z_k}{|z_k|^{1 + \alpha}}\\
&& + \left[ \prod_{\substack{i=1\\ i \ne k}}^d a_{ii}(y) g_t\left(\frac{x'_i}{a_{ii}(y)}\right) \right]
\int_{\R} \left| \delta_{h_y}(t,x_k,z_k) - \delta_{h_y}(t,x'_k,z_k)\right| \frac{d z_k}{|z_k|^{1 + \alpha}}.
\end{eqnarray*}

By \cite[(2.28)]{CZ} we have
$$
\int_{\R} \left| \delta_{h_y}(t,x_k,z_k) \right| \frac{d z_k}{|z_k|^{1 + \alpha}} \le c \rho_0^0(t,x_k),
$$
while, by \cite[(2.29)]{CZ},

$$\int_{\R} \left| \delta_{h_y}(t,x_k,z_k) - \delta_{h_y}(t,x'_k,z_k)\right| \frac{d z_k}{|z_k|^{1 + \alpha}}\le c \rho_0^0(t,x_k)(t^{-1/\alpha} |x_k - x'_k| \wedge 1).$$
Applying the above inequalities  and Corollary \ref{productgt} we obtain the desired bound (\ref{gen2}).
\end{proof}

\begin{lemma}
\label{q0Holder}
For any $x, x', y \in \R^d$, $t \in (0,T]$ and $\gamma \in (0,\beta)$ we have
\begin{eqnarray*}
&& |q_0(t,x,y) - q_0(t,x',y)| \le c \left( |x - x'|^{\beta - \gamma} \wedge 1\right)\\
&& \times \left[ \sum_{k = 1}^d \left[\prod_{\substack{i=1\\ i \ne k}}^d (t \rho_0^0(t,x_i-y_i))\right] 
\left(\rho_{\gamma}^0(t,x_k-y_k) + \rho_{\gamma - \beta}^{\beta}(t,x_k-y_k)\right) \right. \\
&& + \left. \sum_{k = 1}^d \left[\prod_{\substack{i=1\\ i \ne k}}^d (t \rho_0^0(t,x'_i-y_i))\right] 
\left(\rho_{\gamma}^0(t,x'_k-y_k) + \rho_{\gamma - \beta}^{\beta}(t,x'_k-y_k)\right) \right]
\end{eqnarray*}
\end{lemma}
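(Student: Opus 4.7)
The natural decomposition is
$$q_0(t,x,y)-q_0(t,x',y) = I_1+I_2,$$
with
$$I_1 = \frac{\calA_\alpha}{2}\sum_{k=1}^d \int_\R \delta_{p_y}(t,x-y,e_kz_k)(\s_k(x)-\s_k(x'))\,\frac{dz_k}{|z_k|^{1+\alpha}},$$
$$I_2 = \frac{\calA_\alpha}{2}\sum_{k=1}^d \int_\R \bigl[\delta_{p_y}(t,x-y,e_kz_k)-\delta_{p_y}(t,x'-y,e_kz_k)\bigr](\s_k(x')-\s_k(y))\,\frac{dz_k}{|z_k|^{1+\alpha}}.$$
This separates the H\"older dependence on $x$ arising from the coefficients $\s_k$ (in $I_1$) from the one arising from the frozen heat kernel $p_y$ (in $I_2$).

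I would then split into two regimes according to the ratio $|x-x'|/t^{1/\alpha}$. In the \textbf{large-displacement} regime $|x-x'|\ge t^{1/\alpha}$, I bound $|q_0(t,x,y)-q_0(t,x',y)|$ by $|q_0(t,x,y)|+|q_0(t,x',y)|$ and apply (\ref{q0}). For each $m$ I rewrite
$$t^{d-1}\bigl(|x_m-y_m|^\beta\wedge 1\bigr)\prod_i\rho_0^0(t,x_i-y_i) = t^{(\beta-\gamma)/\alpha}\biggl[\prod_{i\ne m}t\rho_0^0(t,x_i-y_i)\biggr]\rho_{\gamma-\beta}^{\beta}(t,x_m-y_m),$$
and absorb $t^{(\beta-\gamma)/\alpha}$ into $|x-x'|^{\beta-\gamma}\wedge 1$ using both $t^{1/\alpha}\le |x-x'|$ and $t\le T$.

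In the complementary regime $|x-x'|\le t^{1/\alpha}$, I treat the two terms separately. For $I_1$ I combine (\ref{sigmaHolder}) with Lemma \ref{generatorpy1} to obtain $|I_1|\le c(|x-x'|^\beta\wedge 1)\,t^{d-1}\prod_i\rho_0^0(t,x_i-y_i)$, and then the interpolation $(|x-x'|^\beta\wedge 1)\le c\,t^{\gamma/\alpha}(|x-x'|^{\beta-\gamma}\wedge 1)$ (valid since $|x-x'|\le T^{1/\alpha}$) reshapes it into the desired form containing $\rho_\gamma^0(t,x_k-y_k)$. For $I_2$, (\ref{sigmaHolder}) and Lemma \ref{generatorpy2} give
$$|I_2|\le c\sum_m(|x'_m-y_m|^\beta\wedge 1)\,t^{d-1}\biggl[\prod_i\rho_0^0(t,x_i-y_i)\biggr]\sum_j(t^{-1/\alpha}|x_j-x'_j|\wedge 1).$$
Since each entry of the last sum is bounded by $1$ and $\beta-\gamma\in(0,1)$, one has $\sum_j(t^{-1/\alpha}|x_j-x'_j|\wedge 1)\le c(t^{-1/\alpha}|x-x'|)^{\beta-\gamma}$, producing the factor $t^{(\gamma-\beta)/\alpha}|x-x'|^{\beta-\gamma}$ and, after using (\ref{xxp}) to swap $x$ for $x'$ inside the $\rho_0^0$ factors, the desired profile $\rho_{\gamma-\beta}^\beta(t,x'_m-y_m)$.

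The main obstacle is the precise bookkeeping: the right-hand side of the claim mixes two different profiles $\rho_\gamma^0$ and $\rho_{\gamma-\beta}^\beta$, each evaluated once at $x-y$ and once at $x'-y$, and the proof must decide which of these to target for every raw piece, convert $(\cdot\wedge 1)$ factors by interpolating between the exponents $0$ and $\beta$, and judge whether the regime justifies absorbing $t^{1/\alpha}$ into $|x-x'|$ or, conversely, trading $x$ for $x'$ via (\ref{xxp}).
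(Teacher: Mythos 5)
Your proof is correct and follows essentially the same route as the paper: triangle inequality plus the bound (\ref{q0}) when $|x-x'|\ge t^{1/\alpha}$, and otherwise a product-rule split handled by (\ref{sigmaHolder}) together with Lemmas \ref{generatorpy1} and \ref{generatorpy2}, with the same interpolation of the wedge factors. The only (harmless) difference is that you attach the coefficient increment $\s_k(x)-\s_k(x')$ to $\delta_{p_y}(t,x-y,\cdot)$ rather than to $\delta_{p_y}(t,x'-y,\cdot)$, which is why you need the extra appeal to (\ref{xxp}) that the paper's symmetric grouping avoids.
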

\begin{proof}
\textbf{Case 1.} $|x-x'| > t^{1/\alpha}$.

By (\ref{q0}) we get, for $z=x\ \text{or} \ x'$,
$$
|q_0(t,z,y)| 
\le c \left(|x-x'|^{\beta - \gamma} \wedge 1\right) t^{d-1} \left(\prod_{i=1}^d \rho_0^0(t,z_i-y_i)\right) \left(\sum_{m=1}^d \left(|z_m - y_m| \wedge 1\right)^{\beta} t^{\frac{\gamma - \beta}{\alpha}}\right).
$$

\textbf{Case 2.} $|x - x'| \le t^{1/\alpha}$.

We have
\begin{eqnarray*}
&& |q_0(t,x,y) - q_0(t,x',y)| \\
&& = \frac{\calA}{2} \left| \sum_{k = 1}^d \int_{\R} \delta_{p_{y}}(t,x-y,z_k e_k) (a_{kk}^{\alpha}(x) - a_{kk}^{\alpha}(y)) \frac{d z_k}{|z_k|^{1 + \alpha}}\right. \\
&& - \left. \sum_{k = 1}^d \int_{\R} \delta_{p_{y}}(t,x'-y,z_k e_k) (a_{kk}^{\alpha}(x') - a_{kk}^{\alpha}(y)) \frac{d z_k}{|z_k|^{1 + \alpha}}\right| \\
&& \le \sum_{k = 1}^d \int_{\R} \left| \delta_{p_{y}}(t,x-y,z_k e_k) - \delta_{p_{y}}(t,x'-y,z_k e_k) \right| 
\left|a_{kk}^{\alpha}(x) - a_{kk}^{\alpha}(y)\right| \frac{d z_k}{|z_k|^{1 + \alpha}} \\
&& + \sum_{k = 1}^d \int_{\R} \left| \delta_{p_{y}}(t,x'-y,z_k e_k) \right| 
\left|a_{kk}^{\alpha}(x) - a_{kk}^{\alpha}(x')\right| \frac{d z_k}{|z_k|^{1 + \alpha}} \\
&& \le c \left(|x-y|^{\beta} \wedge 1\right) \sum_{k = 1}^d \int_{\R} 
\left| \delta_{p_{y}}(t,x-y,z_k e_k) - \delta_{p_{y}}(t,x'-y,z_k e_k) \right|  \frac{d z_k}{|z_k|^{1 + \alpha}} \\
&& + c \left(|x-x'|^{\beta} \wedge 1\right) \sum_{k = 1}^d \int_{\R} \left| \delta_{p_{y}}(t,x'-y,z_k e_k) \right| 
 \frac{d z_k}{|z_k|^{1 + \alpha}}\\
&& = \text{I} + \text{II}.
\end{eqnarray*}

By Lemma \ref{generatorpy2} we get
\begin{equation}
\label{Ifor}
\text{I} \le c \left(|x-y|^{\beta} \wedge 1\right) \left(t^{-1/\alpha} |x-x'| \wedge 1\right) t^{d-1} \prod_{i=1}^d \rho_0^0(t,x_i-y_i).
\end{equation}
Note that $t^{-1/\alpha} |x-x'| \le 1$ so $t^{-1/\alpha} |x-x'| \le t^{\frac{\gamma - \beta}{\alpha}} |x - x'|^{\beta - \gamma}$. Let $m \in \{1,\ldots,d\}$ be such that $|x_m - y_m| = \max_{i \in \{1,\ldots,d\}} |x_i - y_i|$. It follows that (\ref{Ifor}) is bounded from above by 

\begin{eqnarray*}
&& c t^{\frac{\gamma - \beta}{\alpha}} |x-x'|^{\beta - \gamma} t^{d-1} \left(\prod_{i=1}^d \rho_0^0(t,x_i-y_i)\right) 
\left(|x_m - y_m|^{\beta} \wedge 1\right)\\
&& \le c \left(|x-x'|^{\beta - \gamma} \wedge 1\right)  \left(\prod_{\substack{i=1\\ i \ne m}}^d t\rho_0^0(t,x_i-y_i)\right) \rho_{\gamma - \beta}^{\beta}(t,x_m-y_m).
\end{eqnarray*}

We have $|x-x'| \le t^{1/\alpha}$ so $1 \le |x-x'|^{-\gamma} t^{\gamma/\alpha}$. It follows that $\left(|x-x'|^{\beta} \wedge 1\right) \le \left(|x-x'|^{\beta - \gamma} \wedge 1\right) t^{\gamma/\alpha}$. Using this and Lemma \ref{generatorpy1} we get
\begin{eqnarray*}
\text{II} &\le& c \left(|x-x'|^{\beta} \wedge 1\right) t^{d-1} \prod_{i=1}^d \rho_0^0(t,x'_i-y_i)\\
&\le& c \left(|x-x'|^{\beta - \gamma} \wedge 1\right) t^{d-1} \sum_{k = 1}^d \left(\prod_{\substack{i=1\\ i \ne k}}^d \rho_0^0(t,x'_i-y_i)\right) 
\rho_{\gamma}^{0}(t,x'_k-y_k).
\end{eqnarray*}
\end{proof}

\begin{theorem}
\label{Holder}
For any $x, x', y \in \R^d$, $t \in (0,T]$ and $\gamma \in (0,\beta)$ we have
\begin{eqnarray}
\nonumber
&& |q(t,x,y) - q(t,x',y)| \le c \left( |x - x'|^{\beta - \gamma} \wedge 1\right)\\
\nonumber
&& \times \left[ \sum_{k = 1}^d \left[\prod_{\substack{i=1\\ i \ne k}}^d (t \rho_0^0(t,x_i-y_i))\right] 
\left(\rho_{\gamma}^0(t,x_k-y_k) + \rho_{\gamma - \beta}^{\beta}(t,x_k-y_k)\right) \right. \\
\label{Holderqin3}
&& + \left. \sum_{k = 1}^d \left[\prod_{\substack{i=1\\ i \ne k}}^d (t \rho_0^0(t,x'_i-y_i))\right] 
\left(\rho_{\gamma}^0(t,x'_k-y_k) + \rho_{\gamma - \beta}^{\beta}(t,x'_k-y_k)\right) \right].
\end{eqnarray}

\end{theorem}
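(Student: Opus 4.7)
The strategy is to bootstrap from Lemma \ref{q0Holder} via the recursive identity
\begin{equation*}
q(t,x,y) = q_0(t,x,y) + \int_0^t \int_{\R^d} q_0(t-s,x,z)\, q(s,z,y) \, dz \, ds,
\end{equation*}
which follows at once from the definition (\ref{defqn}) of $q_n$ together with the absolute convergence of $\sum_n q_n$ established in Theorem \ref{thmq}. Taking the $x$-difference yields
\begin{align*}
q(t,x,y) - q(t,x',y) &= \bigl[q_0(t,x,y) - q_0(t,x',y)\bigr]\\
&\quad + \int_0^t \!\!\int_{\R^d} \bigl[q_0(t-s,x,z) - q_0(t-s,x',z)\bigr]\, q(s,z,y) \, dz \, ds.
\end{align*}
The first summand is already of the required form by Lemma \ref{q0Holder}, so only the double integral requires work.

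For that integral, I would first apply Lemma \ref{q0Holder} to factor out $(|x-x'|^{\beta-\gamma} \wedge 1)$, replacing the integrand by a sum indexed by a ``distinguished coordinate'' $k_1$ of products of one-dimensional $\rho$-functions in $(t-s, x_i - z_i)$, together with the symmetric $x'$-sum. In parallel, using the identity $t\,\rho_0^0(t,\cdot) = \rho_\alpha^0(t,\cdot)$, the estimate of Theorem \ref{thmq} rewrites as
\begin{equation*}
|q(s,z,y)| \le c \sum_{k_2=1}^d \Bigl(\prod_{i \ne k_2} \rho_\alpha^0(s, z_i - y_i)\Bigr) \bigl(\rho_\beta^0 + \rho_0^\beta\bigr)(s, z_{k_2} - y_{k_2}).
\end{equation*}
The inner $z$-integral then factors coordinate-wise into one-dimensional convolutions, each controllable by Lemma \ref{conv}(ii).

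Integrating over $s \in (0,t)$ via Lemma \ref{conv}(iii), the coordinates $i \notin \{k_1,k_2\}$ produce factors $\rho_\alpha^0(t, x_i - y_i) = t\,\rho_0^0(t, x_i - y_i)$, while the distinguished coordinate(s) yield terms of the form $\rho_\gamma^0(t, x_k - y_k)$, $\rho_{\gamma-\beta}^\beta(t, x_k - y_k)$, and $\rho_\gamma^\beta(t, x_k - y_k)$; the last is absorbed using $\rho_\gamma^\beta \le \rho_\gamma^0$. Both subcases $k_1 = k_2$ and $k_1 \ne k_2$ yield bounds of the form occurring in (\ref{Holderqin3}): in the latter case one of the two ``distinguished'' factors is simply merged into the product $\prod_{i \ne k} \rho_\alpha^0$ at the cost of an allowed power of $t$. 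Summing over $k_1, k_2$ and combining with the analogous contribution from the $x'$-side closes the proof.

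The main obstacle is the bookkeeping: one must enumerate the cases $k_1 = k_2$ versus $k_1 \ne k_2$, pair every factor from the $q_0$-difference ($\rho_\gamma^0$ or $\rho_{\gamma-\beta}^\beta$) with every factor from $|q|$ ($\rho_\beta^0$ or $\rho_0^\beta$), and verify the positivity conditions $\gamma_j + \beta_j > 0$ of Lemma \ref{conv}(iii) in each case. This is precisely where the restriction $\gamma < \beta$ is used, since $(\gamma-\beta) + \beta = \gamma > 0$. The resulting case analysis is analogous in spirit to the three-case split appearing in the proof of Theorem \ref{thmq}, but somewhat shorter since no induction on $n$ is required here.
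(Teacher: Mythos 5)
Your proposal is correct and takes essentially the same route as the paper: Lemma \ref{q0Holder} for the $q_0$-difference, the bound of Theorem \ref{thmq} (equivalently (\ref{newqn})) for the second factor, coordinate-wise factorization, and the convolution estimates of Lemma \ref{conv} with exactly the case split $k_1=k_2$ versus $k_1\ne k_2$ that the paper carries out through the terms $B_k$, $D_{k,m}$, $E_{k,m}$, $F_{k,m}$, $G_{k,m}$. The only (harmless) organizational difference is that the paper bounds each difference $|q_n(t,x,y)-q_n(t,x',y)|$ via the estimate on $q_{n-1}$ and then sums the factorially convergent series, whereas you invoke the Volterra identity $q=q_0+q_0\ast q$ (the paper's (\ref{qself})) and use the bound on $q$ itself.
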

\begin{proof}
By the definition of $q_n$, (\ref{newqn}) and Lemma \ref{q0Holder} we get for $n \in \N$
\begin{eqnarray}
\nonumber
&& |q_n(t,x,y) - q_n(t,x',y)| \\
\nonumber
&& \le \int_0^t \int_{\R^d} |q_0(t-s,x,z) - q_0(t-s,x',z)||q_{n-1}(s,z,y)| \, dz \, ds\\
\label{qn1a}
&&\le \frac{c C^{n-1}}{(n!)^{\beta/\alpha}} \left(|x-x'|^{\beta - \gamma} \wedge 1\right) \left(A(t,x,y) + A(t,x',y)\right),
\end{eqnarray}
where $C$ is the constant from (\ref{newqn}) and
\begin{eqnarray*}
&& A(t,x,y) \\
&& = \int_0^t \int_{\R^d} \sum_{k = 1}^d \left[\prod_{\substack{i=1\\ i \ne k}}^d  \rho_{\alpha}^0(t-s,x_i-z_i)\right] 
\left(\rho_{\gamma}^0(t-s,x_k-z_k) + \rho_{\gamma - \beta}^{\beta}(t-s,x_k-z_k)\right)\\
&& \times \sum_{m = 1}^d \left[\prod_{\substack{j=1\\ j \ne m}}^d \rho_{\alpha}^0(s,z_j-y_j)\right] 
\left(\rho_{\beta}^0(s,z_m-y_m) + \rho_{0}^{\beta}(s,z_m-y_m)\right) 
\, dz_1 \ldots dz_d \, ds.
\end{eqnarray*}

We have 
\begin{eqnarray}
\nonumber
A(t,x,y) &=& \sum_{k = 1}^d \left[\prod_{\substack{i=1\\ i \ne k}}^d t \rho_0^0(t,x_i-y_i)\right] B_k(t,x,y)\\
\nonumber
&& + \sum_{k = 1}^d \sum_{\substack{m=1\\ m \ne k}}^d \left[\prod_{\substack{i = 1\\ i \ne k, \, i \ne m}}^d t \rho_0^0(t,x_i-y_i)\right] \\
\label{Afor2}
&& \times [D_{k,m}(t,x,y) + E_{k,m}(t,x,y) + F_{k,m}(t,x,y) + G_{k,m}(t,x,y)],
\end{eqnarray}
where
\begin{eqnarray*}
B_k(t,x,y) &=&
\int_0^t \int_{\R} 
\left(\rho_{\gamma}^0(t-s,x_k-z_k) + \rho_{\gamma - \beta}^{\beta}(t-s,x_k-z_k)\right)\\
&& \times \left(\rho_{\beta}^0(s,z_k-y_k) + \rho_{0}^{\beta}(s,z_k-y_k)\right) \, dz_k \, ds,
\end{eqnarray*}
\begin{eqnarray*}
D_{k,m}(t,x,y) &=&
\int_0^t 
\int_{\R} \rho_{\gamma}^0(t-s,x_k-z_k) \rho_{\alpha}^{0}(s,z_k-y_k) \, dz_k\\
&& \times \int_{\R}  \rho_{\alpha}^0(t-s,x_m-z_m) \rho_{\beta}^0(s,z_m-y_m) \, dz_m \, ds,
\end{eqnarray*}
\begin{eqnarray*}
E_{k,m}(t,x,y) &=&
\int_0^t 
\int_{\R} \rho_{\gamma}^0(t-s,x_k-z_k)  \rho_{\alpha}^{0}(s,z_k-y_k) \, dz_k\\
&& \times \int_{\R}  \rho_{\alpha}^0(t-s,x_m-z_m) \rho_{0}^{\beta}(s,z_m-y_m) \, dz_m \, ds,
\end{eqnarray*}
\begin{eqnarray*}
F_{k,m}(t,x,y) &=&
\int_0^t 
\int_{\R} \rho_{\gamma - \beta}^{\beta}(t-s,x_k-z_k) \rho_{\alpha}^{0}(s,z_k-y_k) \, dz_k\\
&& \times \int_{\R}  \rho_{\alpha}^0(t-s,x_m-z_m) \rho_{\beta}^0(s,z_m-y_m) \, dz_m \, ds,
\end{eqnarray*}
\begin{eqnarray*}
G_{k,m}(t,x,y) &=&
\int_0^t 
\int_{\R} \rho_{\gamma-\beta}^{\beta}(t-s,x_k-z_k)  \rho_{\alpha}^{0}(s,z_k-y_k) \, dz_k\\
&& \times \int_{\R} \rho_{\alpha}^0(t-s,x_m-z_m) \rho_{0}^{\beta}(s,z_m-y_m) \, dz_m \, ds.
\end{eqnarray*}

By (\ref{conv2}) we get 
\begin{equation}
\label{Bfor}
B_k(t,x,y) \le 
c \left(\rho_{\gamma}^0(t,x_k-y_k) + \rho_{\gamma - \beta}^{\beta}(t,x_k-y_k)\right).
\end{equation}

Using (\ref{conv1}) we obtain 
\begin{eqnarray}
\nonumber
&& D_{k,m}(t,x,y)\\
\nonumber
&& \le c \int_0^t \left((t-s)^{\frac{\gamma-\alpha}{\alpha}} s + (t - s)^{\frac{\gamma}{\alpha}}\right)
\left(s^{\frac{\beta}{\alpha}} + (t - s) s^{\frac{\beta - \alpha}{\alpha}}\right) \, ds
\rho_{0}^0(t,x_k-y_k) \rho_{0}^0(t,x_m-y_m)\\
\nonumber
&& \le c t^{1 + \frac{\gamma}{\alpha} + \frac{\beta}{\alpha}} \rho_{0}^0(t,x_k-y_k) \rho_{0}^0(t,x_m-y_m)\\
\label{Dfor}
&& \le c t \rho_{\gamma}^0(t,x_k-y_k) \rho_{0}^0(t,x_m-y_m),
\end{eqnarray}
\begin{eqnarray}
\nonumber
&& E_{k,m}(t,x,y)\\
\nonumber
&& \le c \int_0^t \left((t-s)^{\frac{\gamma-\alpha}{\alpha}} s + (t - s)^{\frac{\gamma}{\alpha}}\right)
\left((t - s)^{\beta/\alpha} + (t - s) s^{\frac{\beta - \alpha}{\alpha}}\right) \, ds\\
\nonumber
&& \times
\rho_{0}^0(t,x_k-y_k) \rho_{0}^0(t,x_m-y_m)\\
\nonumber
&& + c \int_0^t \left((t-s)^{\frac{\gamma-\alpha}{\alpha}} s + (t - s)^{\frac{\gamma}{\alpha}}\right) \, ds
\rho_{0}^0(t,x_k-y_k) \rho_{0}^{\beta}(t,x_m-y_m)\\
\nonumber
&& \le c t^{1 + \frac{\gamma}{\alpha} + \frac{\beta}{\alpha}} \rho_{0}^0(t,x_k-y_k) \rho_{0}^0(t,x_m-y_m)
+ c t^{1 + \frac{\gamma}{\alpha}} \rho_{0}^0(t,x_k-y_k) \rho_{0}^{\beta}(t,x_m-y_m)\\
\label{Efor}
&& \le c t \rho_{\gamma}^0(t,x_k-y_k) \rho_{0}^0(t,x_m-y_m),
\end{eqnarray}
\begin{eqnarray}
\nonumber
&& F_{k,m}(t,x,y)\\
\nonumber
&& \le c \int_0^t \left((t-s)^{\frac{\gamma-\alpha}{\alpha}} s + (t - s)^{\frac{\gamma-\beta}{\alpha}} s^{\frac{\beta}{\alpha}}\right)
\left(s^{\frac{\beta}{\alpha}} + (t - s) s^{\frac{\beta - \alpha}{\alpha}}\right) \, ds\\
\nonumber
&& \times
\rho_{0}^0(t,x_k-y_k) \rho_{0}^0(t,x_m-y_m)\\
\nonumber
&& + c \int_0^t \left((t-s)^{\frac{\gamma-\beta}{\alpha}} \left( s^{\frac{\beta}{\alpha}} 
+ (t - s) s^{\frac{\beta - \alpha}{\alpha}}\right) \right) \, ds
\rho_{0}^{\beta}(t,x_k-y_k) \rho_{0}^{0}(t,x_m-y_m)\\
\nonumber
&& \le c t^{1 + \frac{\gamma}{\alpha} + \frac{\beta}{\alpha}} \rho_{0}^0(t,x_k-y_k) \rho_{0}^0(t,x_m-y_m)
+ c t^{1 + \frac{\gamma}{\alpha}} \rho_{0}^{\beta}(t,x_k-y_k) \rho_{0}^{0}(t,x_m-y_m)\\
\label{Ffor}
&& \le c t \rho_{\gamma}^0(t,x_k-y_k) \rho_{0}^0(t,x_m-y_m),
\end{eqnarray}
\begin{eqnarray}
\nonumber
&& G_{k,m}(t,x,y)\\
\nonumber
&& \le c \int_0^t \left((t-s)^{\frac{\gamma-\alpha}{\alpha}} s + (t - s)^{\frac{\gamma-\beta}{\alpha}} s^{\frac{\beta}{\alpha}}\right)
\left((t-s)^{\frac{\beta}{\alpha}} + (t - s) s^{\frac{\beta - \alpha}{\alpha}}\right) \, ds\\
&& \nonumber \times
\rho_{0}^0(t,x_k-y_k) \rho_{0}^0(t,x_m-y_m)\\
\nonumber
&& + c \int_0^t \left((t-s)^{\frac{\gamma-\alpha}{\alpha}} s +  (t-s)^{\frac{\gamma-\beta}{\alpha}} s^{\frac{\beta}{\alpha}} \right) \, ds
\rho_{0}^{0}(t,x_k-y_k) \rho_{0}^{\beta}(t,x_m-y_m)\\
\nonumber
&& + c \int_0^t \left((t-s)^{\frac{\gamma-\beta}{\alpha}} \left( (t-s)^{\frac{\beta}{\alpha}} + (t - s) s^{\frac{\beta - \alpha}{\alpha}}\right) \right) \, ds
\rho_{0}^{\beta}(t,x_k-y_k) \rho_{0}^{0}(t,x_m-y_m)\\
\nonumber
&& + c \int_0^t (t-s)^{\frac{\gamma-\beta}{\alpha}} \, ds
\rho_{0}^{\beta}(t,x_k-y_k) \rho_{0}^{\beta}(t,x_m-y_m)\\
\nonumber
&& \le c t^{1 + \frac{\gamma}{\alpha} + \frac{\beta}{\alpha}} \rho_{0}^0(t,x_k-y_k) \rho_{0}^0(t,x_m-y_m)
+ c t^{1 + \frac{\gamma}{\alpha}} \rho_{0}^{0}(t,x_k-y_k) \rho_{0}^{\beta}(t,x_m-y_m)\\
\nonumber
&& + c t^{1 + \frac{\gamma}{\alpha}} \rho_{0}^{\beta}(t,x_k-y_k) \rho_{0}^0(t,x_m-y_m)
+ c t^{1 + \frac{\gamma}{\alpha} - \frac{\beta}{\alpha}} \rho_{0}^{\beta}(t,x_k-y_k) \rho_{0}^{\beta}(t,x_m-y_m)\\
\label{Gfor}
&& \le c t \rho_{\gamma}^0(t,x_k-y_k) \rho_{0}^0(t,x_m-y_m)
+ c t \rho_{\gamma - \beta}^{\beta}(t,x_k-y_k) \rho_{0}^0(t,x_m-y_m).
\end{eqnarray}

By (\ref{Afor2}-\ref{Gfor}) we obtain
$$
A(t,x,y) \le c \sum_{k = 1}^d \left(\prod_{\substack{i=1\\ i \ne k}}^d t \rho_0^0(t,x_i - y_i)\right) 
\left(\rho_{\gamma}^0(t,x_k-y_k) + \rho_{\gamma - \beta}^{\beta}(t,x_k-y_k)\right).
$$
Using this and (\ref{qn1a}) we obtain that for any $n \in \N$, $x, x', y \in \R^d$, $t \in (0,T]$ and $\gamma \in (0,\beta)$ 
\begin{eqnarray*}
&& |q_n(t,x,y) - q_n(t,x',y)| \le \frac{c C^{n-1}}{(n!)^{\beta/\alpha}} \left( |x - x'|^{\beta - \gamma} \wedge 1\right)\\
&& \times \left[ \sum_{k = 1}^d \left[\prod_{\substack{i=1\\ i \ne k}}^d (t \rho_0^0(t,x_i-y_i))\right] 
\left(\rho_{\gamma}^0(t,x_k-y_k) + \rho_{\gamma - \beta}^{\beta}(t,x_k-y_k)\right) \right. \\
&& + \left. \sum_{k = 1}^d \left[\prod_{\substack{i=1\\ i \ne k}}^d (t \rho_0^0(t,x'_i-y_i))\right] 
\left(\rho_{\gamma}^0(t,x'_k-y_k) + \rho_{\gamma - \beta}^{\beta}(t,x'_k-y_k)\right) \right].
\end{eqnarray*}
This, Lemma \ref{q0Holder} and the definition of $q$ imply the assertion of the theorem.
\end{proof}

\begin{lemma}
\label{pyHolder}
For all $\gamma \in (0,1]$, $x, x', y \in \Rd$, $t >0$, we have
$$
|p_y(t,x) - p_y(t,x')| \le c |x - x'|^{\gamma} t^{-\gamma/\alpha} \left(\left[\prod_{i = 1}^d g_t(x_i)\right] + \left[\prod_{i = 1}^d g_t(x'_i)\right]\right).
$$
\end{lemma}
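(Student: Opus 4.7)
The plan is to treat two regimes separately, according to whether $|x-x'|$ is larger or smaller than $t^{1/\alpha}$. The key preliminary remark is that by (\ref{bounded}), (\ref{standard}), and the elementary fact that $t^{1/\alpha} + |x_i|/a_{ii}(y) \approx t^{1/\alpha} + |x_i|$ whenever $a_{ii}(y) \in [b_1,b_2]$, one has uniformly in $y \in \Rd$
$$
p_y(t,x) \;=\; \prod_{i=1}^d \frac{1}{a_{ii}(y)}\,g_t\!\left(\frac{x_i}{a_{ii}(y)}\right) \;\approx\; \prod_{i=1}^d g_t(x_i).
$$
The same comparability holds with $g_t$ replaced by $\rho_0^0(t,\cdot)$.

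In the regime $|x-x'| \ge t^{1/\alpha}$ one has $|x-x'|^{\gamma} t^{-\gamma/\alpha} \ge 1$, so the conclusion is immediate from the triangle inequality $|p_y(t,x) - p_y(t,x')| \le p_y(t,x) + p_y(t,x')$ together with the comparability above.

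In the regime $|x-x'| < t^{1/\alpha}$ I would factor out the bounded product $\prod_i 1/a_{ii}(y)$ and apply Lemma \ref{productgt1}(ii) to the rescaled vectors $u = (x_i/a_{ii}(y))_i$, $u' = (x'_i/a_{ii}(y))_i$. This produces
$$
\left|\prod_{i=1}^d g_t(u_i) - \prod_{i=1}^d g_t(u'_i)\right| \;\le\; c\,t^d \prod_{i=1}^d \rho_0^0(t,u_i) \left[1 \wedge \sum_{j=1}^d t^{-1/\alpha}|u_j - u'_j|\right].
$$
The comparability from the first paragraph applied to $\rho_0^0$ gives $t^d \prod_i \rho_0^0(t,u_i) \approx \prod_i g_t(x_i)$, and since $t^{-1/\alpha}|x-x'| \le 1$ we have, for any $\gamma \in (0,1]$,
$$
1 \wedge \sum_{j} t^{-1/\alpha}|u_j - u'_j| \;\le\; c\, t^{-1/\alpha}|x-x'| \;\le\; c\,\bigl(t^{-1/\alpha}|x-x'|\bigr)^{\gamma},
$$
which yields the desired bound.

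The only (minor) technical subtlety is that Lemma \ref{productgt1}(ii) is stated under the hypothesis $|x-x'| \le t^{1/\alpha}$, whereas in the rescaled coordinates we only know $|u-u'| \le t^{1/\alpha}/b_1$. This gap is harmless: the hypothesis enters the proof of the lemma solely through (\ref{xxp}), which plainly remains valid, with a constant depending on $C$, under the weaker condition $|u-w| \le C t^{1/\alpha}$. Absorbing that constant into the implicit $c$ is the only adjustment needed.
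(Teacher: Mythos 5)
Your proof is correct and takes essentially the same route as the paper: the paper likewise deduces the estimate from Lemma \ref{productgt1} (which produces the factor $|x-x'|t^{-1/\alpha}\wedge 1$, subsuming your two regimes) and then applies the elementary inequality $(s\wedge 1)\le s^{\gamma}$ for $\gamma\in(0,1]$. Your explicit treatment of the rescaling by $a_{ii}(y)$ and of the constant in (\ref{xxp}) merely spells out a step the paper leaves implicit.
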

\begin{proof}
By Lemma \ref{productgt1} we get
$$
|p_y(t,x) - p_y(t,x')| \le c \left(|x - x'| t^{-1/\alpha} \wedge 1\right) \left(\left[\prod_{i = 1}^d g_t(x_i)\right] + \left[\prod_{i = 1}^d g_t(x'_i)\right]\right).
$$
Since $\left(|x - x'| t^{-1/\alpha} \wedge 1\right) \le  |x - x'|^{\gamma} t^{-\gamma/\alpha}$ we obtain the assertion of the lemma.
\end{proof}

By Lemma \ref{productgt1} and the formula for $p_y(t,x)$ we obtain
\begin{lemma}
\label{lemmagradient}
For any $x, y \in \Rd$ and $t >0$ we have
$$
\left|\nabla p_y(t,\cdot)(x-y) \right| \le c t^{-\frac{1}{\alpha}} \prod_{i = 1}^d \rho_{\alpha}^0(t,x_i-y_i).
$$
\end{lemma}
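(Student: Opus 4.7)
The plan is to differentiate the explicit product formula
\[
p_y(t,x) = \prod_{i=1}^d \frac{1}{a_{ii}(y)}\, g_t\!\left(\frac{x_i}{a_{ii}(y)}\right)
\]
componentwise and apply the pointwise derivative bound for $g_t$ that is already recorded in the proof of Lemma \ref{productgt1}. Specifically, for the $k$-th partial derivative,
\[
\partial_{x_k} p_y(t,x-y) = \frac{1}{a_{kk}^2(y)}\, g_t'\!\left(\frac{x_k-y_k}{a_{kk}(y)}\right) \prod_{\substack{i=1\\ i\ne k}}^d \frac{1}{a_{ii}(y)}\, g_t\!\left(\frac{x_i-y_i}{a_{ii}(y)}\right).
\]

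The key input, established inside the proof of Lemma \ref{productgt1}, is the gradient estimate
\[
\left|\frac{d}{du} g_t(u)\right| \le \frac{c\, g_t(u)}{|u| + t^{1/\alpha}} \le c\, t^{-1/\alpha}\, g_t(u), \quad u \in \R.
\]
Applying this bound to the $k$-th factor and then the standard comparability (\ref{standard}), namely $g_t(u) \approx \rho_{\alpha}^0(t,u)$, together with the uniform bounds (\ref{bounded}) on $a_{ii}(y)$ (which allow one to absorb the rescalings $u \mapsto u/a_{kk}(y)$ up to a multiplicative constant), gives
\[
\left|\partial_{x_k} p_y(t,x-y)\right| \le c\, t^{-1/\alpha} \prod_{i=1}^d \rho_{\alpha}^0(t, x_i - y_i).
\]
Summing over $k \in \{1,\dots,d\}$ yields the desired bound on $|\nabla p_y(t,\cdot)(x-y)|$.

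Since the proof is a direct computation with the Chain rule once the derivative estimate for $g_t$ is in hand, there is no real obstacle — the only mildly delicate point is verifying that the rescaling by $a_{kk}(y)$ can indeed be absorbed into the constant, which is immediate from (\ref{bounded}) and the fact that $\rho_{\alpha}^0(t, u/a_{kk}(y)) \approx \rho_{\alpha}^0(t,u)$ uniformly in $y$.
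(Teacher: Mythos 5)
Your proposal is correct and follows essentially the same route as the paper: the paper derives the lemma directly from the explicit product formula for $p_y(t,x)$ and the one-dimensional derivative bound $|g_t'(u)|\le c\,g_t(u)/(|u|+t^{1/\alpha})$ established inside the proof of Lemma \ref{productgt1}, which is exactly your argument. The absorption of the rescaling by $a_{kk}(y)$ into the constant via (\ref{bounded}) and (\ref{standard}) is handled as you describe, so there is nothing to add.
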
 

\begin{lemma}
\label{integralgrad}
For any $x \in \Rd$ and $t \in (0,T]$ we have
$$
\left|\int_{\Rd} \nabla p_y(t,\cdot)(x-y) \, dy\right| \le c t^{\frac{\beta-1}{\alpha}}.
$$
\end{lemma}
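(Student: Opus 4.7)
The plan is to exploit a cancellation arising from a ``freezing'' of the coefficient. For any fixed parameter $y_0 \in \Rd$, the function $w \mapsto p_{y_0}(t,w)$ is a probability density on $\Rd$ (being a product of scaled one-dimensional stable densities), so $\int_\Rd p_{y_0}(t, x-y)\, dy = 1$ for every $x$, and hence, differentiating under the integral in $x$,
\begin{equation*}
\int_{\Rd} \nabla p_{y_0}(t, \cdot)(x-y) \, dy = 0.
\end{equation*}
Choosing $y_0 = x$ (the fixed outer variable) and subtracting, the target quantity equals
\begin{equation*}
I(t,x) := \int_{\Rd} \bigl[\nabla p_y(t, \cdot)(x-y) - \nabla p_x(t, \cdot)(x-y)\bigr] \, dy.
\end{equation*}

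The next step is to establish a gradient analog of Corollary \ref{productgt2}: for all $x, y, w \in \Rd$ and $t \in (0,T]$,
\begin{equation*}
\bigl|\nabla p_y(t, w) - \nabla p_x(t, w)\bigr| \;\le\; c \bigl(|x - y|^\beta \wedge 1\bigr) \, t^{-1/\alpha} \prod_{i=1}^d \rho_\alpha^0(t, w_i).
\end{equation*}
This is proved along the same product-rule lines as Corollary \ref{productgt2}, except that exactly one factor $a_{jj}^{-1}(\cdot)\, g_t(w_j/a_{jj}(\cdot))$ is replaced by its $w_j$-derivative $a_{jj}^{-2}(\cdot)\, g_t'(w_j/a_{jj}(\cdot))$. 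The Hölder dependence of this factor in the parameter is handled by (\ref{bounded}), (\ref{aHolder}), the bound $|g_t'(u)| \le c g_t(u)(t^{1/\alpha}+|u|)^{-1}$ already used in Lemma \ref{productgt1}, and the standard analogous pointwise bound $|g_t''(u)| \le c g_t(u)(t^{1/\alpha}+|u|)^{-2}$ for the symmetric stable density, applied via the mean value theorem to $a \mapsto a^{-2} g_t'(u/a)$.

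Inserting this estimate with $w = x - y$, using the elementary bound $\bigl(|x-y|^\beta \wedge 1\bigr) \le \sum_{j=1}^d \bigl(|x_j - y_j|^\beta \wedge 1\bigr)$, separating the product integral in $y$, and noting that $(|u|^\beta \wedge 1)\,\rho_\alpha^0(t, u) = \rho_\alpha^\beta(t, u)$, we bound $|I(t,x)|$ by
\begin{equation*}
c \, t^{-1/\alpha} \sum_{j=1}^d \biggl(\int_\R \rho_\alpha^\beta(t, u) \, du\biggr) \prod_{i \ne j} \int_\R \rho_\alpha^0(t, u) \, du.
\end{equation*}
Since $\beta \le \alpha/4 \le \alpha/2$, Lemma \ref{conv}(i) gives $\int_\R \rho_\alpha^\beta(t, u) \, du \le c\, t^{\beta/\alpha}$ and $\int_\R \rho_\alpha^0(t, u) \, du \le c$, whence $|I(t,x)| \le c\, t^{(\beta-1)/\alpha}$, as claimed.

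The main technical obstacle is the gradient difference estimate in the middle step; this is a routine but somewhat involved extension of Lemma \ref{productgt1} and Corollary \ref{productgt2} and requires the second-derivative pointwise bound on the one-dimensional stable density. Everything after that is a one-line convolution calculation via Lemma \ref{conv}(i), and the essential analytic input is the ``frozen density integrates to one'' cancellation, which kills the leading-order singularity.
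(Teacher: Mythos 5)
Your proof is correct, and it follows a genuinely different (though related) route from the paper's. The paper also exploits a freezing cancellation, but only in the first coordinate: it subtracts the kernel with parameter $\tilde y=(x_1,y_2,\ldots,y_d)$, uses that $\int_{\R} g'_t\bigl((x_1-y_1)/a_{11}(\tilde y)\bigr)\,dy_1=0$, and then splits the difference into a term where only the derivative factor changes, estimated by the parameter-H\"older bound \cite[(2.31)]{CZ} (which produces the factor $(|x_1-y_1|^{\beta}\wedge 1)$), and a term where the remaining product changes, estimated by Corollary \ref{mintegrallemma}; both pieces are then integrated via (\ref{conv0}). You instead freeze the whole parameter at $x$, use that the gradient of the probability density $p_x(t,\cdot)$ integrates to zero, and reduce everything to a single parameter-H\"older estimate for the full gradient, $|\nabla p_y(t,w)-\nabla p_x(t,w)|\le c\,(|x-y|^{\beta}\wedge 1)\,t^{-1/\alpha}\prod_{i=1}^d\rho_{\alpha}^0(t,w_i)$. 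This estimate does hold and is provable exactly as you indicate: telescope the product, apply the mean value theorem in the parameter to $a\mapsto a^{-1}g_t(u/a)$ and $a\mapsto a^{-2}g'_t(u/a)$, and use the standard bounds $|g'_t(u)|\le c\,g_t(u)(t^{1/\alpha}+|u|)^{-1}$ and $|g''_t(u)|\le c\,g_t(u)(t^{1/\alpha}+|u|)^{-2}$ together with (\ref{bounded}), (\ref{aHolder}); the final step via Lemma \ref{conv}(i) (legitimate since $\beta\le\alpha/4$) then gives $c\,t^{-1/\alpha}t^{\beta/\alpha}=c\,t^{(\beta-1)/\alpha}$. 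What your version buys is a more symmetric argument resting on one unified lemma (a gradient analogue of Corollary \ref{productgt2}); what it costs is the second-derivative bound on $g_t$ and a new H\"older-in-parameter lemma, which the paper avoids by freezing only the first coordinate so that it can quote \cite[(2.31)]{CZ} verbatim and reuse Corollary \ref{mintegrallemma} without any additional estimates on $g_t$.
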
 
\begin{proof}
Let $D_1 p_y(t,\cdot)(x-y) = \lim_{h \to 0} (p_y(t,x-y+he_1)-p_y(t,x-y))/h$. It is enough to prove the estimate for 
$\text{I} = \int_{\Rd} D_1 p_y(t,\cdot)(x-y) \, dy$. Let $\gamma \in (0,\beta)$ and put $\tilde{y} = (x_1,y_2,\ldots,y_d)$. We have
\begin{eqnarray*}
|\text{I}| &=& 
\left| \int_{\Rd} \left[ \frac{1}{a_{11}^2(y)}g'_t\left(\frac{x_1-y_1}{a_{11}(y)}\right)
\left[\prod_{i=2}^d \frac{1}{a_{ii}(y)}g_t\left(\frac{x_i-y_i}{a_{ii}(y)}\right)\right]
\right. \right. \\
&& \left. \left. - \frac{1}{a_{11}^2(\tilde{y})}g'_t\left(\frac{x_1-y_1}{a_{11}(\tilde{y})}\right)
\left[\prod_{i=2}^d \frac{1}{a_{ii}(\tilde{y})}g_t\left(\frac{x_i-y_i}{a_{ii}(\tilde{y})}\right)\right] \right] \, dy \right|\\
&\le& \left| \int_{\R^{d-1}} \left[ \int_{\R} \left[\frac{1}{a_{11}^2(y)}g'_t\left(\frac{x_1-y_1}{a_{11}(y)}\right)
- \frac{1}{a_{11}^2(\tilde{y})}g'_t\left(\frac{x_1-y_1}{a_{11}(\tilde{y})}\right) \right] \, dy_1 \right] \right.\\
&& \left. \times \left[\prod_{i=2}^d \frac{1}{a_{ii}(\tilde{y})}g_t\left(\frac{x_i-y_i}{a_{ii}(\tilde{y})}\right)\right] \,dy_2, \, \ldots \,dy_d \right|\\
&+& \left| \int_{\Rd} \frac{1}{a_{11}^2(y)}g'_t\left(\frac{x_1-y_1}{a_{11}(y)}\right) \right. \\
&& \left. \times \left[\prod_{i=2}^d \frac{1}{a_{ii}(y)}g_t\left(\frac{x_i-y_i}{a_{ii}(y)}\right)
- \prod_{i=2}^d \frac{1}{a_{ii}(\tilde{y})}g_t\left(\frac{x_i-y_i}{a_{ii}(\tilde{y})}\right)\right] \, dy \right|\\
&=& \text{II} + \text{III}.
\end{eqnarray*}
By \cite[(2.31)]{CZ} we get 
$$
\left|\frac{1}{a_{11}^2(y)}g'_t\left(\frac{x_1-y_1}{a_{11}(y)}\right)
- \frac{1}{a_{11}^2(\tilde{y})}g'_t\left(\frac{x_1-y_1}{a_{11}(\tilde{y})}\right) \right|
\le c (|x_1-y_1|^{\beta} \wedge 1) t^{-1/\alpha} (\rho_{\alpha}^{0}(t,x_1-y_1) + \rho_{\alpha - \gamma}^{\gamma}(t,x_1-y_1)).
$$
Using this and (\ref{conv0}) we obtain $\text{II} \le c t^{\frac{\beta-1}{\alpha}}$. Note also that 
$$
\left|\frac{1}{a_{11}^2(y)}g'_t\left(\frac{x_1-y_1}{a_{11}(y)}\right)\right| \le c \rho_{\alpha-1}^{0}(t,x_1-y_1).
$$
Using this, Corollary \ref{mintegrallemma} and (\ref{conv0}) we get $\text{III} \le c t^{\frac{\beta-1}{\alpha}}$.
\end{proof}

Similarly as in \cite{CZ} we denote
$$
\phi_y(t,x,s) = \int_{\Rd} p_z(t-s,x-z)q(s,z,y) \, dz
$$
and
$$
\varphi_y(t,x) = \int_0^t  \phi_y(t,x,s) \, ds.
$$
Clearly we have 
\begin{equation} 
\label{pA}
p^A(t,x,y) = p_y(t,x-y) + \varphi_y(t,x).
\end{equation}

By well known estimates of $\nabla p_z(t-s,\cdot)(x-z)$ and Theorem \ref{thmq} we easily obtain the following result.
\begin{lemma}
\label{varphiintegral}
For any $x, y \in \Rd$, $t > 0$ and $s \in (0,t)$ we have
$$
\nabla_x \phi_y(t,x,s) = 
\int_{\Rd} \nabla p_z(t-s,\cdot)(x-z) q(s,z,y) \, dz.
$$
\end{lemma}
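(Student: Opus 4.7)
The plan is to justify differentiation under the integral sign by Lebesgue's dominated convergence theorem. For a unit vector $e_j$ and $h \in \R \setminus \{0\}$, I would write the difference quotient
\[
\frac{\phi_y(t,x+he_j,s) - \phi_y(t,x,s)}{h} = \int_{\Rd} \frac{p_z(t-s, x+he_j-z) - p_z(t-s, x-z)}{h}\, q(s,z,y) \, dz,
\]
and apply the mean value theorem (note $p_z$ is smooth in the spatial variable) to express the integrand as $\partial_j p_z(t-s,\cdot)(\xi - z)$ for some point $\xi=\xi(h,z)$ on the segment joining $x$ and $x+he_j$.

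The key step is to produce an $h$-uniform integrable majorant. I restrict to $|h| \le (t-s)^{1/\alpha}/2$, so that $|\xi_i - x_i| \le (t-s)^{1/\alpha}$ for every $i$. Then (\ref{xxp}) yields $\rho_0^0(t-s, \xi_i - z_i) \approx \rho_0^0(t-s, x_i - z_i)$, and combining with Lemma \ref{lemmagradient} gives
\[
|\nabla p_z(t-s, \cdot)(\xi - z)| \le c (t-s)^{-1/\alpha} \prod_{i=1}^d \rho_\alpha^0(t-s, x_i - z_i),
\]
with a constant independent of $h$. Multiplying by the bound for $|q(s,z,y)|$ supplied by Theorem \ref{thmq} produces a function of $z$ alone (for $t,s,x,y$ fixed). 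Applying Lemma \ref{conv}(ii) coordinatewise to each one-dimensional convolution $\int_\R \rho_\alpha^0(t-s, x_i - z_i)\,\rho_0^0(s, z_i - y_i) \, dz_i$, and treating the extra $(|z_m - y_m|^\beta \wedge 1)$ summand identically (with $\rho_0^\beta$ replacing $\rho_0^0$ in one coordinate), shows that this majorant is integrable in $z$ for every fixed $s\in(0,t)$.

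With such a uniform, integrable dominating function, Lebesgue's theorem allows passage to the limit $h\to 0$ under the integral, and yields the asserted identity. The only subtle point is the uniform domination: the mean value theorem produces the gradient at the displaced argument $\xi-z$ rather than $x-z$, and (\ref{xxp}) — which is only available when $|\xi-x|$ is no larger than $(t-s)^{1/\alpha}$ — is what lets us bound it in terms of $\rho_\alpha^0(t-s,x_i-z_i)$. Since only the pointwise derivative at $x$ is at stake, restricting to small $|h|$ is harmless.
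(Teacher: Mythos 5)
Your proposal is correct and is essentially the argument the paper has in mind: the paper states this lemma without proof, remarking only that it follows from the gradient estimates of $p_z(t-s,\cdot)$ (Lemma \ref{lemmagradient}) and Theorem \ref{thmq}, which is precisely the dominated-convergence justification you spell out. Your use of the mean value theorem together with (\ref{xxp}) for $|h|\le (t-s)^{1/\alpha}/2$ to get an $h$-uniform integrable majorant is the right way to make the omitted details rigorous.
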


The next result is the most important step in proving gradient estimates of $p^A(t,x,y)$.
\begin{lemma}
\label{gradientvarphi}
For any $\alpha \in (1,2)$, $x, y \in \Rd$ and $t \in (0,T]$ we have
\begin{equation}
\label{gradientvarphi1}
\nabla_x \varphi_y(t,x) = 
\int_0^t \int_{\Rd} \nabla p_z(t-s,\cdot)(x-z) q(s,z,y) \, dz \, ds.
\end{equation}
and
\begin{equation}
\label{gradientvarphi2}
|\nabla_x \varphi_y(t,x)| \le c t^{-\frac{1}{\alpha}} \prod_{i = 1}^d \rho_{\alpha}^0(t,x_i-y_i).
\end{equation}
\end{lemma}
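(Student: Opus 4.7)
The plan is to establish \eqref{gradientvarphi1} by moving $\nabla_x$ past the $s$-integral in $\varphi_y(t,x)=\int_0^t \phi_y(t,x,s)\,ds$, and then to read off \eqref{gradientvarphi2} from the same dominating bound. For each fixed $s\in(0,t)$, Lemma \ref{varphiintegral} already gives
\[
\nabla_x \phi_y(t,x,s)=\int_{\Rd}\nabla p_z(t-s,\cdot)(x-z)\,q(s,z,y)\,dz,
\]
so by a standard differentiation-under-the-integral argument (or dominated convergence applied to difference quotients) it suffices to verify that
\[
\int_0^t\int_{\Rd}|\nabla p_z(t-s,\cdot)(x-z)|\,|q(s,z,y)|\,dz\,ds<\infty
\]
locally uniformly in $x$. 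The same computation will produce \eqref{gradientvarphi2}, so the whole statement reduces to a single upper bound on this double integral.

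To produce that bound, I would insert Lemma \ref{lemmagradient},
\[
|\nabla p_z(t-s,\cdot)(x-z)|\le c(t-s)^{-1/\alpha}\prod_{i=1}^{d}\rho_\alpha^0(t-s,x_i-z_i),
\]
together with Theorem \ref{thmq},
\[
|q(s,z,y)|\le c\,s^{d-1}\Bigl[\prod_{i=1}^{d}\rho_0^0(s,z_i-y_i)\Bigr]\Bigl[s^{\beta/\alpha}+\sum_{m=1}^{d}(|z_m-y_m|^\beta\wedge 1)\Bigr].
\]
Using the identities $\rho_\alpha^0(s,u)=s\rho_0^0(s,u)$ and $(|u|^\beta\wedge 1)\rho_0^0(s,u)=\rho_0^\beta(s,u)$, the second factor decomposes the integrand into one \emph{diagonal} piece bounded by $s^{\beta/\alpha-1}\prod_i\rho_\alpha^0(s,z_i-y_i)$ and $d$ \emph{mixed} pieces, each bounded by $s^{-1}\prod_{i\ne m}\rho_\alpha^0(s,z_i-y_i)\cdot\rho_0^\beta(s,z_m-y_m)$.

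Next I would integrate in $z$ coordinate by coordinate. For every $i\ne m$, Lemma \ref{conv}(ii) with $\beta_1=\beta_2=0$, $\gamma_1=\gamma_2=\alpha$ gives $\int_{\R}\rho_\alpha^0(t-s,x_i-z_i)\rho_\alpha^0(s,z_i-y_i)\,dz_i\le c\,t\,\rho_0^0(t,x_i-y_i)$, which turns the non-exceptional part of the product into $c\,t^{d-1}\prod_{i\ne m}\rho_0^0(t,x_i-y_i)$. For the exceptional coordinate in a mixed piece, Lemma \ref{conv}(ii) with $\gamma_1=\alpha$, $\beta_1=0$, $\gamma_2=0$, $\beta_2=\beta$ produces a linear combination of $(t-s)^{\beta/\alpha}\rho_0^0(t,x_m-y_m)$, $(t-s)\,s^{(\beta-\alpha)/\alpha}\rho_0^0(t,x_m-y_m)$, and an unweighted $\rho_0^\beta(t,x_m-y_m)$. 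Multiplying by the prefactor $(t-s)^{-1/\alpha}$ and integrating in $s$ leads to Beta-function integrals such as $\int_0^t(t-s)^{-1/\alpha}s^{\beta/\alpha-1}\,ds=c\,t^{(\beta-1)/\alpha}$ and $\int_0^t(t-s)^{-1/\alpha}\,ds=c\,t^{1-1/\alpha}$, all finite precisely because $\alpha>1$; this is the only place where the hypothesis $\alpha\in(1,2)$ is invoked. Converting $t^d\prod_i\rho_0^0(t,x_i-y_i)$ back into $\prod_i\rho_\alpha^0(t,x_i-y_i)$ and using $\rho_0^\beta\le\rho_0^0$ for the exceptional coordinate, the extra positive powers of $t\le T$ can be absorbed into the constant, yielding the asserted bound $c\,t^{-1/\alpha}\prod_{i=1}^d\rho_\alpha^0(t,x_i-y_i)$.

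The main obstacle I anticipate is the coordinate bookkeeping in the convolution step: Lemma \ref{conv}(ii) produces four terms (two carrying $\rho_0^\beta$ rather than $\rho_0^0$), and one must check case by case that every resulting product, after the $s$-integration, is dominated by the clean expression $c\,t^{-1/\alpha}\prod_i\rho_\alpha^0(t,x_i-y_i)$ with a constant depending only on $T$. Apart from this accounting, the only genuinely non-routine point is the integrability at $s=t$ of $(t-s)^{-1/\alpha}$, which is exactly what the assumption $\alpha\in(1,2)$ provides.
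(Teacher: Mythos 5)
Your proposal follows essentially the same route as the paper's proof: Lemma \ref{varphiintegral} for the inner $z$-integral, the bounds of Lemma \ref{lemmagradient} and Theorem \ref{thmq}, coordinate-wise convolution via Lemma \ref{conv}(ii), and Beta-type $s$-integrals in which $\alpha>1$ is used exactly to integrate $(t-s)^{-1/\alpha}$ near $s=t$, after which both \eqref{gradientvarphi1} and \eqref{gradientvarphi2} follow from the same dominating bound. One small slip: the mixed pieces equal $\prod_{i\ne m}\rho_{\alpha}^0(s,z_i-y_i)\,\rho_{0}^{\beta}(s,z_m-y_m)$ with no $s^{-1}$ prefactor (the $d-1$ powers of $s$ from $s^{d-1}$ are exactly absorbed by the $d-1$ non-exceptional coordinates), which is in fact consistent with the $s$-integrals you actually compute, so the argument goes through as in the paper.
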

\begin{proof}

 Let $x, y \in \Rd$, $t \in (0,T]$ and $s \in (0,t)$. The main tool used in this case is Theorem \ref{thmq}. Using this theorem, Lemmas \ref{lemmagradient}, \ref{varphiintegral} and (\ref{conv1}) we obtain
\begin{eqnarray*}
&& |\nabla_x \phi_y(t,x,s)| \le \int_{\Rd} \left|\nabla p_z(t-s,\cdot)(x-z)\right| \left|q(s,z,y)\right| \, dz\\ 
&& \le c \int_{\Rd} (t-s)^{-1/\alpha} p_z(t-s,x-z)\\ 
&& \times \sum_{m=1}^d \left[\prod_{\substack{i=1\\ i \ne m}}^d \rho_{\alpha}^0(s,z_i-y_i)\right]
[ \rho_{0}^{\beta}(s,z_m-y_m) + \rho_{\beta}^0(s,z_m-y_m)] \, dz\\
&& \le c \sum_{m=1}^d \left[\prod_{\substack{i=1\\ i \ne m}}^d \rho_{\alpha}^0(t,x_i-y_i)\right]
 \int_{\R} \rho_{\alpha-1}^0(t-s,x_m-z_m) \\
&& \times [ \rho_{0}^{\beta}(s,z_m-y_m) +\rho_{\beta}^0(s,z_m-y_m)] \, dz\\
&& \le c \sum_{m=1}^d \left[\prod_{\substack{i=1\\ i \ne m}}^d \rho_{\alpha}^0(t,x_i-y_i)\right]
\left[
(t - s)^{\frac{\beta-1}{\alpha}} \rho_{0}^{0}(t,x_m-y_m) \right.\\
&& 
+ (t - s)^{\frac{\alpha-1}{\alpha}} s^{\frac{\beta-\alpha}{\alpha}}\rho_{0}^{0}(t,x_m-y_m) 
 + (t - s)^{-\frac{1}{\alpha}} \rho_{0}^{\beta}(t,x_m-y_m)\\
&& \left. + (t - s)^{-\frac{1}{\alpha}} s^{\frac{\beta}{\alpha}}\rho_{0}^{0}(t,x_m-y_m)
\right].
\end{eqnarray*}
It follows that 
$$
 \nabla_x \left[\int_0^t  \phi_y(t,x,s) \, ds \right] = \int_0^t \nabla_x  \phi_y(t,x,s) \, ds,
$$
which implies (\ref{gradientvarphi1}). We also obtain
\begin{eqnarray*}
&&\left|\int_0^t \nabla_x  \phi_y(t,x,s) \, ds\right| \\
&& \le c \sum_{m=1}^d \left[\prod_{\substack{i=1\\ i \ne m}}^d \rho_{\alpha}^0(t,x_i-y_i)\right]
\int_0^t \left[
(t - s)^{\frac{\beta-1}{\alpha}} \rho_{0}^{0}(t,x_m-y_m) \right. \\
&& + (t - s)^{\frac{\alpha-1}{\alpha}} s^{\frac{\beta-\alpha}{\alpha}}\rho_{0}^{0}(t,x_m-y_m) 
+ (t - s)^{-\frac{1}{\alpha}} \rho_{0}^{\beta}(t,x_m-y_m) \\
&& + \left. (t - s)^{-\frac{1}{\alpha}} s^{\frac{\beta}{\alpha}}\rho_{0}^{0}(t,x_m-y_m) \right] \, ds.\\
&& \le c \sum_{m=1}^d \left[\prod_{\substack{i=1\\ i \ne m}}^d \rho_{\alpha}^0(t,x_i-y_i)\right]
[ \rho_{\alpha+\beta-1}^{0}(t,x_m-y_m) + \rho_{\alpha-1}^{\beta}(t,x_m-y_m)],
\end{eqnarray*}
which implies (\ref{gradientvarphi2}).




\end{proof}

\begin{proposition}
\label{gradient1} For any $\alpha \in (1,2)$, $t \in (0,T]$ and $x, y \in \Rd$ we have
$$
|\nabla_x p^A(t,x,y)| \le c t^{-\frac1\alpha} \prod_{i = 1}^d g_t(x_i - y_i).
$$
\end{proposition}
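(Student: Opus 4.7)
The plan is to use the decomposition (\ref{pA}), namely $p^A(t,x,y) = p_y(t,x-y) + \varphi_y(t,x)$, and differentiate term by term in $x$, bounding each contribution by $c\, t^{-1/\alpha} \prod_{i=1}^d g_t(x_i - y_i)$.

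First, for the main term $p_y(t,x-y)$, Lemma \ref{lemmagradient} gives directly
\[
|\nabla_x p_y(t,\cdot)(x-y)| \le c\, t^{-1/\alpha} \prod_{i=1}^d \rho_{\alpha}^0(t,x_i - y_i),
\]
which by the equivalence (\ref{standard}) is bounded by $c\, t^{-1/\alpha} \prod_{i=1}^d g_t(x_i - y_i)$, as required.

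Second, for the correction term $\varphi_y(t,x)$, the key input is Lemma \ref{gradientvarphi}. Since $\alpha \in (1,2)$, that lemma both justifies differentiating under the integral defining $\varphi_y$ and yields the bound
\[
|\nabla_x \varphi_y(t,x)| \le c\, t^{-1/\alpha} \prod_{i=1}^d \rho_{\alpha}^0(t,x_i - y_i) \le c\, t^{-1/\alpha} \prod_{i=1}^d g_t(x_i - y_i),
\]
again using (\ref{standard}).

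Combining the two bounds by the triangle inequality applied to $\nabla_x p^A(t,x,y) = \nabla_x p_y(t,\cdot)(x-y) + \nabla_x \varphi_y(t,x)$ gives the claimed estimate. The only non-routine step has already been carried out in Lemma \ref{gradientvarphi}, where the assumption $\alpha>1$ was needed to obtain integrability in $s$ of the gradient of the freezing heat kernel against $q$; here the proof of the proposition is essentially a one-line assembly of previously established estimates.
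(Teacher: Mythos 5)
Your proposal is correct and follows exactly the paper's argument: decompose $p^A$ via (\ref{pA}), apply Lemma \ref{lemmagradient} to $p_y(t,x-y)$ and Lemma \ref{gradientvarphi} to $\varphi_y(t,x)$ (where $\alpha>1$ was used), and convert $\prod_i \rho_\alpha^0(t,x_i-y_i)$ to $\prod_i g_t(x_i-y_i)$ via (\ref{standard}). Nothing is missing.
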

\begin{proof}
The assertion follows from formula (\ref{pA}) and Lemmas \ref{lemmagradient}, \ref{gradientvarphi}.
\end{proof}

\section{Feller semigroup}
For any bounded Borel $f: \Rd \to \R$, $t \in (0,\infty)$ and $x \in \Rd$ we define
$$
P_t^A f(x) = \int_{\Rd} p^A(t,x,y) f(y) \, dy.
$$
The main aim of this section is to show that $\{P_t^A\}$ is a Feller semigroup. 

For any $\eps \ge 0$ and $x \in \Rd$ we put
$$
\calL_{\eps} f(x) = \frac{\calA_{\alpha}}{2} \sum_{i = 1}^d \int_{\{z_i:\, |z_i| > \eps\}} \delta_f(x,e_i z_i) \, \s_i(x) \frac{dz_i}{|z_i|^{1 + \alpha}},
$$
$$
\calL_{\eps}^y f(x) = \frac{\calA_{\alpha}}{2} \sum_{i = 1}^d \int_{\{z_i:\, |z_i| > \eps\}} \delta_f(x,e_i z_i) \, \s_i(y) \frac{dz_i}{|z_i|^{1 + \alpha}}.
$$


Using \cite [(3.13)]{CZ} and the same arguments as in the proof of Lemma \ref{integralgrad} we obtain
\begin{lemma}
\label{integralfractional}
For any $\eps > 0$, $x, y \in \Rd$ and $t \in (0,T]$ we have
$$
\left|\int_{\Rd} \calL_{\eps}^x p_y(t,\cdot)(x-y) \, dy\right| \le c t^{\frac{\beta-\alpha}{\alpha}}.
$$
\end{lemma}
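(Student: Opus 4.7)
The plan is to adapt the proof of Lemma~\ref{integralgrad} almost verbatim, replacing the one-dimensional partial derivative $D_1$ by the truncated one-dimensional fractional operator in the first coordinate. The mnemonic is that $D_1$ had ``order $1$'' and produced the exponent $t^{(\beta-1)/\alpha}$; the fractional operator has ``order $\alpha$'' and should therefore produce $t^{(\beta-\alpha)/\alpha}$.

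First, by linearity I would write $\calL_\eps^x = \sum_{k=1}^d \calL_{\eps,k}^x$, where $\calL_{\eps,k}^x$ involves only second differences in the direction $e_k$. By symmetry of the argument over the $d$ coordinates it suffices to handle $k=1$; since $\s_1(x)$ is bounded by $b_2^\alpha$, this reduces the problem to controlling
\[
J(x,t) := \int_{\Rd}\!\! \int_{\{|z_1|>\eps\}} \delta_{p_y}(t, x-y, e_1 z_1)\, \frac{dz_1}{|z_1|^{1+\alpha}}\, dy.
\]
The product structure of $p_y$ factors the second difference in the first coordinate: writing $\tilde g_y(u) = a_{11}^{-1}(y) g_t(u/a_{11}(y))$, the inner $dz_1$-integral equals $\calD_y \cdot \prod_{i=2}^d a_{ii}^{-1}(y) g_t((x_i-y_i)/a_{ii}(y))$, where $\calD_y := \int_{\{|z_1|>\eps\}} \delta_{\tilde g_y}(x_1-y_1, z_1) |z_1|^{-1-\alpha}\, dz_1$.

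Second, mirroring Lemma~\ref{integralgrad}, I would introduce the freezing point $\tilde y = (x_1, y_2, \ldots, y_d)$ and split $J = \text{II} + \text{III}$, where II keeps the product over $i\ge 2$ evaluated at $\tilde y$ and absorbs all the $y_1$-dependence of the coefficients into $\calD_y - \calD_{\tilde y}$, while III keeps $\calD_y$ intact and differences the product over $i\ge 2$ between $y$ and $\tilde y$. For II, I will invoke \cite[(3.13)]{CZ} --- the fractional-operator analogue of the gradient estimate \cite[(2.31)]{CZ} used before --- to bound
\[
|\calD_y - \calD_{\tilde y}| \le c\, (|x_1 - y_1|^\beta \wedge 1)\, \bigl(\rho_{0}^{0}(t,x_1-y_1) + \rho_{-\gamma}^{\gamma}(t,x_1-y_1)\bigr)
\]
for some auxiliary $\gamma \in (0,\beta)$; combined with (\ref{conv0}) applied to the remaining $d-1$ one-dimensional factors, and then with one last integration in $y_1$ (again by (\ref{conv0})), this yields $c\,t^{(\beta-\alpha)/\alpha}$. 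For III, the crude bound $|\calD_y| \le c\, \rho_{0}^{0}(t, x_1-y_1)$ (i.e.\ the 1D version of \cite[(2.28)]{CZ} already used in Lemma~\ref{generatorpy1}) is combined with Corollary~\ref{mintegrallemma}, whose $L^1(dy_2\cdots dy_d)$-estimate produces precisely the factor $(|x_1-y_1|^\beta \wedge 1)$; a final $dy_1$-integration via (\ref{conv0}) again gives $c\, t^{(\beta-\alpha)/\alpha}$.

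The main obstacle is the bookkeeping for II, i.e.\ writing \cite[(3.13)]{CZ} in a form whose $\rho$-profiles in $x_1-y_1$ are exactly those that integrate, via (\ref{conv0}), to $t^{(\beta-\alpha)/\alpha}$, with constants that are uniform in $\eps > 0$. The uniformity in $\eps$ is crucial but not delicate: the decomposition $\calD_y - \calD_{\tilde y}$ factors out the Hölder continuity of $a_{11}$, so the singularity of the kernel at $z_1 = 0$ is not actually used and the truncation $|z_1|>\eps$ plays no role in the bounds. Every other step is a line-by-line transcription of the proof of Lemma~\ref{integralgrad}, with the exponent $-1$ replaced throughout by $-\alpha$.
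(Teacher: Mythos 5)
Your proposal is correct and is essentially the paper's own proof, which consists precisely of invoking \cite[(3.13)]{CZ} and repeating the argument of Lemma \ref{integralgrad}: the same reduction to one coordinate, the same freezing point $\tilde y=(x_1,y_2,\ldots,y_d)$, the same split into a term controlled by the coefficient-continuity estimate \cite[(3.13)]{CZ} and a term controlled by the bound $|\calD_y|\le c\rho_0^0(t,x_1-y_1)$ together with Corollary \ref{mintegrallemma}, each finished off with (\ref{conv0}). The only step worth making explicit (it is also implicit in the paper's proof of Lemma \ref{integralgrad}) is that inserting the frozen term is free because $\int_{\R}\calD_{\tilde y}(x_1-y_1)\,dy_1=0$ — for fixed $\eps>0$ Fubini applies and $\int_{\R}\delta_{h}(t,u,z)\,du=0$ — which is the fractional analogue of $\int_{\R}g_t'=0$ and is exactly what makes your decomposition $J=\text{II}+\text{III}$ (rather than merely $|J|\le\text{II}+\text{III}+\text{a frozen remainder}$) legitimate.
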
 

\begin{lemma}
\label{fractionalheat}
For any $x, y \in \Rd$ and $t > 0$ $\calL^x \varphi_y(t,x)$ is well defined and we have
\begin{equation}
\label{genvarphi1}
\calL^x \varphi_y(t,x) =
\int_0^t \int_{\Rd} \calL^x p_z(t-s,\cdot)(x-z) q(s,z,y) \, dz \, ds.
\end{equation}
Fix $\gamma \in (0,\beta)$. There exists $c$ such that for any $\eps > 0$,   $t \in (0,T]$ and $x, y \in \Rd$ we have
\begin{equation}
\label{epsheat}
|\calL_{\eps} p^A(t,\cdot,y)(x)| \le c t^{\frac{-\alpha+\gamma-\beta}{\alpha}} \prod_{i = 1}^d \rho_{\alpha}^0(t,x_i-y_i).
\end{equation}
Moreover, $t \to \calL^x \varphi_y(t,x)$ is continuous on $(0,T)$ for any $x, y \in \Rd$.
\end{lemma}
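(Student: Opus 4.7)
The proof rests on the decomposition identity
\[
\calL^x p_z(t-s,\cdot)(x-z) = \frac{\partial}{\partial t} p_z(t-s,x-z) + q_0(t-s,x,z),
\]
obtained by writing $\calL^x=\calL^z+(\calL^x-\calL^z)$: the first piece equals $\partial_t p_z(t-s,x-z)$ by \eqref{pyparabolic}, while the second piece matches exactly the definition of $q_0(t-s,x,z)$ (using the translation identity $\delta_{p_z(t-s,\cdot-z)}(x,e_i u)=\delta_{p_z(t-s,\cdot)}(x-z,e_i u)$). The truncated analog
$\calL^x_\eps p_z(t-s,\cdot)(x-z) = \calL^z_\eps p_z(t-s,\cdot)(x-z) + (\calL^x_\eps-\calL^z_\eps) p_z(t-s,\cdot)(x-z)$
enjoys uniform-in-$\eps$ bounds: the first term by Lemma \ref{generatorpy1}, and the second by the computation yielding \eqref{q0}.

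To establish \eqref{genvarphi1}, I start at $\eps>0$. The bound
$\int_{|u|>\eps}|\delta_{p_z}(t-s,x-z,e_i u)|\,du/|u|^{1+\alpha}\le c(t-s)^{d-1}\prod_j\rho_0^0(t-s,x_j-z_j)$
from Lemma \ref{generatorpy1}, combined with Theorem \ref{thmq} and Lemma \ref{conv}(iii), delivers absolute integrability, so Fubini gives
\[
\calL^x_\eps\varphi_y(t,x)=\int_0^t\!\!\int_{\Rd}\calL^x_\eps p_z(t-s,\cdot)(x-z) q(s,z,y)\,dz\,ds.
\]
Passing $\eps\to 0$ via dominated convergence (pointwise convergence of the integrand comes from the decomposition identity, with the $\eps$-independent majorant furnished by the bounds above) both shows that $\calL^x\varphi_y(t,x)$ is well defined and yields \eqref{genvarphi1}.

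For the pointwise estimate \eqref{epsheat}, I decompose $p^A=p_y(t,\cdot-y)+\varphi_y(t,\cdot)$. The first piece is controlled uniformly in $\eps$ by Lemma \ref{generatorpy1}:
$|\calL_\eps p_y(t,\cdot-y)(x)|\le c t^{d-1}\prod_i\rho_0^0(t,x_i-y_i)=c t^{-1}\prod_i\rho_\alpha^0(t,x_i-y_i)$,
which is dominated by $c t^{(-\alpha+\gamma-\beta)/\alpha}\prod_i\rho_\alpha^0(t,x_i-y_i)$ because $t\le T$ and $\gamma<\beta$. For the $\varphi_y$-piece I combine the Fubini representation with the $\eps$-truncated decomposition. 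The first resulting integral uses $|\calL^z_\eps p_z(t-s,\cdot)(x-z)|\le c(t-s)^{d-1}\prod_j\rho_0^0(t-s,x_j-z_j)$ and is handled by Theorem \ref{thmq} and Lemma \ref{conv}(iii). The second integrand is bounded by a constant multiple of the right-hand side of \eqref{q0}; at $\eps=0$ the integral equals $q(t,x,y)-q_0(t,x,y)$ by the perturbation identity $\int_0^t\!\int_{\Rd} q_0(t-s,x,z) q(s,z,y)\,dz\,ds=q(t,x,y)-q_0(t,x,y)$ coming from \eqref{defqn}, so it is controlled by Theorem \ref{thmq}, with the uniform-in-$\eps$ bound propagating. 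Inside Lemma \ref{conv}(iii) the parameter $\gamma\in(0,\beta)$ plays the role of a Hölder regularization exponent that yields precisely the sharp factor $t^{(-\alpha+\gamma-\beta)/\alpha}$.

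Finally, the continuity of $t\mapsto\calL^x\varphi_y(t,x)$ on $(0,T)$ follows from \eqref{genvarphi1} by dominated convergence: via the decomposition identity the integrand is jointly continuous in $(s,z)$ on $(0,t)\times\Rd$ (using \eqref{pyparabolic} for continuity of $\partial_t p_z$ and Theorem \ref{thmq} for continuity of $q_0$ and $q$), and a locally $t$-uniform integrable majorant is supplied by the previous paragraph. The principal obstacle I anticipate is extracting the sharp exponent in \eqref{epsheat}: the numerous competing $(t-s)^{\ast}s^{\ast}$ singularities in Lemma \ref{conv}(iii) must be tracked meticulously, and the slack $\gamma\in(0,\beta)$ must be threaded through as a regularization parameter absorbing borderline divergent behavior near $s=t$.
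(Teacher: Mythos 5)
Your Fubini step at fixed $\eps>0$ can be justified (though not by the route you cite), but the heart of the lemma --- passing $\eps\to 0^+$ inside the time integral and obtaining the bound (\ref{epsheat}) --- does not follow from the majorant you propose, and this is a genuine gap. Your $\eps$-independent bound for $\calL^x_\eps p_z(t-s,\cdot)(x-z)$ is $c(t-s)^{d-1}\prod_j\rho_0^0(t-s,x_j-z_j)$ (plus the (\ref{q0})-type term), i.e.\ $c(t-s)^{-1}\prod_j\rho_\alpha^0(t-s,x_j-z_j)$. When you pair this with the bound on $q(s,z,y)$ from Theorem \ref{thmq} and integrate in $z$, the convolution inequality (\ref{conv1}) produces terms such as $(t-s)^{-1}\rho_0^{\beta}(t,x_m-y_m)$ and $(t-s)^{-1}s^{\beta/\alpha}\rho_0^0(t,x_m-y_m)$, which are \emph{not} integrable in $s$ near $s=t$; correspondingly, Lemma \ref{conv}(iii) simply does not apply here, because for the $\rho_0^0$-factors one has $\gamma_1+\beta_1=0$, violating its hypothesis. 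So there is no integrable $\eps$-independent majorant of the naive form, and your dominated-convergence argument for (\ref{genvarphi1}), for the continuity in $t$, and your derivation of the sharp exponent $t^{(-\alpha+\gamma-\beta)/\alpha}$ in (\ref{epsheat}) all collapse at this point. Your decomposition $\calL^x=\calL^z+(\calL^x-\calL^z)$ does not help, since the $\calL^z p_z=\partial_t p_z$ piece carries exactly this non-integrable $(t-s)^{-1}$ singularity with no smallness factor (cf.\ (\ref{dert})).

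The missing ideas are the two cancellation mechanisms the paper exploits on $s\in(t/2,t)$ after splitting $q(s,z,y)=q(s,x,y)+\bigl(q(s,z,y)-q(s,x,y)\bigr)$: for the frozen part, one integrates $\calL^x_\eps p_z(t-s,\cdot)(x-z)$ in $z$ \emph{before} taking absolute values, and the cancellation encoded in Lemma \ref{integralfractional} yields the integrable factor $(t-s)^{(\beta-\alpha)/\alpha}$; for the difference part, the H\"older estimate of $q$ in the middle variable (Theorem \ref{Holder}) supplies the factor $(|x-z|^{\beta-\gamma}\wedge 1)$, which upgrades the singularity to $(t-s)^{(\beta-\gamma-\alpha)/\alpha}$ and, via the estimates (\ref{B1inequality})--(\ref{B4inequality}), produces exactly the exponent in (\ref{epsheat}). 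Neither Lemma \ref{integralfractional} nor Theorem \ref{Holder} appears in your argument, and without them the interchange (\ref{changeoflimit2}) and the bound (\ref{epsheat}) cannot be reached; the size estimate of Theorem \ref{thmq} alone is insufficient. (As a minor separate point, to justify Fubini at fixed $\eps$ you should use the crude bound coming from $\int_{|u|>\eps}|u|^{-1-\alpha}du\le c\eps^{-\alpha}$ together with $\int_{\Rd}p_z(t-s,x-z\pm e_iu)\,dz\le c$, not Lemma \ref{conv}(iii).)
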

\begin{proof}Let $\eps > 0$. We have
$$
\calL_{\eps} \varphi_y(t,x) = \int_0^{t} \int_{\Rd} \calL_{\eps}^x p_z(t-s,\cdot)(x-z) q(s,z,y) \, dz \, ds.
$$
By Lemma \ref{generatorpy1} and Theorem \ref{thmq} one easily gets
\begin{equation}
\label{changeoflimit1}
\lim_{\eps \to 0^+} \int_{\Rd} \calL_{\eps}^x p_z(t-s,\cdot)(x-z) q(s,z,y) \, dz  =
\int_{\Rd} \calL^x p_z(t-s,\cdot)(x-z) q(s,z,y) \, dz.
\end{equation}
The most difficult part of the proof is to justify
\begin{eqnarray}
\nonumber
&& \lim_{\eps \to 0^+} \int_0^{t} \int_{\Rd} \calL_{\eps}^x p_z(t-s,\cdot)(x-z) q(s,z,y) \, dz \, ds \\
\label{changeoflimit2}
&& = \int_0^{t} \lim_{\eps \to 0^+} \int_{\Rd} \calL_{\eps}^x p_z(t-s,\cdot)(x-z) q(s,z,y) \, dz \, ds.
\end{eqnarray}
We have
\begin{eqnarray*}
\calL_{\eps} \varphi_y(t,x) 
&=& \int_0^{t/2} \int_{\Rd} \calL_{\eps}^x p_z(t-s,\cdot)(x-z) q(s,z,y) \, dz \, ds\\
&+& \int_{t/2}^{t}  \int_{\Rd} \calL_{\eps}^x p_z(t-s,\cdot)(x-z) \, dz q(s,x,y) \, ds\\
&+&  \int_{t/2}^{t} \int_{\Rd}  \calL_{\eps}^x p_z(t-s,\cdot)(x-z)  (q(s,z,y)-q(s,x,y)) \, dz \, ds\\
&=& D(t,x,y)+E(t,x,y)+F(t,x,y).
\end{eqnarray*}
For $s \in (0,t/2)$ by Theorem \ref{thmq}, Lemma \ref{generatorpy1} and  (\ref{conv1}) we obtain 
\begin{eqnarray*} 
&&\int_{\Rd} \left|\calL_{\eps}^x p_z(t-s,\cdot)(x-z) q(s,z,y) \right|\, dz\\
&&\le c \sum_{m=1}^d \left[\prod_{\substack{i=1\\ i \ne m}}^d \rho_{\alpha}^0(t,x_i-y_i)\right]\\
&&  \times
\int_{\R} \rho_{0}^0(t-s,x_m-z_m) [ \rho_{0}^{\beta}(s,z_m-y_m) +\rho_{\beta}^0(s,z_m-y_m)] \, dz\\
&& \le c \sum_{m=1}^d \left[\prod_{\substack{i=1\\ i \ne m}}^d \rho_{\alpha}^0(t,x_i-y_i)\right]\\
&& \times  \left[(t - s)^{\frac{\beta-\alpha}{\alpha}}\rho_0^0(t,x_m-y_m) + s^{\frac{\beta-\alpha}{\alpha}}\rho_0^0(t,x_m-y_m) 
+ (t - s)^{-1}\rho_0^{\beta}(t,x_m-y_m)\right].
\end{eqnarray*}
It follows that 
\begin{eqnarray}
\nonumber
&& \lim_{\eps \to 0^+} \int_0^{t/2} \int_{\Rd} \calL_{\eps}^x p_z(t-s,\cdot)(x-z) q(s,z,y) \, dz \, ds\\ 
\label{D1formula} 
&& = \int_0^{t/2} \lim_{\eps \to 0^+} \int_{\Rd} \calL_{\eps}^x p_z(t-s,\cdot)(x-z) q(s,z,y) \, dz \, ds.
\end{eqnarray}
and
\begin{equation}
\label{D2formula}
D(t,x,y) \le c t^{-1} \prod_{i = 1}^d \rho_{\alpha}^0(t,x_i-y_i).
\end{equation}

For $s \in (t/2,t)$ by Theorem \ref{thmq} and Lemma \ref{integralfractional} we obtain
\begin{eqnarray*}
 \left| \int_{\Rd} \calL_{\eps}^x p_z(t-s,\cdot)(x-z) \, dz q(s,x,y) \right|
&\le& c  (t-s)^{{\frac{\beta-\alpha}{\alpha}}}
t^{-1} \prod_{i = 1}^d \rho_{\alpha}^0(t,x_i-y_i).
\end{eqnarray*}
It follows that 
\begin{eqnarray}
\nonumber
&&\lim_{\eps \to 0^+} \int_{t/2}^{t}  \int_{\Rd} \calL_{\eps}^x p_z(t-s,\cdot)(x-z) \, dz q(s,x,y) \, ds\\
\label{E1formula} 
&& = \int_{t/2}^{t} \lim_{\eps \to 0^+} \int_{\Rd} \calL_{\eps}^x p_z(t-s,\cdot)(x-z) \, dz q(s,x,y) \, ds.
\end{eqnarray}
and
\begin{equation}
\label{E2formula}
E(t,x,y) \le c t^{-1} \prod_{i = 1}^d \rho_{\alpha}^0(t,x_i-y_i).
\end{equation}

Now, we need to obtain some estimates which will be crucial in studying the most difficult term $F(t,x,y)$.
By Lemma \ref{generatorpy1} and Theorem \ref{Holder} there exists $c$ (not depending on $\eps$) such that for any $t \in (0,T]$ and $x, y \in \Rd$ we have
\begin{eqnarray*}
&&\int_{\Rd} \sum_{i=1}^d \left[
\int_{|w_i| > \eps} \left|\delta_{p_z}(t-s,x-z,w_i e_i) \right| |w_i|^{-1-\alpha} \, dw_i \left|q(s,z,y)  - q(s,x,y)\right| 
\right] dz\\
&\le& c  (t-s)^{-1} \int_{\Rd}  (|x-z|^{\beta - \gamma} \wedge 1) \left[\prod_{i = 1}^d \rho_{\alpha}^0(t-s,x_i-z_i)\right] \, dz  
\sum_{m=1}^d \left[\prod_{\substack{i=1\\ i \ne m}}^d \rho_{\alpha}^0(s,x_i-y_i)\right]\\
&\times&  [ \rho_{\gamma-\beta}^{\beta}(s,x_m-y_m) +\rho_{\gamma}^0(s,x_m-y_m)]\\
&+& c (t-s)^{-1} \int_{\Rd}  (|x-z|^{\beta - \gamma} \wedge 1) \left[\prod_{i = 1}^d \rho_{\alpha}^0(t-s,x_i-z_i)\right] 
\sum_{m=1}^d \left[\prod_{\substack{i=1\\ i \ne m}}^d \rho_{\alpha}^0(s,z_i-y_i)\right]\\
&\times&  [ \rho_{\gamma-\beta}^{\beta}(s,z_m-y_m) +\rho_{\gamma}^0(s,z_m-y_m)] \, dz\\
&=& B_1(s,t,x,y) + B_2(s,t,x,y).
\end{eqnarray*}
Clearly $(|x-z|^{\beta - \gamma} \wedge 1) \le \sum_{k=1}^d (|x_k-z_k|^{\beta - \gamma} \wedge 1)$. It follows that
\begin{eqnarray*}
&&(t-s)^{-1} \int_{\Rd}  (|x-z|^{\beta - \gamma} \wedge 1) \left[\prod_{i = 1}^d \rho_{\alpha}^0(t-s,x_i-z_i)\right] \, dz\\  
&\le& c \sum_{k=1}^d \int_{\R} \rho_{0}^{\beta -\gamma}(t-s,x_k-z_k) \, dz_k\\
&\le& c (t-s)^{\frac{\beta - \gamma -\alpha}{\alpha}}.
\end{eqnarray*}
Hence
\begin{eqnarray}
\nonumber
B_1(s,t,x,y)
&\le& c (t-s)^{\frac{\beta - \gamma -\alpha}{\alpha}}
\sum_{m=1}^d \left[\prod_{\substack{i=1\\ i \ne m}}^d \rho_{\alpha}^0(s,x_i-y_i)\right]\\
\label{B1inequality}
&\times&  [ \rho_{\gamma-\beta}^{\beta}(s,x_m-y_m) +\rho_{\gamma}^0(s,x_m-y_m)].
\end{eqnarray}
We also have
\begin{eqnarray*}
B_2(s,t,x,y)
&\le& c \sum_{m=1}^d \int_{\Rd} \left[\prod_{\substack{i=1\\ i \ne m}}^d \rho_{\alpha}^0(t-s,x_i-z_i) \rho_{\alpha}^0(s,z_i-y_i)\right]\\
&\times&   \rho_{0}^{\beta-\gamma}(t-s,x_m-z_m) 
[ \rho_{\gamma-\beta}^{\beta}(s,z_m-y_m) +\rho_{\gamma}^0(s,z_m-y_m)] \, dz\\
&+& c \sum_{m=1}^d \sum_{\substack{k=1\\ k \ne m}}^d
\int_{\Rd} \left[\prod_{\substack{i=1\\ i \ne m,k}}^d \rho_{\alpha}^0(t-s,x_i-z_i) \rho_{\alpha}^0(s,z_i-y_i)\right]\\
&\times&   \rho_{\alpha}^{0}(t-s,x_m-z_m) 
[ \rho_{\gamma-\beta}^{\beta}(s,z_m-y_m) +\rho_{\gamma}^0(s,z_m-y_m)]\\
&\times&   \rho_{0}^{\beta-\gamma}(t-s,x_k-z_k) \rho_{\alpha}^0(s,z_k-y_k) \, dz\\
&=& B_3(s,t,x,y) + B_4(s,t,x,y).
\end{eqnarray*}
By (\ref{conv1}),  we have 
\begin{eqnarray}
\nonumber
&& B_3(s,t,x,y)\\
\nonumber
&\le& c \sum_{m=1}^d \left[\prod_{\substack{i=1\\ i \ne m}}^d \rho_{\alpha}^0(t,x_i-y_i)\right]
\left[(t - s)^{\frac{2\beta-\gamma -\alpha}{\alpha}} s^{\frac{\gamma-\beta}{\alpha}}\rho_0^0(t,x_m-y_m) + s^{\frac{\beta-\alpha}{\alpha}}\rho_0^0(t,x_m-y_m) \right.\\
\nonumber
&& + (t - s)^{\frac{\beta-\gamma -\alpha}{\alpha}} s^{\frac{\gamma-\beta}{\alpha}}\rho_0^{\beta}(t,x_m-y_m)
+ s^{\frac{\gamma-\alpha}{\alpha}}\rho_0^{\beta - \gamma}(t,x_m-y_m)\\
\label{B3inequality}
&& \left.
+ (t - s)^{\frac{\beta-\gamma -\alpha}{\alpha}} s^{\frac{\gamma}{\alpha}}\rho_0^0(t,x_m-y_m)\right].
\end{eqnarray}
By (\ref{conv1}), we also have
\begin{eqnarray}
\nonumber
&& B_4(s,t,x,y)\\
\nonumber
&\le& c \sum_{m=1}^d \sum_{\substack{k=1\\ k \ne m}}^d
\left[\prod_{\substack{i=1\\ i \ne m,k}}^d \rho_{\alpha}^0(t,x_i-y_i)\right]\\
\nonumber
&\times&\left[\left[
(t-s)^{\frac{\beta}{\alpha}} s^{\frac{\gamma - \beta}{\alpha}}
+ (t-s)^{\frac{\alpha}{\alpha}} s^{\frac{\gamma - \alpha}{\alpha}}
+s^{\frac{\gamma}{\alpha}}
\right]\rho_{0}^{0}(t,x_m-y_m) 
+ s^{\frac{\gamma-\beta}{\alpha}} \rho_{0}^{\beta}(t,x_m-y_m) 
\right]\\
\label{B4inequality}
&\times&\left[\left[
(t-s)^{\frac{\beta-\gamma-\alpha}{\alpha}} s^{\frac{\alpha}{\alpha}}
+s^{\frac{\beta-\gamma}{\alpha}}
\right]\rho_{0}^{0}(t,x_k-y_k) 
+ \rho_{0}^{\beta-\gamma}(t,x_k-y_k) 
\right]
\end{eqnarray}

By (\ref{B1inequality}-\ref{B4inequality}) and (\ref{conv0}),  we get 
\begin{eqnarray}
&&\lim_{\eps \to 0^+} \int_{t/2}^{t} \int_{\Rd}  \calL_{\eps}^x p_z(t-s,\cdot)(x-z)  (q(s,z,y)-q(s,x,y)) \, dz \, ds\nonumber\\
&&= \int_{t/2}^{t} \lim_{\eps \to 0^+} \int_{\Rd}  \calL_{\eps}^x p_z(t-s,\cdot)(x-z)  (q(s,z,y)-q(s,x,y)) \, dz \, ds. \label{F1formula}
\end{eqnarray}
and
\begin{equation}
\label{F2formula}
F(t,x,y) \le c t^{\frac{-\alpha+\gamma-\beta}{\alpha}} \prod_{i = 1}^d \rho_{\alpha}^0(t,x_i-y_i).
\end{equation} 

By (\ref{D1formula}), (\ref{E1formula}), (\ref{F1formula}) we get (\ref{changeoflimit2}). We also get continuity $t \to \calL^x \varphi_y(t,x)$. By (\ref{changeoflimit1}) and (\ref{changeoflimit2}) we obtain (\ref{genvarphi1}).
Using (\ref{D2formula}), (\ref{E2formula}), (\ref{F2formula}), Lemma \ref{generatorpy1} and formula (\ref{pA}) we get (\ref{epsheat}).
\end{proof}

The next result is an analogue of \cite[Theorem 4.1]{CZ}. Its proof is almost the same as the proof of \cite[Theorem 4.1]{CZ} and is omitted.
\begin{proposition}
\label{maximumprinciple}
Let $u(t,x) \in C_b([0,T]\times\Rd)$ with
\begin{equation}
\label{mp1}
\lim_{t \to 0^+} \sup_{x \in \Rd} |u(t,x) - u(0,x)| = 0.
\end{equation}
Assume that 
\begin{equation}
\label{mp2}
t \to \calL u(t,x) \quad \text{is continuous on} \quad (0,T] \quad \text{for each} \quad x \in \Rd
\end{equation}
and for any $\eps \in (0,1)$ and some $\gamma \in ((\alpha - 1)\vee 0,1)$
\begin{equation}
\label{mp3}
\sup_{t \in (\eps,T)} |u(t,x) - u(t,x')| \le K_{\eps} |x - x'|^{\gamma}, \quad x,x' \in \Rd.
\end{equation}
If $u$ satisfies
\begin{equation}
\label{mp4}
\frac{\partial}{\partial t} u(t,x) = \calL u(t,x), \quad t \in (0,T], \, x \in \Rd,
\end{equation}
then 
\begin{equation}
\label{mp5}
\sup_{t \in (0,T)} \sup_{x \in \Rd} u(t,x) \le \sup_{x \in \Rd} u(0,x).
\end{equation}
\end{proposition}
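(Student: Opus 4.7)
I would proceed by contradiction: assume $u(t_0,x_0)>M:=\sup_{x\in\Rd}u(0,x)$ at some $(t_0,x_0)\in(0,T)\times\Rd$ (note $M<\infty$ since $u\in C_b$). The standard obstacle is that the spatial domain is unbounded, so a maximum need not be attained; I would overcome this by subtracting a bump function $\delta\chi(x)$ with $\chi(x)\to\infty$ as $|x|\to\infty$, plus a linear-in-$t$ correction $\eta t$, and applying a classical non-local parabolic maximum principle to
\[
v_\eta(t,x):=u(t,x)-\eta t-\delta\chi(x).
\]

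The first step is to construct a smooth $\chi:\Rd\to[0,\infty)$ with $\chi(x)\to\infty$ and $\|\calL\chi\|_\infty\le C_0<\infty$, and to set $\delta=\eta/(2C_0)$. The additive ansatz $\chi(x)=\sum_{i=1}^d(1+x_i^2)^{s/2}$ with $s\in(0,\alpha)$ matches the cylindrical structure of $\calL$ and reduces the bound to the standard one-dimensional estimate $\sup_{w\in\R}|(-\partial_w^2)^{\alpha/2}(1+w^2)^{s/2}|<\infty$. The boundedness of $u$ together with $\chi(x)\to\infty$ then forces $v_\eta(t,x)\to-\infty$ uniformly in $t$ as $|x|\to\infty$, so by continuity of $v_\eta$ on $[0,T]\times\Rd$ (which uses (\ref{mp1}) at the $t=0$ boundary), the supremum of $v_\eta$ is attained at some $(t_*,x_*)\in[0,T]\times\Rd$.

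I would then split on whether $t_*=0$ or $t_*>0$. If $t_*=0$, then $v_\eta(t_*,x_*)\le u(0,x_*)\le M$, so $u\le M+\eta t+\delta\chi$ on $[0,T]\times\Rd$; evaluating at $(t_0,x_0)$ and sending $\eta\downarrow 0$ (hence $\delta\downarrow 0$) yields $u(t_0,x_0)\le M$, contradicting the standing assumption. If $t_*>0$, then the spatial global maximum at $x_*$ gives $\delta_{v_\eta}(t_*,x_*,z_ie_i)\le 0$ for every $i$ and $z_i$, and since $\sigma_i(x_*)>0$ one concludes $\calL v_\eta(t_*,x_*)\le 0$. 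Combining the one-sided time-derivative inequality $\partial_tv_\eta(t_*,x_*)\ge 0$ (equality if $t_*\in(0,T)$, $\ge 0$ if $t_*=T$), the PDE (\ref{mp4}), and the identity $\calL v_\eta=\calL u-\delta\calL\chi$ (since $\calL$ annihilates $x$-constants) yields
\[
0\le\partial_tu(t_*,x_*)-\eta=\calL u(t_*,\cdot)(x_*)-\eta\le\delta\calL\chi(x_*)-\eta\le\delta C_0-\eta=-\eta/2,
\]
a contradiction.

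The main obstacle is making all manipulations at the extremal point $(t_*,x_*)$ rigorous. For $\alpha\ge 1$ the kernel $|z_i|^{-1-\alpha}$ is non-integrable near zero, and the principal value $\calL u(t_*,\cdot)(x_*)$ converges as a finite number only thanks to hypothesis (\ref{mp3}) with $\gamma>(\alpha-1)\vee 0$ (applied with any $\eps<t_*$) together with the symmetry of the kernel in $z_i\mapsto -z_i$. The continuity hypothesis (\ref{mp2}) combined with (\ref{mp4}) ensures $u$ is $C^1$ in $t$ on $(0,T]$, so the time-derivative test at $t_*$ is legitimate. Finally, the bound $\|\calL\chi\|_\infty<\infty$ must be verified carefully (splitting $\int_{|z_i|\le 1}$, where one uses that $(1+w^2)^{s/2}$ is smooth with bounded second derivative, from $\int_{|z_i|>1}$, where one uses the growth rate $|w|^s$ with $s<\alpha$, and then summing over $i$ against $\|\sigma_i\|_\infty\le b_2^\alpha$); this is routine but cannot be omitted, as the whole perturbation argument hinges on this single constant $C_0$.
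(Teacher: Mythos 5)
Your proof is correct, but it is not the route the paper takes: the paper offers no written proof, deferring to \cite[Theorem 4.1]{CZ}, whose argument is built around the hypotheses (\ref{mp1})--(\ref{mp3}) (in particular the H{\"o}lder condition (\ref{mp3}) enters there) and copes with the possible non-attainment of the supremum without introducing an unbounded barrier, essentially by tilting with $\eta t$ and working at (almost-)maximizing points. You instead force attainment by subtracting the coercive barrier $\delta\chi$, $\chi(x)=\sum_{i=1}^d(1+x_i^2)^{s/2}$ with $\calL\chi$ bounded above, and then run the classical sign argument at the attained maximum: every second difference of $v_\eta(t_*,\cdot)$ is $\le 0$, so each truncated operator $\calL_\eps v_\eta(t_*,\cdot)(x_*)\le 0$, and the limit exists because $\calL u(t_*,\cdot)(x_*)$ does (this is implicit in (\ref{mp2}) and (\ref{mp4})) and $\calL\chi(x_*)$ is absolutely convergent. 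This is a legitimate, more elementary alternative, and it in fact proves a slightly stronger statement: your argument never uses (\ref{mp1}), (\ref{mp2}) or (\ref{mp3}), only $u\in C_b([0,T]\times\Rd)$ and (\ref{mp4}); by contrast, your stated reason for invoking (\ref{mp3}) --- that it makes the principal value converge --- is not accurate, since a bound $|\delta_u(t,x,z_ie_i)|\le 2K_\eps|z_i|^\gamma$ with $\gamma<1$ is not integrable against $|z_i|^{-1-\alpha}$ unless $\gamma>\alpha$; well-definedness of $\calL u$ is simply part of the hypotheses. One computational point to tighten: for the tail of $\calL\chi$, the split at $|z_i|=1$ combined with the crude growth bound $|w|^s$ gives an estimate of order $1+|w_i|^s$, which is not uniform in $w_i$; either choose $s\in(0,\alpha\wedge 1)$ and use subadditivity of $\phi(w)=(1+w^2)^{s/2}$ (so $|\delta_\phi(w,z)|\le 2\phi(z)$ on $|z|>1$, whose integral is finite since $s<\alpha$), or split at $|z|\approx|w|$, which yields the decay $|w|^{s-\alpha}$; with either fix, $\sup\calL\chi\le C_0<\infty$ and the rest of your argument goes through verbatim.
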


\begin{lemma}
\label{interchange}
Let $x,y \in \R^d$. Then the mapping $t \to \varphi_y(t,x)$ is absolutely continuous on $(0,T]$. For any $t \in (0,T)$ we have
\begin{equation}
\label{tvarphi}
\frac{\partial \varphi_y}{\partial t}(t,x) = q(t,x,y) +
\int_0^t \int_{\Rd} \calL^z p_z(t-s,\cdot)(x-z) q(s,z,y) \, dz \, ds.
\end{equation}
\end{lemma}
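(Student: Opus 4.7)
The plan is to approximate $\varphi_y(t,x)$ by the truncation
$$
\varphi_y^{\eta}(t,x) := \int_0^{t-\eta} \int_{\Rd} p_z(t-s,x-z)\, q(s,z,y)\, dz\, ds, \qquad 0 < \eta < t,
$$
differentiate it directly via Leibniz, and pass $\eta \to 0^+$. On the region $s \in [0,t-\eta]$ the singular boundary $s = t$ (where $p_z(t-s,\cdot)$ becomes delta-like) is avoided, so the uniform bounds from \eqref{dert} and Theorem \ref{thmq} justify the interchange of $\partial_t$ with the $dz\,ds$ integral. Using $\partial_t p_z(t-s,\cdot) = \calL^z p_z(t-s,\cdot)$ from \eqref{pyparabolic}, this yields
$$
\partial_t \varphi_y^{\eta}(t,x) = A^{\eta}(t,x) + B^{\eta}(t,x),
$$
where $A^{\eta}(t,x) := \int_{\Rd} p_z(\eta,x-z)\, q(t-\eta,z,y)\, dz$ is the boundary contribution from the upper limit and $B^{\eta}(t,x) := \int_0^{t-\eta} \int_{\Rd} \calL^z p_z(t-s,\cdot)(x-z)\, q(s,z,y)\, dz\, ds$.

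For $A^{\eta}$ I would split
$$
A^{\eta}(t,x) - q(t,x,y) = \int_{\Rd} p_z(\eta,x-z)[q(t-\eta,z,y)-q(t,x,y)]\, dz + q(t,x,y) \left[\int_{\Rd} p_z(\eta,x-z)\, dz - 1\right].
$$
Comparing with $p_x(\eta,x-z)$ (for which $\int_{\Rd} p_x(\eta,x-z)\, dz = 1$) via Corollary \ref{productgt2} gives $|\int_{\Rd} p_z(\eta,x-z)\,dz - 1| \le c\,\eta^{\beta/\alpha}$, handling the second term. For the first term, $p_z(\eta,x-z)$ acts as an approximate identity in $z$ centered at $x$ as $\eta \to 0^+$, so the joint continuity of $q$ from Theorem \ref{thmq} forces this piece to $0$. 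Hence $A^{\eta}(t,x) \to q(t,x,y)$.

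For $B^{\eta}$ the key manipulation is the algebraic identity $\calL^z p_z(t-s,\cdot)(x-z) = \calL^x p_z(t-s,\cdot)(x-z) - q_0(t-s,x,z)$, which is merely a rewriting of the definition of $q_0$. The $\calL^x$-piece converges as $\eta \to 0^+$ to $\calL^x \varphi_y(t,x)$ by \eqref{genvarphi1} in Lemma \ref{fractionalheat}, whose proof already secured the absolute convergence of that iterated integral via the bounds \eqref{B1inequality}--\eqref{B4inequality}. The $q_0$-piece converges to $\int_0^t \int q_0(t-s,x,z) q(s,z,y)\, dz\, ds$, which by the Picard identity $q - q_0 = q_0 \star q$ (absolutely convergent by Theorem \ref{thmq}) equals $q(t,x,y) - q_0(t,x,y)$. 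Combining the two limits, $\partial_t \varphi_y^{\eta}(t,x) \to q_0(t,x,y) + \calL^x \varphi_y(t,x)$, and reversing the same decomposition shows this equals the claimed right-hand side of \eqref{tvarphi}, namely $q(t,x,y) + \int_0^t \int \calL^z p_z(t-s,\cdot)(x-z) q(s,z,y)\, dz\, ds$.

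To conclude absolute continuity and \eqref{tvarphi}, I fix $0 < t_1 < t_2 \le T$ and, for $\eta < t_1$, apply the fundamental theorem of calculus to the $C^1$ function $\varphi_y^{\eta}$ on $[t_1,t_2]$, obtaining $\varphi_y^{\eta}(t_2,x) - \varphi_y^{\eta}(t_1,x) = \int_{t_1}^{t_2} \partial_r \varphi_y^{\eta}(r,x)\, dr$. The left side converges pointwise to $\varphi_y(t_2,x) - \varphi_y(t_1,x)$; on the right side, dominated convergence produces the integral of the limit once an $\eta$-uniform integrable majorant for $|\partial_r \varphi_y^{\eta}(r,x)|$ on $[t_1,t_2]$ is in hand, after which the continuity of the limit in $r$ (inherited from that of $q$ and from the last statement of Lemma \ref{fractionalheat}) upgrades the integral identity to the differential one. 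The main obstacle will be producing this $\eta$-uniform majorant: one must revisit the estimates leading to \eqref{epsheat} and the bounds \eqref{B1inequality}--\eqref{B4inequality} underlying Lemma \ref{fractionalheat}, and check that they survive the restriction $s \le r - \eta$ with constants independent of $\eta$, so that dominated convergence can be applied uniformly in $\eta$.
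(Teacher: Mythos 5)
Your proposal is correct in substance, but it is organized quite differently from the paper's proof, so a comparison is in order. The paper works with the difference quotient $(\varphi_y(t+h,x)-\varphi_y(t,x))/h$ directly: it isolates the boundary contribution $\frac1h\int_t^{t+h}$ (your $A^{\eta}$, treated the same way via Corollary \ref{productgt2} and the continuity of $q$), handles $s\in(0,t/2)$ by the crude bound (\ref{dert}), and near $s=t$ uses the mean value theorem in $t$ together with the splitting $q(s,z,y)=q(s,x,y)+[q(s,z,y)-q(s,x,y)]$, estimating the difference term by Theorem \ref{Holder} and the remaining term by the cancellation identity $\int_{\Rd}\calL^z p_z\,dz=-\int_{\Rd}q_0\,dz+\int_{\Rd}\calL^x p_z\,dz$ with Lemma \ref{integralfractional}. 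You instead truncate at $s\le t-\eta$, differentiate by Leibniz, rewrite $\calL^z p_z=\calL^x p_z-q_0$, and outsource the hard cancellation estimates by quoting (\ref{genvarphi1}) from Lemma \ref{fractionalheat} (legitimate, since that lemma is proved beforehand and independently), finishing with the Picard identity (\ref{qself}); the paper deploys these two identities only afterwards, in Proposition \ref{parabolic1}, so your route in effect merges part of that computation into the present lemma. What your approach buys is a cleaner passage from the integral identity (FTC plus dominated convergence) to absolute continuity and to (\ref{tvarphi}), at the price of re-deriving the limit in the equivalent form $q_0(t,x,y)+\calL^x\varphi_y(t,x)$ and converting back. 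The one point you leave open, the $\eta$-uniform majorant for $|\partial_r\varphi_y^{\eta}(r,x)|$ on $[t_1,t_2]$, is genuinely available and is not an obstruction: for the boundary term, Theorem \ref{thmq} gives $|A^{\eta}(r,x)|\le c(t_1)$ once $\eta<t_1/2$; for $B^{\eta}$, split the $s$-integral at $r/2$ and note that all bounds in the proof of Lemma \ref{fractionalheat} (the $D$-term bound for $s<r/2$, Lemma \ref{integralfractional} applied with $q(s,x,y)$, and (\ref{B1inequality})--(\ref{B4inequality}) for the H{\"o}lder difference) are integrals of nonnegative, $\eta$-independent majorants of the inner $dz$-integrals, hence only decrease under the restriction $s\le r-\eta$; the $q_0$-piece is likewise dominated uniformly since $\int_{\Rd}|q_0(r-s,x,z)|\,dz\le c(r-s)^{(\beta-\alpha)/\alpha}$ by (\ref{q0}) and (\ref{conv0}). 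With that observation spelled out, your argument is complete.
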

\begin{proof}
Let $h > 0$ be such that $t + h < T$. We have
\begin{eqnarray*}
&&\frac{\varphi_y(t+h,x)-\varphi_y(t,x)}{h}\\
&& = \frac{1}{h} \int_0^{t+h} \int_{\Rd} p_z(t+h-s,x-z) q(s,z,y) \, dz \, ds\\ 
&& - \frac{1}{h} \int_0^{t} \int_{\Rd} p_z(t-s,x-z) q(s,z,y) \, dz \, ds\\
&& = \frac{1}{h} \int_0^{t+h} \int_{\Rd} p_z(t+h-s,x-z) q(s,z,y) \, dz \, ds\\ 
&& - \frac{1}{h} \int_0^{t} \int_{\Rd} p_z(t+h-s,x-z) q(s,z,y) \, dz \, ds\\
&& + \frac{1}{h} \int_0^{t} \int_{\Rd} p_z(t+h-s,x-z) q(s,z,y) \, dz \, ds\\ 
&& - \frac{1}{h} \int_0^{t} \int_{\Rd} p_z(t-s,x-z) q(s,z,y) \, dz \, ds\\
&& = \frac{1}{h} \int_t^{t+h} \int_{\Rd} p_z(t+h-s,x-z) q(s,z,y) \, dz \, ds\\
&& + \int_0^{t} \int_{\Rd} \frac{p_z(t+h-s,x-z)-p_z(t-s,x-z)}{h} q(s,z,y) \, dz \, ds\\
&& = \text{I} + \text{II}. 
\end{eqnarray*}

After change of variables $t+h-s=u$ we have

\begin{eqnarray*}\text{I}&=& \frac{1}{h} \int_0^{h} \int_{\Rd} p_z(u,x-z) q(t+h-u,z,y) \, dz \, du\\&= &
\frac{1}{h} \int_0^{h} \int_{\Rd} (p_z(u,x-z)-p_x(u,x-z)) q(t+h-u,z,y) \, dz \, du \\&+ &
\frac{1}{h} \int_0^{h} \int_{\Rd} p_x(u,x-z) q(t+h-u,z,y) \, dz \, du\\&= &\text{I}_1 + \text{I}_2. \end{eqnarray*}

By Theorem \ref{thmq} $,\sup_{u\le h, z\in \R^d } q(t+h-u,z,y)\le M<\infty$. Moreover,  from Corollary \ref{productgt2}, 
$$|p_z(u,x-z)-p_x(u,x-z)| \le c p_x(u,x-z) (|x-z|^\beta \wedge 1).$$
Hence, 
\begin{equation}
\label{I1}
\limsup_{h \to 0^+} |\text{I}_1| \le cM \limsup_{h \to 0^+} \frac{1}{h} \int_0^{h} \int_{\Rd} p_x(u,x-z)(|x-z|^\beta \wedge 1)  \, dz \, du=0 .
\end{equation}
 Next, by  Theorem \ref{thmq}, the function  $q(s,z,y)$ is  continuous and bounded on $[t, T]\times \R^d$, as a function of $s$ and $z$. Since the measures $\mu_u(dz)= p_x(u,x-z)dz$ converge weakly to $\delta_x$ as $u \to 0^+$,  we obtain 
%
\begin{equation}
\label{I}
\lim_{h \to 0^+} \text{I}_2 = q(t,x,y).
\end{equation}
We have
\begin{eqnarray*}
\text{II} &=&
\int_0^{t/2} \int_{\Rd} \frac{p_z(t+h-s,x-z)-p_z(t-s,x-z)}{h} q(s,z,y) \, dz \, ds\\
&+& \int_{t/2}^{t} \int_{\Rd} \frac{p_z(t+h-s,x-z)-p_z(t-s,x-z)}{h} q(s,z,y) \, dz \, ds\\
&=& \text{III} + \text{IV}. 
\end{eqnarray*}

By estimates of $\frac{\partial}{\partial t} p_z(t-s,x-z)$ following from (\ref{dert}) and Theorem \ref{thmq} we get
\begin{equation}
\label{III}
\lim_{h \to 0^+} \text{III} = \int_0^{t/2} \int_{\Rd} \frac{\partial}{\partial t} p_z(t-s,x-z) q(s,z,y) \, dz \, ds.
\end{equation}
We have
\begin{eqnarray*}
\text{IV} &=&
\int_{t/2}^{t} \int_{\Rd} \frac{p_z(t+h-s,x-z)-p_z(t-s,x-z)}{h} (q(s,z,y)-q(s,x,y)) \, dz \, ds\\
&+& \int_{t/2}^{t} \int_{\Rd} \frac{p_z(t+h-s,x-z)-p_z(t-s,x-z)}{h} \, dz q(s,x,y) \, ds\\
&=& \text{V} + \text{VI}. 
\end{eqnarray*}

Note that for $h > 0$, $s \in (t/2,t)$, $\gamma \in (0,\beta)$, $x, y, z \in \Rd$, by Theorem \ref{Holder}  and the estimates of $\frac{\partial}{\partial t} p_z(t-s,x-z)$,  we obtain
\begin{eqnarray*}
&& \left|\frac{p_z(t+h-s,x-z)-p_z(t-s,x-z)}{h}\right| |(q(s,z,y)-q(s,x,y))|\\
&\le& B (|x-z|^{\beta-\gamma} \wedge 1) \left|\frac{\partial}{\partial t} p_z(t+\theta h-s,x-z)\right|\\
&\le& c B (|x-z|^{\beta-\gamma} \wedge 1) (t - s)^{-1} \prod_{i = 1}^d \rho_{\alpha}^0(t-s,x_i-z_i),
\end{eqnarray*}
where $B = B(t,\alpha,d,b_1,b_2,b_3,\beta,\gamma) \in (0,\infty)$ and $\theta = \theta(s,t,h,\alpha,d,b_1,b_2,b_3,\beta,\gamma,x,z) \in (0,1)$. We also have
\begin{eqnarray*}
&& \int_{\Rd} (|x-z|^{\beta-\gamma} \wedge 1) (t - s)^{-1} \prod_{i = 1}^d \rho_{\alpha}^0(t-s,x_i-z_i) \, dz\\
&\le& c \sum_{i = 1}^d \int_{\R} \rho_{0}^{\beta - \gamma}(t-s,x_i-z_i) \, dz_i\\
&\le& c (t - s)^{\frac{\beta-\gamma-\alpha}{\alpha}}.
\end{eqnarray*}
It follows that 
\begin{equation}
\label{V}
\lim_{h \to 0^+} \text{V} = \int_{t/2}^{t} \int_{\Rd} \frac{\partial}{\partial t} p_z(t-s,x-z) (q(s,z,y)-q(s,x,y)) \, dz \, ds.
\end{equation}

Note that for $h > 0$, $s \in (t/2,t)$, $x, z \in \Rd$ we have
\begin{eqnarray*}
&& \frac{1}{h} \left(\int_{\Rd} p_z(t+h-s,x-z) \, dz - \int_{\Rd} p_z(t-s,x-z) \, dz\right)\\
&=& \frac{\partial}{\partial t} \int_{\Rd} p_z(t+\theta h-s,x-z) \, dz\\
&=& \int_{\Rd} \frac{\partial}{\partial t} p_z(t+\theta h-s,x-z) \, dz,
\end{eqnarray*}
where $\theta = \theta(s,t,h,\alpha,d,b_1,b_2,b_3,\beta,\gamma,x) \in (0,1)$.

Using this, (\ref{pyparabolic}) and the definition of $q_0(t,x,y)$ we get
\begin{eqnarray*}
&& \int_{\Rd} \frac{p_z(t+h-s,x-z)-p_z(t-s,x-z)}{h} \, dz\\
&=& \int_{\Rd} \calL^z p_z(t+\theta h-s,\cdot)(x-z) \, dz\\
&=& - \int_{\Rd} q_0(t+\theta h-s,x,z) \, dz + \int_{\Rd} \calL^x p_z(t+\theta h-s,\cdot)(x-z) \, dz.
\end{eqnarray*}
By (\ref{q0}) and (\ref{conv0}),  we have 
$$
\left|\int_{\Rd} q_0(t+\theta h-s,x,z) \, dz \right| \le c (t-s)^{\frac{\beta-\alpha}{\alpha}}.
$$
By Lemma \ref{integralfractional} we get
$$
\left| \int_{\Rd} \calL^x p_z(t+\theta h-s,\cdot)(x-z) \, dz \right| \le c (t-s)^{\frac{\beta-\alpha}{\alpha}}.
$$
It follows that 
\begin{equation}
\label{VI}
\lim_{h \to 0^+} \text{VI} = \int_{t/2}^{t} \int_{\Rd} \frac{\partial}{\partial t} p_z(t-s,x-z) \, dz q(s,x,y) \, ds.
\end{equation}
By (\ref{I1}-\ref{VI}) we obtain 
$$
\lim_{h \to 0^+} \frac{\varphi_y(t+h,x)-\varphi_y(t,x)}{h} = 
q(t,x,y) + \int_0^t \int_{\Rd} \calL^z p_z(t-s,\cdot)(x-z) q(s,z,y) \, dz \, ds.
$$
The proof of the analogous result for $\lim_{h \to 0^-}$ is very similar and it is omitted. 
\end{proof}

\begin{proposition}\label{parabolic1}For all $t \in (0,\infty)$ and $x,y \in \R^d$ we have
\begin{equation*}
\frac{\partial}{\partial t}p^A(t,x,y) = \calL p^A(t,\cdot,y)(x).
\end{equation*}
\end{proposition}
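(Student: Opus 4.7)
The plan is to decompose $p^A$ via (\ref{pA}) as $p^A(t,x,y) = p_y(t,x-y) + \varphi_y(t,x)$ and verify the identity $\partial_t p^A = \calL p^A$ by treating each piece separately. For the first piece, the frozen heat equation (\ref{pyparabolic}) gives $\partial_t p_y(t,x-y) = \calL^y p_y(t,\cdot)(x-y)$, while the definition of $q_0$ yields directly
$$
\calL p_y(t,\cdot -y)(x) = \calL^x p_y(t,\cdot)(x-y) = \calL^y p_y(t,\cdot)(x-y) + q_0(t,x,y).
$$
For the second piece, Lemma \ref{interchange} supplies the time derivative
$$
\partial_t \varphi_y(t,x) = q(t,x,y) + \int_0^t \int_{\Rd} \calL^z p_z(t-s,\cdot)(x-z)\, q(s,z,y)\, dz\, ds,
$$
and Lemma \ref{fractionalheat} supplies the action of the operator
$$
\calL^x \varphi_y(t,x) = \int_0^t \int_{\Rd} \calL^x p_z(t-s,\cdot)(x-z)\, q(s,z,y)\, dz\, ds.
$$

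The key reduction is to subtract these two formulas and use the identity $(\calL^x - \calL^z) p_z(t-s,\cdot)(x-z) = q_0(t-s,x,z)$, which is again the definition of $q_0$. Assembling everything, the desired equality $\partial_t p^A(t,x,y) = \calL p^A(t,\cdot,y)(x)$ collapses to the single algebraic relation
$$
q(t,x,y) = q_0(t,x,y) + \int_0^t \int_{\Rd} q_0(t-s,x,z)\, q(s,z,y)\, dz\, ds,
$$
that is, the fixed-point equation satisfied by the Levi-type perturbation series. This relation follows at once from $q = \sum_{n \ge 0} q_n$ together with the recursion (\ref{defqn}): one has $\sum_{n \ge 1} q_n(t,x,y) = \int_0^t \int_{\Rd} q_0(t-s,x,z) \sum_{n \ge 0} q_n(s,z,y)\, dz\, ds$, where the interchange of summation and integration is legitimate by the absolute and locally uniform convergence established in Theorem \ref{thmq} (estimate (\ref{qn})).

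Finally, the proposition asserts the identity for all $t \in (0,\infty)$, whereas our construction has been carried out on $(0,T]$ for a fixed but arbitrary $T$; since every earlier result (Theorem \ref{thmq}, Lemmas \ref{interchange} and \ref{fractionalheat}) holds for any such $T$, for a given $t > 0$ we simply fix $T > t$ and apply the argument above. I do not expect a serious obstacle: all the genuinely delicate analytic work, namely pointwise existence of $\calL^x \varphi_y$ and the exchange of $\partial_t$ with the convolution integral, has already been performed in Lemmas \ref{fractionalheat} and \ref{interchange}, so what remains here is essentially the algebraic bookkeeping outlined above.
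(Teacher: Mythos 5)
Your proposal is correct and follows essentially the same route as the paper: the paper's proof likewise combines (\ref{pyparabolic}), Lemma \ref{interchange}, formula (\ref{genvarphi1}) from Lemma \ref{fractionalheat}, the definition of $q_0$ as $(\calL^x-\calL^y)p_y(t,\cdot)(x-y)$, and the fixed-point identity (\ref{qself}) for $q$, differing only in that it chains the equalities rather than subtracting the two formulas. The justification of (\ref{qself}) via the recursion (\ref{defqn}) and the absolute, locally uniform convergence from Theorem \ref{thmq}, and the remark about fixing $T>t$, match the paper's (implicit) treatment.
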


\begin{proof}
By the definition of $q(t,x,y)$ we obtain 
\begin{equation}
\label{qself}
q(t,x,y) = q_0(t,x,y) +
\int_0^t \int_{\Rd} q_0(t-s,x,z) q(s,z,y) \, dz \, ds.
\end{equation}

Using (\ref{pA}), (\ref{pyparabolic}), Lemma \ref{interchange} and the definition of $q_0(t,x,y)$ we obtain
\begin{eqnarray*}
&&\frac{\partial p^A}{\partial t}(t,x,y) = \frac{\partial p_y}{\partial t}(t,x-y) + \frac{\partial \varphi_y}{\partial t}(t,x)\\
&& = \calL^y p_y(t,x-y) + q(t,x,y) +
\int_0^t \int_{\Rd} \calL^z p_z(t-s,\cdot)(x-z) q(s,z,y) \, dz \, ds\\
&& = \calL^x p_y(t,x-y) - q_0(t,x,y) + q(t,x,y) +
\int_0^t \int_{\Rd} \calL^z p_z(t-s,\cdot)(x-z) q(s,z,y) \, dz \, ds.
\end{eqnarray*}
By (\ref{qself}) this is equal to
\begin{eqnarray*}
&& \calL^x p_y(t,x-y) + \int_0^t \int_{\Rd} \left(q_0(t-s,x,z) + \calL^z p_z(t-s,\cdot)(x-z)\right) q(s,z,y) \, dz \, ds\\
&& = \calL^x p_y(t,x-y) + \int_0^t \int_{\Rd} \calL^x p_z(t-s,\cdot)(x-z) q(s,z,y) \, dz \, ds.
\end{eqnarray*}
By (\ref{genvarphi1}) and (\ref{pA}) this is equal to $\calL^x p^A(t,\cdot,y)(x)$, which completes the proof.
\end{proof}

\begin{lemma}
\label{Fubinisemigroup}
For any bounded Borel $f: \Rd \to \R$, $t \in (0,\infty)$ and $x \in \Rd$ we have
$$
\calL (P_t^A f)(x) = \frac{\partial}{\partial t} P_t^A f(x).
$$
\end{lemma}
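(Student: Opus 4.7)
The plan is to combine Proposition \ref{parabolic1}, which gives the pointwise identity $\frac{\partial}{\partial t}p^A(t,x,y) = \calL p^A(t,\cdot,y)(x)$, with two interchange-of-integral arguments: one differentiating under the integral defining $P_t^A f$, and one interchanging the singular operator $\calL$ with that integral. The uniform bound (\ref{epsheat}) from Lemma \ref{fractionalheat} will serve as the dominating function for both interchanges.

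First I would handle the time derivative. The difference quotient
$$
\frac{P_{t+h}^A f(x) - P_t^A f(x)}{h} = \int_{\Rd} \frac{p^A(t+h,x,y) - p^A(t,x,y)}{h}\, f(y)\, dy
$$
converges pointwise in $y$ to $\frac{\partial}{\partial t} p^A(t,x,y)$ by Proposition \ref{parabolic1}. By the mean value theorem combined with the identity $\frac{\partial}{\partial t} p^A(s,x,y) = \calL p^A(s,\cdot,y)(x)$ and (\ref{epsheat}), the difference quotient is dominated, for $h$ small, by $c \|f\|_\infty \sup_{s \in [t/2, 2t]} s^{(\gamma-\alpha-\beta)/\alpha} \prod_{i=1}^d \rho_\alpha^0(s,x_i-y_i)$; by (\ref{xxp}) and monotonicity of the envelope, this can be replaced (up to a constant) by $c \|f\|_\infty \prod_{i=1}^d \rho_\alpha^0(t,x_i-y_i)$, which is integrable in $y$ by (\ref{conv0}). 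Dominated convergence yields
$$
\frac{\partial}{\partial t} P_t^A f(x) = \int_{\Rd} \calL p^A(t,\cdot,y)(x)\, f(y)\, dy.
$$

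Next I would tackle the spatial side. For each $\eps > 0$ the truncated operator $\calL_\eps$ is a bounded integral operator on bounded functions, so Fubini gives
$$
\calL_\eps\bigl(P_t^A f\bigr)(x) = \int_{\Rd} \calL_\eps p^A(t,\cdot,y)(x)\, f(y)\, dy.
$$
By (\ref{epsheat}) the integrand is dominated, uniformly in $\eps$, by $c \|f\|_\infty t^{(\gamma-\alpha-\beta)/\alpha} \prod_{i=1}^d \rho_\alpha^0(t, x_i - y_i)$, which is integrable in $y$. Pointwise in $y$, Proposition \ref{parabolic1} guarantees $\calL_\eps p^A(t,\cdot,y)(x) \to \calL p^A(t,\cdot,y)(x)$ as $\eps \to 0^+$. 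Dominated convergence then shows $\lim_{\eps \to 0^+} \calL_\eps(P_t^A f)(x)$ exists and equals $\int_{\Rd} \calL p^A(t,\cdot,y)(x)\, f(y)\, dy$; by definition this limit is $\calL(P_t^A f)(x)$. Comparing with the previous display finishes the proof.

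The only genuine obstacle is the singular nature of $\calL$, which prevents a direct application of Fubini; this is precisely what the $\calL_\eps$ truncation together with the uniform estimate (\ref{epsheat}) is designed to handle. No new estimates on $p^A$ or $q$ are required beyond what is already established in Sections~3 and~4.
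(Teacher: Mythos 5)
Your proposal is correct and follows essentially the same route as the paper: truncate to $\calL_{\eps}$, apply Fubini for the bounded truncated operator, pass to the limit $\eps \to 0^+$ under the integral via the uniform bound (\ref{epsheat}) of Lemma \ref{fractionalheat}, and invoke Proposition \ref{parabolic1} to identify the integrand with $\frac{\partial}{\partial t}p^A(t,x,y)$. The only difference is that you spell out the domination justifying differentiation under the integral in $t$ (which the paper leaves implicit); note that the comparability $\rho_{\alpha}^0(s,\cdot)\approx\rho_{\alpha}^0(t,\cdot)$ for $s\in[t/2,2t]$ that you need there is an elementary fact about the definition of $\rho_{\gamma}^{\beta}$ rather than an instance of (\ref{xxp}), which concerns the spatial variable.
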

\begin{proof}
We have
$$
\calL (P_t^A f)(x) = \lim_{\eps \to 0^+} \calL_{\eps} (P_t^A f)(x) 
= \lim_{\eps \to 0^+} \int_{\Rd} \calL_{\eps} p^A(t,\cdot,y)(x) f(y) \, dy.
$$
By Lemma \ref{fractionalheat} this is equal to
\begin{equation}
\label{LPT}
\int_{\Rd} \lim_{\eps \to 0^+}  \calL_{\eps} p^A(t,\cdot,y)(x) f(y) \, dy
= \int_{\Rd} \calL p^A(t,\cdot,y)(x) f(y) \, dy.
\end{equation}
By Proposition \ref{parabolic1} this is equal to
$$
\int_{\Rd} \frac{\partial}{\partial t} p^A(t,x,y) f(y) \, dy 
= \frac{\partial}{\partial t} \int_{\Rd}  p^A(t,x,y) f(y) \, dy. 
$$
\end{proof}

\begin{proposition}\label{ub1}For  $t \in (0,T]$ and $x,y \in \R^d$ we have
\begin{equation*}
 p^A(t,x,y) \le c \prod_{i = 1}^d g_t(x_i - y_i).
\end{equation*}
\end{proposition}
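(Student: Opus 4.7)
The plan is to use the decomposition $p^A(t,x,y) = p_y(t,x-y) + \varphi_y(t,x)$ from (\ref{pA}) and estimate each piece separately. For the first piece, note that the stable scaling gives $p_y(t,x-y) = \prod_{i=1}^d g_{a_{ii}^\alpha(y)\,t}(x_i-y_i)$, and since $a_{ii}(y)\in[b_1,b_2]$ by (\ref{bounded}) the elementary comparison $g_{ct}(u)\approx g_t(u)$ for $c$ bounded away from $0$ and $\infty$ yields $p_y(t,x-y)\le c\prod_{i=1}^d g_t(x_i-y_i)$. So everything reduces to proving $|\varphi_y(t,x)|\le c\prod_{i=1}^d g_t(x_i-y_i)$ on $(0,T]\times\Rd\times\Rd$.

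The same scaling argument also gives the pointwise upper bound $p_z(t-s,x-z)\le c\prod_{i=1}^d \rho_\alpha^0(t-s,x_i-z_i)$. The next step is to rewrite the bound from Theorem \ref{thmq} in a form suitable for the convolution estimates of Lemma \ref{conv}. Using the elementary identities
\[
s\,\rho_0^0(s,u)=\rho_\alpha^0(s,u),\qquad s^{\beta/\alpha}\rho_0^0(s,u)=\rho_\beta^0(s,u),\qquad (|u|^\beta\wedge 1)\rho_0^0(s,u)=\rho_0^\beta(s,u),
\]
I distribute the $s^{d-1}$ prefactor as $d-1$ copies of $s$ onto $d-1$ of the factors $\rho_0^0(s,z_i-y_i)$, promoting them to $\rho_\alpha^0(s,z_i-y_i)$, and absorb either $s^{\beta/\alpha}$ or $(|z_m-y_m|^\beta\wedge 1)$ into the remaining factor. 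This yields
\[
|q(s,z,y)|\le c\sum_{m=1}^d\left[\prod_{i\ne m}\rho_\alpha^0(s,z_i-y_i)\right]\bigl[\rho_\beta^0(s,z_m-y_m)+\rho_0^\beta(s,z_m-y_m)\bigr].
\]

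Plugging these bounds into $\varphi_y(t,x)=\int_0^t\!\int_{\Rd}p_z(t-s,x-z)q(s,z,y)\,dz\,ds$, the $z$-integration factorizes coordinatewise. For the $d-1$ indices $i\ne m$, Lemma \ref{conv}(ii) with $\beta_1=\beta_2=0$, $\gamma_1=\gamma_2=\alpha$ gives
\[
\int_{\R}\rho_\alpha^0(t-s,x_i-z_i)\rho_\alpha^0(s,z_i-y_i)\,dz_i\le c\,\rho_\alpha^0(t,x_i-y_i),
\]
uniformly in $s\in(0,t)$. For the distinguished coordinate $i=m$, the joint $s$-$z_m$ integration is handled by Lemma \ref{conv}(iii), which yields
\[
\int_0^t\!\int_{\R}\rho_\alpha^0(t-s,x_m-z_m)\bigl[\rho_\beta^0+\rho_0^\beta\bigr](s,z_m-y_m)\,dz_m\,ds\le c\bigl[\rho_{\alpha+\beta}^0+\rho_\alpha^\beta\bigr](t,x_m-y_m).
\]
Since $\rho_{\alpha+\beta}^0(t,u)=t^{\beta/\alpha}\rho_\alpha^0(t,u)\le T^{\beta/\alpha}\rho_\alpha^0(t,u)$ and $\rho_\alpha^\beta\le\rho_\alpha^0$, this is bounded by $c\,\rho_\alpha^0(t,x_m-y_m)$ for $t\le T$.

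Multiplying the per-coordinate estimates over $i=1,\dots,d$ and summing over $m$ produces the desired bound $|\varphi_y(t,x)|\le c\prod_{i=1}^d\rho_\alpha^0(t,x_i-y_i)\approx c\prod_{i=1}^d g_t(x_i-y_i)$, which combined with the estimate on $p_y(t,x-y)$ completes the proof. The whole argument is essentially routine given the bound on $q$ from Theorem \ref{thmq} and the convolution calculus of Lemma \ref{conv}; the only delicate point is the bookkeeping for distributing the $s^{d-1}$, $s^{\beta/\alpha}$, and $(|z_m-y_m|^\beta\wedge 1)$ factors so that the convolution lemmas apply cleanly and no divergent time integral appears near $s=0$ (which is guaranteed because the two parameters $\gamma_i+\beta_i$ in Lemma \ref{conv}(iii) are positive in every case considered).
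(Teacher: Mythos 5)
Your proposal is correct and follows essentially the same route as the paper: decompose $p^A=p_y+\varphi_y$ via (\ref{pA}), bound $p_y$ by scaling and (\ref{bounded}), rewrite the bound of Theorem \ref{thmq} in the factored form $\sum_m\bigl[\prod_{i\ne m}\rho_\alpha^0\bigr]\bigl[\rho_\beta^0+\rho_0^\beta\bigr]$, and then apply the convolution estimates of Lemma \ref{conv} coordinatewise, exactly as in the paper's estimate (\ref{upperfi}). The bookkeeping of the $s^{d-1}$, $s^{\beta/\alpha}$ and $(|z_m-y_m|^\beta\wedge 1)$ factors and the exponent checks in Lemma \ref{conv}(ii)--(iii) are done correctly.
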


\begin{proof}By  Theorem \ref{thmq}, estimates of $p_z$ and (\ref{conv2}) we obtain 
\begin{eqnarray}
&&|\varphi_y(t,x)| =\left|\int_0^t \int_{\Rd} p_z(t-s,x-z) q(s,z,y) \, dz \, ds \right|\nonumber\\ 
&& \le c \int_0^t \int_{\Rd}  \prod_{i=1}^d \rho_{\alpha}^0(t-s,x_i-z_i)\nonumber\\ 
&& \times \sum_{m=1}^d \left[\prod_{\substack{i=1\\ i \ne m}}^d \rho_{\alpha}^0(s,z_i-y_i)\right]
[ \rho_{0}^{\beta}(s,z_m-y_m) +\rho_{\beta}^0(s,z_m-y_m)] \, dz \, ds \nonumber\\
&& \le c \sum_{m=1}^d \left[\prod_{\substack{i=1 \nonumber\\ i \ne m}}^d \rho_{\alpha}^0(t,x_i-y_i)\right]
\int_0^t \int_{\R} \rho_{\alpha}^0(t-s,x_m-z_m) \\
&& \times [ \rho_{0}^{\beta}(s,z_m-y_m) +\rho_{\beta}^0(s,z_m-y_m)] \, dz \, ds \nonumber\\
&& \le c \sum_{m=1}^d \left[\prod_{\substack{i=1\\ i \ne m}}^d \rho_{\alpha}^0(t,x_i-y_i)\right]
[ \rho_{\alpha+\beta}^{0}(t,x_m-y_m) + \rho_{\alpha}^{\beta}(t,x_m-y_m)]\nonumber\\
&& \le c \left[\prod_{i = 1}^d \rho_{\alpha}^0(t,x_i-y_i)\right]\left[t^{\beta/\alpha} + \sum_{m = 1}^d \left(|x_m-y_m|^{\beta} \wedge 1\right)\right].\label{upperfi}
\end{eqnarray}
Now the conlusion  follows from (\ref{defpA}) and estimates of $p_y$.
\end{proof} 

The following result shows that $\{P_t^A\}$ is a Feller semigroup.
\begin{theorem}
\label{Feller}
We have:

(i) $P_t^A: C_0(\Rd) \to C_0(\Rd)$ for any $ t \in (0,\infty)$,

(ii) $\lim_{t \to 0^+} ||P_t^A f - f||_{\infty} = 0$ for any $f \in C_0(\Rd)$.

(iii) $p^A(t,x,y) \ge 0$ for any $(t,x,y) \in (0,\infty)\times\Rd\times\Rd$,

(iv) $\int_{\Rd} p^A(t,x,y) \, dy = 1$ for any $(t,x) \in (0,\infty)\times\Rd$,

(v) $\int_{\Rd} p^A(t,x,z) p^A(s,z,y) \, dz = p^A(s+t,x,y)$ for any $(s,t,x,y) \in (0,\infty)\times(0,\infty)\times\Rd\times\Rd$.
\end{theorem}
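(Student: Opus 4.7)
The strategy is as follows. Parts (i) and (ii) follow directly from the upper bound in Proposition \ref{ub1} and the estimates collected in Section 3; parts (iii), (iv), (v) are then reduced to the maximum principle of Proposition \ref{maximumprinciple} applied to suitable auxiliary functions that solve the parabolic equation $\partial_t u = \calL u$ with zero initial data.

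For (i), joint continuity of $p^A$ together with the integrable majorant $c \prod_{i=1}^d g_t(x_i-y_i)$ from Proposition \ref{ub1} yields continuity of $x \mapsto P_t^A f(x)$ by dominated convergence, while the same majorant combined with $f\in C_0(\Rd)$ gives $\lim_{|x|\to\infty} P_t^A f(x) = 0$ after splitting the integral into a neighbourhood of $x$ and its complement. For (ii) I use the decomposition $p^A(t,x,y) = p_y(t,x-y) + \varphi_y(t,x)$ from \eqref{pA}. The $\varphi_y$ contribution is uniformly small thanks to the estimate \eqref{upperfi} integrated against $f$ together with \eqref{conv0}, while the $p_y$ contribution, after replacing $p_y$ by $p_x$ at the cost of a vanishing error controlled by Corollary \ref{productgt2}, becomes the convolution of $f$ with the frozen stable density $p_x$, which converges uniformly to $f$ for $f\in C_0(\Rd)$ by standard arguments.

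The proofs of (iii)--(v) proceed by applying Proposition \ref{maximumprinciple} to $u(t,x) = P_t^A f(x)$ or appropriate differences. All the required hypotheses are verifiable: boundedness and continuity follow from Proposition \ref{ub1}, the initial-value condition \eqref{mp1} is precisely (ii), the parabolic equation \eqref{mp4} is Lemma \ref{Fubinisemigroup}, continuity of $t \mapsto \calL u(t,x)$ uses the bound \eqref{epsheat} in Lemma \ref{fractionalheat}, and the uniform Hölder estimate \eqref{mp3} on $(\eps,T]$ follows by combining Lemma \ref{pyHolder} with Theorem \ref{Holder}, integrated against bounded $f$. For (iii), applying the principle to $u = -P_t^A f$ with $0\le f \in C_0(\Rd)$ gives $P_t^A f \ge 0$, and as $f$ ranges over such functions, joint continuity of $p^A$ forces $p^A \ge 0$ pointwise. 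For (iv), set $u(t,x) = P_t^A \mathbf{1}(x) - 1$: this is bounded because $\int p^A(t,x,y)\,dy \le c$ by Proposition \ref{ub1}, it solves the parabolic equation (using $\calL \mathbf{1} = 0$) with $u(0,\cdot) = 0$, and two-sided application of the principle to $\pm u$ forces $u \equiv 0$. For (v), with $s > 0$ and $y$ fixed, apply the principle to $u(t,x) = \int_{\Rd} p^A(t,x,z) p^A(s,z,y)\,dz - p^A(t+s,x,y)$, which solves $\partial_t u = \calL u$ by linearity and vanishes at $t = 0^+$ by (ii) applied to the bounded continuous function $z \mapsto p^A(s,z,y)$.

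The main technical obstacle is the verification of the uniform Hölder condition \eqref{mp3} on $(\eps,T]$ with a controlled constant $K_\eps$, and the handling of (iv) despite $\mathbf{1}\notin C_0(\Rd)$: the argument for (ii) must be extended to show $P_t^A\mathbf{1}(x) \to 1$ uniformly as $t\to 0^+$, and one must justify pulling $\calL$ under the integral defining $P_t^A\mathbf{1}$ by means of \eqref{epsheat} and dominated convergence, rather than invoking a Feller structure that is precisely what we are in the process of establishing. A parallel care is needed in (v) to check the hypotheses for $u$ uniformly in $t$, using the Hölder bound from Theorem \ref{mainthm}(iv) (in the form we are simultaneously proving) in tandem with the upper estimate to dominate the convolution $\int p^A(t,x,z)p^A(s,z,y)\,dz$.
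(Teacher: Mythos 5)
Your proposal is correct and follows essentially the same route as the paper: (i)--(ii) from the upper bound of Proposition \ref{ub1} and the decomposition \eqref{pA}, and (iii)--(v) by feeding suitable functions into the maximum principle of Proposition \ref{maximumprinciple}, with \eqref{mp1} coming from (ii), \eqref{mp2} from Lemma \ref{fractionalheat}, \eqref{mp4} from Lemma \ref{Fubinisemigroup}, and \eqref{mp3} from the H\"older estimate for $x\mapsto p^A(t,x,y)$; your choices for (iii) and (iv) (sign-flipped $P_t^Af$ with $f\ge 0$, and $P_t^A\mathbf{1}-1$) coincide with the paper's, and you rightly flag the point the paper passes over silently, namely that \eqref{mp1} for $\mathbf{1}\notin C_0(\Rd)$ needs the separate check $\sup_x|P_t^A\mathbf{1}(x)-1|\to 0$. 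The one genuine deviation is (v): you apply the principle at the kernel level, to $u(t,x)=\int p^A(t,x,z)p^A(s,z,y)\,dz-p^A(t+s,x,y)$ with $y$ fixed, whereas the paper applies it to $P_{t+s}^Af-P_t^AP_s^Af$ for $f\in C_c^\infty$ and only afterwards passes to the kernels. Your version is workable --- $z\mapsto p^A(s,z,y)$ lies in $C_0(\Rd)$ by Proposition \ref{ub1} and joint continuity, so (ii) handles the first term --- but it requires one extra verification the paper's operator-level formulation avoids: that $p^A(t+s,x,y)\to p^A(s,x,y)$ uniformly in $x\in\Rd$ (not merely locally uniformly) as $t\to 0^+$, which does follow from the decay in the upper bound combined with joint continuity, but should be stated. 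Two small citation slips: the H\"older input for \eqref{mp3} is Theorem \ref{mainthm}(iv) together with Proposition \ref{gradient1} (Theorem \ref{mainthm}(iv) is proved from Lemma \ref{pyHolder} and Theorem \ref{thmq}, not from Theorem \ref{Holder}), and for $P_t^A\mathbf{1}$ the interchange of $\calL$ with the integral is exactly Lemma \ref{Fubinisemigroup}, which is already stated for bounded Borel $f$, so no new argument is needed there.
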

\begin{proof}
(i) follows by the fact that $x \to p^A(t,x,y)$ is continuous and by Proposition \ref{ub1}.

It is shown in the proof of Proposition \ref{ub1} that
$$
|\varphi_y(t,x)| \le c \sum_{m=1}^d \left[\prod_{\substack{i=1\\ i \ne m}}^d \rho_{\alpha}^0(t,x_i-y_i)\right]
[ \rho_{\alpha+\beta}^{0}(t,x_m-y_m) + \rho_{\alpha}^{\beta}(t,x_m-y_m)].
$$
Let $f \in C_0(\Rd)$. It follows that $\lim_{t \to 0^+} \sup_{x \in \Rd} \left| \int_{\Rd} \varphi_y(t,x) f(y) \, dy \right| = 0$ for any $f \in C_0(\Rd)$. It is clear that $\lim_{t \to 0^+} \sup_{x \in \Rd} \left| \int_{\Rd} p_y(t,x-y) f(y)\, dy - f(x)\right| = 0$ for any $f \in C_0(\Rd)$. Hence we obtain (ii).

For any $(t,x) \in (0,T] \times \Rd$ put $u(t,x) = P_t^A f(x)$, $u(0,x) = f(x)$. Note that $u(t,x)$ satisfies the assumptions of Proposition \ref{maximumprinciple}. Indeed, (ii) gives (\ref{mp1}). By Lemma \ref{fractionalheat} we get (\ref{mp2}). By Theorem \ref{mainthm} (iv) and Proposition \ref{gradient1} we obtain (\ref{mp3}). Lemma \ref{Fubinisemigroup} gives (\ref{mp4}). Applying Proposition \ref{maximumprinciple} to $f \in C_c^{\infty}$, $f \le 0$ we obtain (iii). Note that $\tilde{u}(t,x) = -1 + P_t^A 1(x)$, $u(0,x) = 0$ also satisfies the assumptions of Proposition \ref{maximumprinciple}. Using this proposition we get that $P_t^A 1 \equiv 1$ which implies (iv). Fix $s \in (0,T]$, $f \in C_c^{\infty}$, $f \ge 0$ and denote $u_1(t,x) = P_{t+s}^A f(x)$, $u_2(t,x) = P_{t}^A P_s^A f(x)$, $u_1(0,x) = u_2(0,x) = P_s^A f(x)$, $u(t,x) = u_1(t,x) - u_2(t,x)$. By Proposition \ref{maximumprinciple} applied to $u(t,x)$ we get $u_1 \equiv u_2$ which implies (v).
\end{proof}

Using similar ideas as in the proof of (\ref{genvarphi1}) one can easily obtain the following result.
\begin{lemma}
\label{changeoforder}
For any $t \in (0,\infty)$, $x \in \Rd$ and any bounded, H{\"o}lder continuous function $f$ we have
\begin{equation}
\label{changeoforder1}
\calL \left[\int_0^t P_s^A f(\cdot) \, ds\right](x) = \int_0^t \calL P_s^A f(x) \, ds.
\end{equation}
\end{lemma}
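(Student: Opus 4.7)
The plan is to mimic the argument used for (\ref{genvarphi1}) inside Lemma \ref{fractionalheat}: introduce the truncated generator $\calL_\eps$, swap the order of integration via Fubini while $\eps>0$, then pass to the limit $\eps\to 0^+$ on both sides by dominated convergence. Let $g(x)=\int_0^t P_s^A f(x)\,ds$, which is bounded by $t\|f\|_\infty$. Since for fixed $\eps>0$ the measure $|z_i|^{-1-\alpha}\mathbf{1}_{|z_i|>\eps}\,dz_i$ is finite, Fubini gives
$$
\calL_\eps g(x)=\int_0^t \calL_\eps P_s^A f(x)\,ds,\qquad \calL_\eps P_s^A f(x)=\int_{\Rd}\calL_\eps p^A(s,\cdot,y)(x)\,f(y)\,dy.
$$
Because $P_s^A 1 \equiv 1$ by Theorem \ref{Feller}(iv), interchanging $\calL_\eps$ with the $y$-integral gives $\int_{\Rd}\calL_\eps p^A(s,\cdot,y)(x)\,dy = \calL_\eps(1)(x)=0$, so one may subtract $f(x)$ inside:
$$
\calL_\eps P_s^A f(x)=\int_{\Rd}\calL_\eps p^A(s,\cdot,y)(x)\bigl(f(y)-f(x)\bigr)\,dy.
$$

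Next I would establish a uniform (in $\eps$) bound on $|\calL_\eps P_s^A f(x)|$ that is integrable in $s$ on $(0,t)$. Fix $\gamma\in(0,\beta)$ and use (\ref{epsheat}) to dominate $|\calL_\eps p^A(s,\cdot,y)(x)|$ by $cs^{(\gamma-\beta-\alpha)/\alpha}\prod_i\rho_\alpha^0(s,x_i-y_i)$; combine this with the H\"older estimate $|f(y)-f(x)|\le c(|y-x|^\eta\wedge 1)\le c\sum_i(|x_i-y_i|^\eta\wedge 1)$, where $\eta>0$ is the H\"older exponent of $f$. Applying (\ref{conv0}) with $\beta_1=\eta'=\min(\eta,\alpha/2)$ and $\gamma_1=\alpha$ in one coordinate, and $\beta_1=0$, $\gamma_1=\alpha$ in the remaining ones, yields
$$
|\calL_\eps P_s^A f(x)|\le C\,s^{(\gamma-\beta+\eta'-\alpha)/\alpha}.
$$
Now choose $\gamma\in(\max(0,\beta-\eta'),\beta)$, which is nonempty since $\eta'>0$; then the exponent exceeds $-1$ and the bound is integrable on $(0,t)$.

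The passage to the limit is now routine. For each fixed $s>0$, dominated convergence in $y$ (using (\ref{epsheat}) as the dominating bound and the pointwise convergence $\calL_\eps p^A(s,\cdot,y)(x)\to\calL p^A(s,\cdot,y)(x)$ established in Lemma \ref{fractionalheat}) shows $\calL_\eps P_s^A f(x)\to\calL P_s^A f(x)$. Then dominated convergence in $s$, using the integrable bound above, gives $\int_0^t\calL_\eps P_s^A f(x)\,ds\to\int_0^t\calL P_s^A f(x)\,ds$. Since $\calL g(x)$ is by definition $\lim_{\eps\to 0^+}\calL_\eps g(x)$, and the Fubini identity equates $\calL_\eps g(x)$ with the $s$-integral on the right, the two limits coincide, which is (\ref{changeoforder1}).

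The only real obstacle is integrability of the bound near $s=0$: the bare estimate (\ref{epsheat}) has time exponent $(\gamma-\beta-\alpha)/\alpha<-1$ for every admissible $\gamma<\beta$, so it is \emph{not} integrable on $(0,t)$. The gain of an extra factor $s^{\eta'/\alpha}$ is bought by two observations: the cancellation $\calL_\eps(1)=0$, which allows one to replace $f(y)$ by $f(y)-f(x)$, and the H\"older regularity of $f$, which converts the difference into a factor $|y-x|^\eta\wedge 1$ that, after integration against $\prod_i\rho_\alpha^0(s,\cdot)$, produces the missing time power. This is precisely why the lemma requires $f$ to be H\"older and not merely bounded.
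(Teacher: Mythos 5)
Your proof is correct and implements exactly the ``similar ideas'' the paper alludes to (it omits the proof, pointing to the argument for (\ref{genvarphi1})): truncate with $\calL_{\eps}$, apply Fubini, and handle the non-integrable singularity of (\ref{epsheat}) at $s=0$ by the cancellation $\int_{\Rd}\calL_{\eps}p^A(s,\cdot,y)(x)\,dy=0$ (from $P_s^A1\equiv 1$) together with the H\"older regularity of $f$, choosing $\gamma\in(\beta-\eta',\beta)$ so the resulting bound $s^{(\gamma-\beta+\eta'-\alpha)/\alpha}$ is integrable. The exponent bookkeeping via (\ref{conv0}) and the two dominated-convergence passages (in $y$ for fixed $s$, then in $s$ uniformly in $\eps$) are all sound.
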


\begin{proposition}
\label{martingaleproblem}
For any $t \in (0,\infty)$, $x \in \Rd$ and $f \in C_b^2(\Rd)$ we have
\begin{equation}
\label{martingaleproblem1}
P_t^A f(x) = f(x) + \int_0^t P_s^A \calL f(x) \, ds.
\end{equation}
\end{proposition}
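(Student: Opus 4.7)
The plan is to apply the maximum principle (Proposition \ref{maximumprinciple}) to the defect
$$w(t, x) = P_t^A f(x) - f(x) - \int_0^t P_s^A \calL f(x)\,ds, \qquad (t,x)\in [0,T]\times\Rd,$$
and conclude $w \equiv 0$ on $[0,T]\times\Rd$ for every $T>0$, which gives (\ref{martingaleproblem1}). First I would verify that $\calL f$ is bounded and Hölder continuous on $\Rd$ for $f \in C_b^2(\Rd)$: boundedness follows by splitting $\int |\delta_f(x,e_i z)|\,|z|^{-1-\alpha}\,dz$ at $|z|=1$ and using the Taylor bound $|\delta_f(x,e_i z)| \le \|D^2 f\|_\infty z^2$ near zero together with $|\delta_f|\le 4\|f\|_\infty$ elsewhere; Hölder regularity follows from the same decomposition combined with (\ref{sigmaHolder}). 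Continuity of $s\mapsto P_s^A \calL f(x)$ at $s = 0^+$ follows from the concentration of $p^A(s, x, \cdot)$ at $x$ via (\ref{comparability}) and continuity of $\calL f$, so $\frac{d}{dt}\int_0^t P_s^A \calL f(x)\,ds = P_t^A \calL f(x)$.

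The algebraic heart of the proof is a key identity. Applying Lemma \ref{changeoforder} (valid since $\calL f$ is bounded and Hölder) and then Lemma \ref{Fubinisemigroup} (valid since $\calL f$ is bounded Borel), together with the fundamental theorem of calculus, yields
$$\calL\left[\int_0^t P_s^A \calL f(\cdot)\,ds\right](x) = \int_0^t \calL P_s^A \calL f(x)\,ds = \int_0^t \frac{\partial}{\partial s} P_s^A \calL f(x)\,ds = P_t^A \calL f(x) - \calL f(x).$$
Combining this with Lemma \ref{Fubinisemigroup} applied to $f$, both $\partial_t w(t,x)$ and $\calL w(t,\cdot)(x)$ reduce to $\calL P_t^A f(x) - P_t^A \calL f(x)$, so the parabolic equation (\ref{mp4}), $\partial_t w = \calL w$, holds on $(0,T]\times\Rd$.

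To invoke Proposition \ref{maximumprinciple} I still need to check (\ref{mp1})--(\ref{mp3}) for $w$. Boundedness and joint continuity of $w$ are immediate from $\|P_t^A f\|_\infty \le \|f\|_\infty$ and $\|P_s^A \calL f\|_\infty \le \|\calL f\|_\infty$; (\ref{mp2}) follows from Lemma \ref{fractionalheat} (which provides continuity of $t \mapsto \calL p^A(t,\cdot,y)(x)$) combined with dominated convergence on compact subintervals of $(0,T]$; (\ref{mp3}) follows from Theorem \ref{mainthm}(iv) applied to both $P_t^A f$ and $P_s^A \calL f$, together with Lipschitz regularity of $f$ and integrability of $s^{-\gamma/\alpha}$ on $(0,T)$ for $\gamma<\alpha$, so any $\gamma \in ((\alpha-1)\vee 0,\alpha\wedge 1)$ is admissible. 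Applying Proposition \ref{maximumprinciple} to $w$ and to $-w$, both with zero initial data, gives $w \equiv 0$.

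The main obstacle I expect is verifying (\ref{mp1}): Theorem \ref{Feller}(ii) provides uniform convergence $P_t^A f \to f$ only for $f \in C_0(\Rd)$, whereas here $f \in C_b^2$ only. The remedy is to split $\int p^A(t,x,y)(f(y)-f(x))\,dy$ at a threshold $|y-x|=\delta$, to estimate the near part by the Lipschitz constant of $f$, and to control the far part by the tail estimate $\int_{|y-x| \ge \delta} \prod_i g_t(x_i-y_i)\,dy \to 0$ as $t \to 0^+$; crucially this bound is independent of $x$ thanks to the translation invariance of the one-dimensional stable densities $g_t$.
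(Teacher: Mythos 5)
Your proposal is correct and follows essentially the same route as the paper: the identity $\calL\bigl[\int_0^t P_s^A \calL f(\cdot)\,ds\bigr](x)=P_t^A\calL f(x)-\calL f(x)$ via Lemma \ref{changeoforder} and Lemma \ref{Fubinisemigroup}, then Proposition \ref{maximumprinciple} applied to the difference between $P_t^A f$ and $f+\int_0^t P_s^A\calL f\,ds$ (in both signs) with hypotheses (\ref{mp1})--(\ref{mp4}) checked exactly as in the proof of Theorem \ref{Feller}. Your extra verifications --- Hölder continuity of $\calL f$ for $f\in C_b^2$ and the uniform convergence (\ref{mp1}) for $f\in C_b^2\setminus C_0$ via the Lipschitz/tail splitting --- are correct fillings of steps the paper dismisses as easy, not a different argument.
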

\begin{proof}
Put $u(t,x) = f(x) + \int_0^t P_s^A \calL f(x) \, ds$. By (\ref{changeoforder1}) we get
$$
\calL u(t,x) = \calL f(x) + \int_0^t \calL \left(P_s^A \calL f\right)(x) \, ds.
$$
By Lemma \ref{Fubinisemigroup} this is equal to
$$
\calL f(x) + \int_0^t \frac{\partial}{\partial s} \left(P_s^A \calL f\right)(x) \, ds = P_t^A \calL f(x) = 
\frac{\partial}{\partial t} u(t,x).
$$
It is easy to check that $u(t,x)$ satisfies (\ref{mp1}-\ref{mp3}). Put $\tilde{u}(t,x) = P_t^A f(x)$, $\tilde{u}(0,x) = f(x)$ and $v(t,x) = u(t,x) - \tilde{u}(t,x)$. By the arguments from the proof of Theorem \ref{Feller} we obtain that $\tilde{u}(t,x)$ satisfies (\ref{mp1}-\ref{mp4}). Using Proposition \ref{maximumprinciple} for $v(t,x)$ we get $v \equiv 0$ which implies the assertion of the lemma.
\end{proof}

The next theorem gives that $\calL$ is a generator of the semigroup $\{P_t^A\}$.
\begin{theorem}
\label{generator}
For any $f \in C_b^2(\Rd)$ we have 
$$
\lim_{t \to 0^+} \frac{P_t^A f(x) - f(x)}{t} = \calL f(x), \quad x \in \Rd
$$
and the convergence is uniform.
\end{theorem}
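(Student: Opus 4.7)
The plan is to derive the theorem directly from the integral representation established in Proposition \ref{martingaleproblem}, namely
\[
P_t^A f(x) - f(x) = \int_0^t P_s^A \calL f(x) \, ds, \qquad x \in \R^d,\ t \in (0,\infty).
\]
Dividing by $t$ and writing $g := \calL f$, the claim reduces to showing that
\[
\lim_{t \to 0^+} \sup_{x \in \R^d} \left|\frac{1}{t}\int_0^t P_s^A g(x)\, ds - g(x)\right| = 0,
\]
which follows once one establishes that $P_s^A g \to g$ uniformly on $\R^d$ as $s \to 0^+$.

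First I would verify that $g = \calL f$ is bounded and uniformly continuous on $\R^d$. Boundedness follows by splitting the defining integral at $|z_i|=1$: near zero one uses the Taylor bound $|\delta_f(x, z_i e_i)| \le \|\partial_{ii}^2 f\|_\infty z_i^2$, producing the integrable bound $c|z_i|^{1-\alpha}$ since $\alpha<2$, while far from zero $|\delta_f(x, z_i e_i)| \le 4\|f\|_\infty$; the uniform bound $\s_i(x) \le b_2^\alpha$ finishes this. For uniform continuity I would estimate
\[
|\calL f(x) - \calL f(x')| \le \frac{\calA_\alpha}{2}\sum_{i=1}^d \int_\R \bigl|\delta_f(x, z_i e_i)\s_i(x) - \delta_f(x', z_i e_i)\s_i(x')\bigr|\, \frac{dz_i}{|z_i|^{1+\alpha}},
\]
and combine the H\"older estimate \eqref{sigmaHolder} for $\s_i$ with a standard splitting trick: on $|z_i|\le\eps$ the second-order Taylor expansion plus continuity of $\partial_{ii}^2 f$ yields smallness as $|x-x'|\to 0$, while on $|z_i|>\eps$ a first-order expansion together with boundedness of $\nabla f$ gives a bound of the form $c\|\nabla f\|_\infty |x-x'|\eps^{-\alpha}$; optimizing in $\eps$ yields $|\calL f(x) - \calL f(x')| \to 0$ uniformly as $|x-x'|\to 0$.

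Next I would show $P_s^A g \to g$ uniformly as $s \to 0^+$. Using $\int p^A(s,x,y)\, dy = 1$ from Theorem \ref{Feller}(iv),
\[
|P_s^A g(x) - g(x)| \le \int_{\R^d} p^A(s,x,y)\, |g(y)-g(x)|\, dy.
\]
Given $\eta>0$, uniform continuity of $g$ provides $\delta>0$ with $|g(y)-g(x)|\le \eta$ whenever $|y-x|\le\delta$, which bounds the integral over $\{|y-x|\le\delta\}$ by $\eta$. On $\{|y-x|>\delta\}$ I would apply Theorem \ref{mainthm}(iii) to get $p^A(s,x,y)\le c_1\prod_i g_s(x_i-y_i)$, and then use a union bound together with the scaling $\int_{|w|>\delta/\sqrt d} g_s(w)\, dw = \int_{|u|>\delta s^{-1/\alpha}/\sqrt d} g_1(u)\, du \to 0$ to conclude the tail integral vanishes as $s\to 0^+$, uniformly in $x$ by translation invariance of $g_s$.

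Combining the two steps one obtains
\[
\sup_{x \in \R^d} \left|\frac{P_t^A f(x) - f(x)}{t} - \calL f(x)\right| \le \sup_{s \in (0,t)} \|P_s^A g - g\|_\infty \longrightarrow 0.
\]
The main obstacle is the uniform-in-$x$ continuity of $g = \calL f$, since $C_b^2$ guarantees the second derivatives are continuous but not a priori uniformly so. Under the usual reading of $C_b^2$ (bounded function with bounded and uniformly continuous derivatives up to order two), or alternatively by a density argument approximating $f$ by compactly supported smooth functions and passing to the limit using the $L^\infty$ bound on $P_t^A$, the estimate goes through cleanly.
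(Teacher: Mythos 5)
Your proposal is correct and follows the same skeleton as the paper: both start from Proposition \ref{martingaleproblem}, write $\frac{1}{t}(P_t^A f - f) - \calL f = \frac{1}{t}\int_0^t (P_s^A \calL f - \calL f)\,ds$, and reduce the theorem to strong continuity at $0$ of the semigroup acting on $g=\calL f$. The difference lies in how that last step is closed. The paper simply cites Theorem \ref{Feller}(ii); but (ii) is stated for $f\in C_0(\Rd)$, and for $f\in C_b^2(\Rd)$ the function $\calL f$ is bounded and continuous yet need not vanish at infinity, so the citation is really an implicit extension of (ii) to bounded uniformly continuous functions. You instead prove the needed statement $\|P_s^A \calL f - \calL f\|_\infty\to 0$ directly: boundedness of $\calL f$ by splitting the jump integral at $|z_i|=1$, a modulus-of-continuity estimate for $\calL f$ using \eqref{sigmaHolder} and the $\eps$-splitting, then mass conservation (Theorem \ref{Feller}(iv)), the upper bound in Theorem \ref{mainthm}(iii), and the scaling $\int_{|w|>\delta/\sqrt d} g_s(w)\,dw\to 0$ for the off-diagonal part. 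This buys a self-contained and arguably more honest closing of the gap the paper glosses over, at the price of the extra regularity discussion you flag: uniform continuity of $\calL f$ requires reading $C_b^2$ as having uniformly continuous second derivatives (the density alternative you sketch is shakier, since approximating a general $C_b^2$ function by compactly supported ones in the norms controlling $\calL$ is not straightforward), but this is no worse than the implicit assumption in the paper's own two-line proof.
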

\begin{proof}
By Proposition \ref{martingaleproblem} we have
$$
\lim_{t \to 0^+} \frac{P_t^A f(x) - f(x)}{t} =\lim_{t \to 0^+} \frac{1}{t} \int_0^t P_s^A \calL f(x) \, ds.
$$
By Theorem \ref{Feller} (ii) this is equal to $\calL f(x)$ and the convergence is uniform.
\end{proof}

We are now in a position to provide the proofs of most of the parts of Theorem \ref{mainthm}.
\begin{proof}[proof of Theorem \ref{mainthm} (i), (ii) and the upper bound estimate in (iii)] 
From Theorem \ref{Feller} and  Theorem \ref{generator}  we conclude that there is a Feller process $\tilde{X}_t$ with the transition kernel $p^A(t,x,y)$ and the generator $\calL$. Let  $\p^x, \E^x $ be the distribution and expectation  for the process starting from $x\in \R^d$.
First, note that for any  function $f \in C_b^2(\Rd)$, 
the process
\begin{equation*} \label{martingale0}
M_t^{\tilde{X},f}=f(\tilde{X}_t)-f(\tilde{X}_0)- \int_0^t \calL f(\tilde{X}_s)ds 
\end{equation*}
is a  $(\p^x, \calF_t)  $ martingale, where $\calF_t  $ is a natural filtration. That is  $\p^x$ solves the martingale problem for 
$(\calL, C_b^2(\Rd))$. On the other hand, according to  \cite[Theorem 6.3]{BC2006},  the unique weak solution $X$ to the stochastic equation
(\ref{main}) has the   law which is the unique solution to the  martingale problem for 
$(\calL, C_b^2(\Rd))$. It follows that that $\tilde{X}$ and $X$ have the same law and $p^A(t,x,y)$ is the transition kernel of 
${X}$.

The continuity   of $p^A(t,x,y)$ with respect to all variables follows from Theorem \ref{thmq}.
Positivity  is a consequence of the  lower bound in (\ref{comparability}) which will be proved in the next section. Finally, (ii) follows from Proposition \ref{parabolic1}. The upper bound estimate in (iii) follows from Proposition \ref{ub1}.
\end{proof}
\begin{proof}[proof of Theorem \ref{mainthm} (iv)]
The main tool used in this proof is Theorem \ref{thmq}. 
By Lemma \ref{pyHolder} and Theorem \ref{thmq} we get
\begin{eqnarray}
\nonumber
&& \left|\varphi_y(t,x) - \varphi_y(t,x')\right|\\
\nonumber
&\le& \int_0^t \int_{\Rd} \left|p_z(t-s,x-z) - p_z(t-s,x'-z)\right| \left|q(s,z,y)\right| \, dz \, ds\\
\label{phiy3}
&\le& c \left(A(t,x,y) + A(t,x',y)\right),
\end{eqnarray}
where 
\begin{eqnarray*}
A(t,x,y) &=& |x - x'|^{\gamma} \int_0^t \int_{\Rd} (t-s)^{-\gamma/\alpha} \left[\prod_{i = 1}^d g_{t-s}(x_i-z_i)\right]\\
&& \times s^{d-1} \left[\prod_{i = 1}^d \rho_0^0(s,z_i-y_i)\right] \left[s^{\beta/\alpha} + \sum_{m=1}^d \left(|z_m-y_m|^{\beta} \wedge 1\right)\right] \, dz \, ds.
\end{eqnarray*}
We have
\begin{eqnarray*}
&& A(t,x,y) \le c |x - x'|^{\gamma} \sum_{m=1}^d \left[\prod_{\substack{i=1\\ i \ne m}}^d g_{t}(x_i-y_i)\right]\\
&& \times
\int_0^t \int_{\R} \rho_{\alpha - \gamma}^0(t-s,x_m-z_m) \rho_0^{\beta}(s,z_m-y_m) \, dz_m \, ds\\
&&  + c |x - x'|^{\gamma}  \left[\prod_{i=2}^d g_{t}(x_i-y_i)\right]
\int_0^t \int_{\R} \rho_{\alpha - \gamma}^0(t-s,x_1-z_1) \rho_{\beta}^0(s,z_1-y_1) \, dz_1 \, ds.
\end{eqnarray*}
By (\ref{conv2}) we have 
$$
\int_0^t \int_{\R} \rho^0_{\alpha - \gamma}(t-s,x_m-z_m) \rho_0^{\beta}(s,z_m-y_m) \, dz_m \, ds 
\le \rho_{\alpha - \gamma + \beta}^0(t,x_m-y_m) + \rho_{\alpha - \gamma}^{\beta}(t,x_m-y_m),
$$
and
$$
\int_0^t \int_{\R} \rho_{\alpha - \gamma}^0(t-s,x_1-z_1) \rho_{\beta}^0(s,z_1-y_1) \, dz_1 \, ds 
\le \rho_{\alpha - \gamma + \beta}^0(t,x_1-y_1).
$$
It follows that 
$$
A(t,x,y) \le c |x - x'|^{\gamma} t^{-\gamma/\alpha} \left[\prod_{i = 1}^d g_t(x_i-y_i)\right].
$$
Using this, (\ref{phiy3}), Lemma \ref{pyHolder} and (\ref{pA}) we get the assertion of Theorem \ref{mainthm} (iv).
\end{proof}

\section{Lower bound estimates}

\subsection{L\'evy system}
Let  $\p^x, \E^x $ be the distribution and expectation for the process $X_t$ starting from $x\in \R^d$. By $\calF_t$ we denote a natural filtration. For $x\in \R^d $ and Borel $A\subset \R^d $ we define the jumping measure 
$$J(x, A) = \calA_{\alpha} \sum_{i = 1}^d \int_{A} \otimes_{k\ne i}\delta_{x_k}(dy_k) a^\alpha_{ii}(x)\frac{dy_i}{|y_i-x_i|^{1 + \alpha}},$$
where $\delta_{x_k}$ is a Dirac measure on $\R$ concentrated at $x_k$.  

The purpose of this subsection is to provide arguments for the L\'evy system formula. Namely, we will show that for any $x \in\R^d$
and any non-negative
measurable function  $f$  on $\R_+\times \R^d\times \R^d$
vanishing on
$\{
(s,x,y)\in \R_+\times \R^d\times \R^d; x=y\}$ and $\calF_t$ stopping time $T$, we have
\begin{equation}\label{LS}\E^x \sum_{s\le T}f(s, X_{s-}, X_{s} )= \E^x\int_0^T \int_{\R^d}f(s,X_{s},y)J(X_s, dy)ds.\end{equation}
Since we exactly follow the approach of \cite{CZ} we only briefly  sketch the arguments. 

It is well known that for $f \in C_b^2(\Rd)$,
$$
\calL f(x) = \frac{\calA_{\alpha}}{2} \sum_{i = 1}^d \int_{\R} \left[f(x + a_{ii}(x) w_i e_i) + f(x - a_{ii}(x) w_i e_i) - 2 f(x)\right] \, \frac{dw_i}{|w_i|^{1 + \alpha}}. 
$$

For $y\in \R^d$ we denote $|y|_\infty=  \max_i \{|y_i|\}$ the sup-norm in $\R^d$.
For $x\in \R^d$ and $r>0$ we denote $B(x, r)= \{y\in \R^d, |y-x|_\infty< r\}$. 
Then for $f \in C_b^2(\Rd)$, we can rewrite the formula of the generator as 
$$
\calL f(x) = \lim_{r\searrow 0} \int_{B^c(x, r)} (f(y)-f(x)) J(x, dy). 
$$ 
As it has been already observed in the last section,  for any  function $f \in C_b^2(\Rd)$, the process
\begin{equation*} \label{martingale}
M_t^f=f({X}_t)-f({X}_0)- \int_0^t \calL f({X}_s)ds 
\end{equation*}
is a  $(\p^x, \calF_t)$ martingale. 
Suppose that
$A$
and
$B$
are two bounded closed subsets of
$\R^d$
having a positive distance
from each other. Let $f \in C_b^2(\Rd)$ be such that $f(x)=0, x\in A$ and $f(x)=1, x\in B$. We consider a martingale transform of $M_t^{f}$,
$$N_t^f=\int_0^t{\bf 1}_A(X_{s-})dM_s^{f}. $$
By the Ito formula, if  $X_{s-}\in A$, we have 
$$dM_s^f= f(X_s)-f(X_{s-})-\calL f(X_s)ds= f(X_s)-\calL f(X_s)ds.$$
This implies that
\begin{eqnarray*}N_t^f&=& \sum_{s\le t}{\mathbf 1}_A(X_{s-})f(X_{s})-\int_0^t{\bf 1}_A(X_{s})\calL f(X_s)ds\\
 &=& \sum_{s\le t}{\mathbf 1}_A(X_{s-})f(X_{s})-\int_0^t{\bf 1}_A(X_{s})\int f(y)J(X_s, dy)ds\end{eqnarray*}
Approximating ${\bf 1}_B$ by a decreasing sequence of smooth functions we show that 
$$\sum_{s\le t}{\mathbf 1}_A(X_{s-}){\bf 1}_B(X_{s})-\int_0^t {\bf 1}_A(X_{s})\int_BJ(X_s, dy)ds$$ is a martingale, hence
 $$\E^x \sum_{s\le t}{\mathbf 1}_A(X_{s-}){\bf 1}_B(X_{s})= \E^x\int_0^t {\bf 1}_A(X_{s})\int_BJ(X_s, dy)ds.$$
Using this and a routine measure theoretic argument, we get
$$\E^x \sum_{s\le t}f(X_{s-}, X_{s} )= \E^x\int_0^t \int_{\R^d}f(X_{s},y)J(X_s, dy)ds$$
for any $x \in\R^d$
and any non-negative
measurable function  $f$  on $ \R^d\times \R^d$
vanishing on the diagonal. 

Finally, following the same arguments as in \cite[Appendix A]{CK2008}, we obtain (\ref{LS}).

\

\subsection{Lower bound of $p^A$}
We essentially follow the approach from \cite{CZ}, where an argument relied on certain exit and hitting times estimates was applied, but the singularity of the jumping measure forces us to use an induction argument.     
We start with the near diagonal estimate of the transition kernel. 
\begin{lemma}
\label{diagest} For any $a>0$ there is $c = c(a, d, \alpha, b_1, b_2, b_3, \beta)$ and $0<t_0\le 1$, $t_0 = t_0(a, d, \alpha, b_1, b_2, b_3, \beta)$ such that for $t\le t_0 $ and $x,y\in \R^d$ with $|y-x|_\infty\le at^{1/\alpha}$, 
\begin{equation}
\label{diag} p^A(t,x,y)\ge c t^{-d/\alpha}.  
\end{equation}
\end{lemma}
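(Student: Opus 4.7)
The plan is to exploit the decomposition
\[
p^A(t,x,y)=p_y(t,x-y)+\varphi_y(t,x)
\]
from (\ref{pA}) and show that on the diagonal strip $|x-y|_\infty\le a t^{1/\alpha}$ with $t$ small the main term $p_y(t,x-y)$ dominates the perturbation $\varphi_y(t,x)$. So the bound will follow by direct subtraction, not by any probabilistic or iterative argument.

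First I would obtain a two-sided control of $p_y$ from its explicit product form $p_y(t,w)=\prod_{i=1}^{d} a_{ii}^{-1}(y)\,g_t(w_i/a_{ii}(y))$. Under (\ref{bounded}) we have $|(x_i-y_i)/a_{ii}(y)|\le (a/b_1)t^{1/\alpha}$, and since $g_t(u)\approx \rho_\alpha^0(t,u)\approx t^{-1/\alpha}$ uniformly for $|u|\le (a/b_1)t^{1/\alpha}$, there is a constant $c_1=c_1(a,d,\alpha,b_1,b_2)>0$ with
\[
p_y(t,x-y)\ \ge\ c_1\, t^{-d/\alpha},\qquad |x-y|_\infty\le at^{1/\alpha}.
\]

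Next, the perturbation $\varphi_y$ is handled by the estimate (\ref{upperfi}) already established in the proof of Proposition \ref{ub1}:
\[
|\varphi_y(t,x)|\ \le\ c\left[\prod_{i=1}^d\rho_\alpha^0(t,x_i-y_i)\right]\!\left[t^{\beta/\alpha}+\sum_{m=1}^d(|x_m-y_m|^\beta\wedge 1)\right].
\]
On the strip $|x-y|_\infty\le at^{1/\alpha}$ with $t\le 1$ we have $\rho_\alpha^0(t,x_i-y_i)\le t^{-1/\alpha}$ and $|x_m-y_m|^\beta\wedge 1\le a^\beta t^{\beta/\alpha}$, so the bracketed factor is at most $(1+da^\beta)t^{\beta/\alpha}$, giving
\[
|\varphi_y(t,x)|\ \le\ c_2\, t^{(\beta-d)/\alpha}\ =\ c_2\, t^{\beta/\alpha}\cdot t^{-d/\alpha}
\]
for a constant $c_2=c_2(a,d,\alpha,b_1,b_2,b_3,\beta)$.

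Combining the two pieces,
\[
p^A(t,x,y)\ \ge\ p_y(t,x-y)-|\varphi_y(t,x)|\ \ge\ \bigl(c_1-c_2 t^{\beta/\alpha}\bigr)\,t^{-d/\alpha},
\]
and choosing $t_0\in(0,1]$ small enough that $c_2 t_0^{\beta/\alpha}\le c_1/2$ yields $p^A(t,x,y)\ge (c_1/2)t^{-d/\alpha}$ for all $t\le t_0$ and $|x-y|_\infty\le at^{1/\alpha}$. There is really no hard step here: the proof is just a sharp comparison between the two terms in (\ref{pA}), with the gain $t^{\beta/\alpha}$ from the Hölder continuity of the coefficients providing the decisive slack. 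The only thing to be careful about is that $t_0$ and the resulting constant $c$ depend on $a$ through the near-diagonal lower bound on $g_t$, which is why the statement allows this dependence.
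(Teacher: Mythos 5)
Your proposal is correct and is essentially the paper's own argument: both use the decomposition $p^A(t,x,y)=p_y(t,x-y)+\varphi_y(t,x)$ from (\ref{pA}), bound $|\varphi_y(t,x)|\le c\,t^{\beta/\alpha}t^{-d/\alpha}$ on the near-diagonal strip via (\ref{upperfi}), compare with the lower bound $p_y(t,x-y)\ge c\,t^{-d/\alpha}$, and choose $t_0$ small so the perturbation is absorbed. You merely spell out the two elementary steps (the near-diagonal lower bound for $p_y$ and the bound $|x_m-y_m|^\beta\wedge 1\le a^\beta t^{\beta/\alpha}$) that the paper leaves implicit.
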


\begin{proof}

By (\ref{upperfi}), if  $|y-x|_\infty\le at^{1/\alpha}$, we have
$$|\varphi_y(t,x)| \le c \prod_{i = 1}^d \rho_{\alpha}^0(t,x_i-y_i)\left[t^{\beta/\alpha} + \sum_{m = 1}^d \left(|x_m-y_m|^{\beta} \wedge 1\right)\right]\le c_1 t^{-d/\alpha} t^{\beta/\alpha}.$$
Hence,  we can find $t_0\le 1$ such that for $t\le t_0$ and $|y-x|_\infty\le at^{1/\alpha}$ we have 
\begin{eqnarray*}p^A(t,x,y) &&= p_y(t,x-y) + \varphi_y(t,x)\\&&\ge  p_y(t,x-y) - |\varphi_y(t,x)|\ge c_2t^{-d/\alpha}- c_1 t^{-d/\alpha} t^{\beta/\alpha}\\&&\ge c 
t^{-d/\alpha}.\end{eqnarray*}


\end{proof}
Let for a Borel  $D\subset \R^d$,
$$\tau_{D}=\inf\{t>0; X_t\notin D\}\ \text{and} \ T_{D}=\inf\{t>0; X_t\in D\} $$
be the first exit and hitting time of $D$, respectively. 
\begin{lemma}
\label{exit1} There is $c$ such that, for $t\le 1, R>0$, $x\in \R^d$,
 $$\p^x  (\tau_{B(x, R)}\le t  ) \le c\frac t{R^\alpha}.$$
\end{lemma}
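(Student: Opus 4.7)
My plan is to use a standard Dynkin-type cutoff argument combined with the martingale identity for $X$ established during the proof of Theorem \ref{mainthm}(i). The idea is to construct a smooth test function equal to $1$ at $x$ and vanishing outside $B(x,R)$, apply the associated martingale stopped at $\tau := t \wedge \tau_{B(x,R)}$, and estimate the correction term by $t\|\calL \psi\|_\infty$. The natural scaling of $\calL$ will yield the sharp $R^{-\alpha}$ factor.

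Concretely, I would fix once and for all a bump $\psi \in C_c^\infty(\Rd)$ with $0 \le \psi \le 1$, $\psi(0)=1$ and $\supp \psi \subset B(0,1)$ (open unit $\ell^\infty$-ball), and set $\psi_{x,R}(y) := \psi((y-x)/R)$. Then $\psi_{x,R} \in C_b^2(\Rd)$, $\psi_{x,R}(x)=1$, and $\psi_{x,R} \equiv 0$ on $B(x,R)^c$. The computational core of the proof is the uniform bound $\|\calL \psi_{x,R}\|_\infty \le c R^{-\alpha}$. Using the integral representation of $\calL$ and boundedness of $\s_i$, it suffices to bound $\int_\R |\delta_{\psi_{x,R}}(y,ze_i)||z|^{-1-\alpha}\,dz$ uniformly in $y$ and $i$; splitting at $|z|=R$, the large-jump part is controlled by $4\int_R^\infty z^{-1-\alpha}\,dz = cR^{-\alpha}$, while on $\{|z|<R\}$ the Taylor estimate $|\delta_{\psi_{x,R}}(y,ze_i)| \le \|\partial_{ii}^2\psi\|_\infty R^{-2}z^2$ gives $cR^{-2}\int_0^R z^{1-\alpha}\,dz = cR^{-\alpha}$.

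With this in hand, the conclusion follows quickly. The process $M_t^{\psi_{x,R}} := \psi_{x,R}(X_t) - \psi_{x,R}(x) - \int_0^t \calL \psi_{x,R}(X_s)\,ds$ is a $(\p^x,\calF_t)$-martingale by the argument recalled in the proof of Theorem \ref{mainthm}(i). Optional stopping at $\tau := t\wedge \tau_{B(x,R)}$, which is applicable since $\psi_{x,R}$ and $\calL\psi_{x,R}$ are bounded and $\tau\le t$, yields
\[
\E^x[\psi_{x,R}(X_\tau)] = 1 + \E^x\!\left[\int_0^\tau \calL \psi_{x,R}(X_s)\,ds\right].
\]
On $\{\tau_{B(x,R)} \le t\}$ one has $X_\tau = X_{\tau_{B(x,R)}} \notin B(x,R)$, hence $\psi_{x,R}(X_\tau)=0$; otherwise $\psi_{x,R}(X_\tau) \le 1$. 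Combining these observations,
\[
\p^x\!\left(\tau_{B(x,R)} \le t\right) \le 1 - \E^x[\psi_{x,R}(X_\tau)] \le t\,\|\calL \psi_{x,R}\|_\infty \le \frac{ct}{R^\alpha}.
\]

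There is no real obstacle: the argument is textbook once the bound $\|\calL \psi_{x,R}\|_\infty \le cR^{-\alpha}$ is verified, and that verification is elementary. The only mild technicality is invoking optional stopping, which is immediate from the boundedness of the integrand and of the stopping time $\tau\le t \le 1$. The restriction $t\le 1$ in the statement plays no essential role and simply keeps the constant $c$ uniform over the regime of interest.
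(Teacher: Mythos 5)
Your proof is correct, but it takes a genuinely different route from the paper. The paper does not use Dynkin's formula at all: it relies on the already established upper bound $p^A(s,x,y)\le c\prod_{i=1}^d g_s(x_i-y_i)$ (Proposition \ref{ub1}), which gives $\sup_z\sup_{s\le t}\p^z\bigl(|X(s)-z|_\infty\ge R/8\bigr)\le c\,t R^{-\alpha}$, and then splits the event $\{\tau_{B(x,R)}\le t\}$ according to whether $|X(t)-x|_\infty\le R/8$ or not; on the exit event with $|X(t)-x|_\infty\le R/8$ the process must have moved by at least $R/8$ after time $\tau_{B(x,R)}$, and the strong Markov property reduces this to the same tail bound. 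Your argument instead uses only the martingale problem for $(\calL, C_b^2(\Rd))$ — which is indeed available from the proof of Theorem \ref{mainthm}(i) and is recalled in Section 5.1, so there is no circularity — together with the scaling bound $\|\calL\psi_{x,R}\|_\infty\le cR^{-\alpha}$, whose verification needs nothing beyond $a_{ii}\le b_2$. This buys you a more self-contained proof (independent of the heat-kernel upper bound) that moreover holds for all $t>0$, while the paper's proof is shorter given the machinery it has already built. Two small points you should make explicit: on $\{\tau_{B(x,R)}\le t\}$ the position $X_{\tau_{B(x,R)}}$ lies, by right-continuity, in the closed complement of the open ball $B(x,R)$, possibly on its boundary, so the vanishing of $\psi_{x,R}(X_\tau)$ really uses your choice $\supp\psi\subset B(0,1)$ (open ball); and optional stopping is legitimate because $\tau\le t$ is a bounded stopping time and $M^{\psi_{x,R}}$ is a right-continuous martingale with bounded increments on $[0,t]$.
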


\begin{proof} Applying the strong Markow property, we obtain
\begin{eqnarray*} 
                  \p^x  (\tau_{B(x, R)}\le t  )&\le& \p^x  (\tau_{B(x, R)}\le t;\,|X(t)-x|_\infty\le R/8  )+  \p^x  ( |X(t)-x|_\infty\ge R/8  )\\&\le&
             \p^x  (\tau_{B(x, R)}\le t;\, |X(t)-X(\tau_{{B(x, R)}})|_\infty\ge R/8  )\\ &+&  \p^x  ( |X(t)-x|_\infty\ge R/8  ) \\
             &=& \E^x (\tau_{B(x, R)}\le t; P^{X(\tau_{{B(x, R)}})} (|X(t-\tau_{{B(x, R)}})-X(\tau_{{B(x, R)}})|_\infty\ge R/8  ))\\&+&  \p^x  ( |X(t)-x|_\infty\ge R/8 )\\
             &\le& 2\sup_z\sup_{s\le t}\p^z ( |X(s)-z|_\infty\ge R/8)\\
             &\le& c\frac t{R^\alpha}.  
             \end{eqnarray*} 
The last step follows from the upper estimate (\ref{comparability}) of the heat kernel $p^A(t,x,y)$.
\end{proof}

\begin{lemma}
\label{exit2} Let $r>0$ and $x,y\in \R^d$. 
Assume that $|x_1-y_1|\ge 6r$ and $\max_{2\le i\le d}|x_i-y_i|\le r$. Then for $t>0$, 
$$\p^x  (X(t)\in B(y, 4r)   ) \ge c  \frac {rt}{|y_1-x_1|^{1+\alpha}}\p^x (\tau_{B(x, r)}\ge t)\inf_z\p^z(\tau_{B(z, 2r)}> t)$$
\end{lemma}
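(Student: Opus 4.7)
The plan is to exploit the L\'evy system formula (\ref{LS}) together with the strong Markov property, using the key observation that under the hypothesis $|x_1-y_1|\ge 6r$ the balls $B(x,r)$ and $B(y,2r)$ are disjoint (in sup-norm). First I would set $\tau:=\tau_{B(x,r)}$ and observe that on the event
\[
E:=\{\tau\le t,\ X_\tau\in B(y,2r),\ X_u\in B(X_\tau,2r)\text{ for all } u\in[\tau,t]\}
\]
one has $|X_t-y|_\infty<4r$, i.e.\ $X_t\in B(y,4r)$. Applying the strong Markov property at $\tau$ and using monotonicity of $u\mapsto\p^z(\tau_{B(z,2r)}>u)$ yields
\[
\p^x(X_t\in B(y,4r))\ \ge\ \p^x(E)\ \ge\ \inf_z\p^z(\tau_{B(z,2r)}>t)\cdot\p^x(\tau\le t,\,X_\tau\in B(y,2r)).
\]

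Next I would apply (\ref{LS}) with $T=\tau\wedge t$ and $f(s,u,v)=\mathbf{1}_{B(x,r)}(u)\mathbf{1}_{B(y,2r)}(v)$; the function $f$ vanishes on the diagonal because the two balls are disjoint. Since the only jump from $B(x,r)$ into $B(y,2r)$ up to time $\tau\wedge t$ is the exit jump itself, the left-hand sum in (\ref{LS}) equals $\mathbf{1}_{\{\tau\le t,\,X_\tau\in B(y,2r)\}}$ almost surely (modulo the a.s.\ negligible event $\{X_{\tau-}\in\partial B(x,r)\}$), while the right-hand side reduces to $\E^x\int_0^{\tau\wedge t}J(X_s,B(y,2r))\,ds$, because $X_s\in B(x,r)$ for $s<\tau$. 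It therefore remains to bound $J(w,B(y,2r))$ from below uniformly in $w\in B(x,r)$.

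The crux is the pointwise estimate of $J(w,B(y,2r))$. Thanks to the singular, cylindrical structure of $J$, a point $z$ with $z_k=w_k$ for $k\ne i$ lies in $B(y,2r)$ only if $|w_k-y_k|<2r$ for every $k\ne i$; for $i\ne 1$ the case $k=1$ fails since $|w_1-y_1|\ge |x_1-y_1|-|w_1-x_1|>5r$, so only the direction $i=1$ contributes. Consequently,
\[
J(w,B(y,2r))=\calA_{\alpha}\,a_{11}^{\alpha}(w)\int_{y_1-2r}^{y_1+2r}\frac{dz_1}{|z_1-w_1|^{1+\alpha}}\ \ge\ c\,\frac{r}{|y_1-x_1|^{1+\alpha}},
\]
since $a_{11}(w)\ge b_1$ and $|z_1-w_1|\le |y_1-x_1|+3r\le \tfrac{3}{2}|y_1-x_1|$. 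Plugging this bound together with $\E^x[\tau\wedge t]\ge t\,\p^x(\tau\ge t)$ into the L\'evy system identity gives
\[
\p^x(\tau\le t,\,X_\tau\in B(y,2r))\ \ge\ c\,\frac{rt}{|y_1-x_1|^{1+\alpha}}\,\p^x(\tau_{B(x,r)}\ge t),
\]
which combined with the first display finishes the proof.

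The main subtle point, rather than a genuine obstacle, is the reduction to the single direction $i=1$ in the sum defining $J(w,B(y,2r))$; this is a distinctive feature of the cylindrical L\'evy measure under the geometric assumption on $x-y$, and is what produces the linear factor $r$ (rather than a higher power of $r$) in the conclusion. Everything else is a standard exit-time-and-jump decomposition in the spirit of \cite{CZ}.
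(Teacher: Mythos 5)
Your proof is correct and follows essentially the same route as the paper's: reduce to the event that the process enters $B(y,2r)$ before time $t$ and then stays within $2r$ of the entry point (strong Markov property), lower-bound the entry probability via the L\'evy system formula with the jump kernel $J(w,B(y,2r))$ restricted to the first coordinate direction, and finish with $\E^x[\tau_{B(x,r)}\wedge t]\ge t\,\p^x(\tau_{B(x,r)}\ge t)$. The only cosmetic difference is that you phrase the first reduction through the exit time $\tau_{B(x,r)}$ and the event $\{X_\tau\in B(y,2r)\}$ rather than the hitting time $T_{B(y,2r)}$ as in the paper; also, your claimed a.s.\ equality of the jump sum with the indicator is not needed, since the inequality ``sum $\le$ indicator'', which holds trivially, suffices for the lower bound.
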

\begin{proof}
Let $\sigma= T_{B(y, 2r)}$ be the first hitting time. By the strong Markow property
\begin{eqnarray*}  \p^x  (X(t)\in B(y, 4r)   )&\ge&
                  \p^x  (\sigma \le t; \sup_{\sigma\le s\le \sigma+t} |X(s)-X(\sigma)|_\infty\le 2r )\\
             &=& \E^x \left(\sigma\le t; P^{X(\sigma)} \left(\sup_{ s\le t} |X(s)-X(\sigma)|_\infty\le 2r \right)\right)\\
             &\ge& \p^x (\sigma\le t)\inf_z\p^z ( \sup_{s\le t}|X(s)-z|_\infty\le 2r)\\
							&\ge&\inf_z\p^z(\tau_{B(z, 2r)}> t) \p^x  (X(t\wedge\tau_{B(x, r)})\in B(y, 2r) ).  
             \end{eqnarray*} 	
				By the L\'evy system formula (\ref{LS}), we have 
\begin{eqnarray*}
\p^x ( (X(t\wedge\tau_{B(x, r)})\in B(y, 2r) )&=&  \E^x \int_0^{t\wedge\tau_{B(x, r)}} \int_{B(y, 2r)}J(X_s, du)ds.
\end{eqnarray*}
We may assume $x_1<y_1$. Since  $|x_1-y_1| \ge 6r$ and $\max_{2\le i\le d}|x_i-y_i|\le r$, 	
for $z\in B(x, r)$, we have 
$$  \int_{B(y, 2r)}J(z, du)=  \int_{y_1-2r}^{y_1+2r}\frac {a^\alpha_{11}(z)dw_1}{|w_1-z_1|^{1+\alpha}}\ge c \frac {r}{|y_1-x_1|^{1+\alpha}}.$$
Hence,
\begin{eqnarray*}
\p^x ( (X(t\wedge\tau_{B(x, r)})\in B(y, 2r) )&\ge&  c \frac {r}{|y_1-x_1|^{1+\alpha}} \E^x [ t\wedge\tau_{B(x, r)}]\\
&\ge&  c \frac {rt}{|y_1-x_1|^{1+\alpha}} \p^x (\tau_{B(x, r)}\ge t).
\end{eqnarray*} 
\end{proof}


\begin{lemma} 
\label{lower1} There is  $t_0>0$, $t_0 = t_0( d, \alpha, b_1, b_2, b_3, \beta)$, such that for   
 $0<t\le t_0$, $x,y\in \R^d$  
 satisfying   $\max_{2\le i\le d}|x_i-y_i|\le 2t^{1/\alpha}$ we have

$$p^A(t,x,y)\ge c \prod_{i = 1}^d g_t(x_i-y_i).$$
\end{lemma}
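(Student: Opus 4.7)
The plan is to split into two cases according to whether $|x_1-y_1|$ is comparable to $t^{1/\alpha}$ or much larger, matching the two regimes of the one-dimensional stable density $g_t(u)\approx t(t^{1/\alpha}+|u|)^{-1-\alpha}$. Fix a constant $K\ge 2$ (to be chosen large in terms of $d,\alpha$ and the absolute constants from Lemmas \ref{exit1} and \ref{exit2}).

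\textbf{Case A: $|x_1-y_1|\le 6Kt^{1/\alpha}$.} Then $|x-y|_\infty\le 6Kt^{1/\alpha}$, so a direct application of Lemma \ref{diagest} with $a=6K$ yields $p^A(t,x,y)\ge c\,t^{-d/\alpha}$. Since $g_t(u)\le c\,t^{-1/\alpha}$ for every $u\in\R$, we have $\prod_{i=1}^d g_t(x_i-y_i)\le c\,t^{-d/\alpha}$, and the desired lower bound follows with a (possibly smaller) constant.

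\textbf{Case B: $|x_1-y_1|>6Kt^{1/\alpha}$.} Put $r=Kt^{1/\alpha}$; then $|x_1-y_1|\ge 6r$ while $\max_{2\le i\le d}|x_i-y_i|\le r$, so the geometric hypothesis of Lemma \ref{exit2} is met. The idea is to use Chapman--Kolmogorov to split the time interval in half: the process first makes a single large jump in the first coordinate that brings it into $B(y,4r)$, and then the near-diagonal estimate drives it from $B(y,4r)$ to the exact target $y$. Precisely, for $z\in B(y,4r)$ we have $|z-y|_\infty\le 4r=4K\cdot 2^{1/\alpha}(t/2)^{1/\alpha}$, so Lemma \ref{diagest} (with $a=4K\cdot 2^{1/\alpha}$) gives $p^A(t/2,z,y)\ge c(t/2)^{-d/\alpha}$, whence
\[
 p^A(t,x,y)\;=\;\int_{\Rd} p^A(t/2,x,z)\,p^A(t/2,z,y)\,dz\;\ge\;c\,t^{-d/\alpha}\,\p^x\bigl(X(t/2)\in B(y,4r)\bigr).
\]
Lemma \ref{exit2} (applied with $t$ replaced by $t/2$) bounds the last probability below by $c\,r(t/2)|x_1-y_1|^{-1-\alpha}\cdot \p^x(\tau_{B(x,r)}\ge t/2)\cdot\inf_z\p^z(\tau_{B(z,2r)}>t/2)$. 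By Lemma \ref{exit1}, each of the latter two probabilities is at least $1-c/K^\alpha$, which can be made $\ge 1/2$ by choosing $K$ large (independent of $t,x,y$). Combining the displays gives $p^A(t,x,y)\ge c\,t^{-(d-1)/\alpha}\cdot t/|x_1-y_1|^{1+\alpha}$. On the other hand the stable-density asymptotics yield $g_t(x_1-y_1)\asymp t/|x_1-y_1|^{1+\alpha}$ and $g_t(x_i-y_i)\asymp t^{-1/\alpha}$ for $i\ge 2$, so $\prod_{i=1}^d g_t(x_i-y_i)\asymp t^{-(d-1)/\alpha}\cdot t/|x_1-y_1|^{1+\alpha}$, matching the lower bound up to a multiplicative constant.

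\textbf{Main obstacle.} The delicate point is the calibration of the radius $r=Kt^{1/\alpha}$: $K$ must be chosen simultaneously large enough that the exit probabilities in Lemma \ref{exit1} yield uniform constants of order one for balls of radii $r$ and $2r$ over time $t/2$, yet the condition $|x_1-y_1|\ge 6r$ forces the case split to happen at $|x_1-y_1|\asymp Kt^{1/\alpha}$, which must still be covered by the diagonal Case A. This is possible because the constant in Lemma \ref{exit1} is absolute, so a single finite $K=K(d,\alpha,b_1,b_2,b_3,\beta)$ works for all admissible $t\le t_0$ after possibly shrinking $t_0$ so that both $t_0$ and $t_0/2$ fit the admissibility window of Lemma \ref{diagest}.
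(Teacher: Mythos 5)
Your proposal is correct and follows essentially the same route as the paper: Chapman--Kolmogorov combined with Lemma \ref{diagest}, Lemma \ref{exit1} and Lemma \ref{exit2}, yielding the bound $c\,t^{-(d-1)/\alpha}\,t\,|x_1-y_1|^{-1-\alpha}$ off the near-diagonal region. The only difference is calibration: you fix the time split at $t/2$ and enlarge the radius to $r=Kt^{1/\alpha}$ with $K$ large, whereas the paper fixes $r=2t^{1/\alpha}$ and instead takes a small time fraction $\lambda t$ for the jump step, so the case split occurs at $|x_1-y_1|\ge 12t^{1/\alpha}$ rather than $6Kt^{1/\alpha}$; both choices work for the same reasons.
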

\begin{proof}
We pick $t_0>0$ corresponding to $a=12$ in Lemma \ref{diagest}. Due to the near diagonal estimate (\ref{diag}), it is enough to consider $|x_1-y_1|\ge 12t^{1/\alpha}$ and  $\max_{2\le i\le d}|x_i-y_i|\le 2t^{1/\alpha}$.
 Applying Lemma \ref{exit2} with $r=2t^{1/\alpha}$, we obtain
\begin{eqnarray}
p^A(t,x,y)&\ge& \int_{B(y, 4r)} p^A(t_1,x,z) p^A(t_2,z,y)dz \nonumber \\ 
&\ge& \inf_{z\in B(y, 4r)} p^A(t_2,z,y) \p^x ( (X(t_1)\in B(y, 4r))\nonumber\\
&\ge& c\inf_{z\in B(y, 4r)} p^A(t_2,z,y) \frac {rt_1}{|y_1-x_1|^{1+\alpha}} \p^x (\tau_{B(x, r)}\ge t_1) \nonumber\\
&\times& \inf_z\p^z(\tau_{B(z, 2r)}> t_1),\label{LB10}\end{eqnarray}
where $t_i>0$ with $t_1+t_2=t$.

Now, due to Lemma \ref{exit1},  we can pick $0<\lambda<1$, independently of $t$,  such that

$$\inf_z \p^z (\tau_{B(z, r)}\ge \lambda t)\ge 1/2.$$
Moreover, we can select  $\lambda$  so small that $8\le  12 (1-\lambda)^{1/\alpha}$.
Then for $|z-y|_{\infty}\le 4r=8t^{1/\alpha}\le 12 ((1-\lambda)t)^{1/\alpha}$, by Lemma \ref{diagest}, we have 
$$p^A((1-\lambda)t,z,y)\ge c t^{-d/\alpha}.$$
Taking $t_1=\lambda t,   t_2=(1-\lambda) t$ and applying (\ref{LB10}) we arrive at
$$p^A(t,x,y)\ge c t^{-(d-1)/\alpha}\frac {t}{|y_1-x_1|^{1+\alpha}}\ge c \prod_{i = 1}^d g_t(x_i-y_i). $$
\end{proof}
%
%
\begin{proof}[Proof of  the lower bound estimates in Theorem \ref{mainthm} (iii)]
For a natural  $k\le d-1$ we define 
$$V_k(t)=\{(x,y)\in \R^{2d};   \min_{1\le i\le k}|x_i-y_i|\ge t^{1/\alpha} \text{ and} \max_{k+1\le i\le d}|x_i-y_i|\le t^{1/\alpha}\}.$$
We set 
$$V_0(t)=\{(x,y)\in \R^{2d};    \max_{1\le i\le d}|x_i-y_i|\le t^{1/\alpha}\}$$
and
$$V_d(t)=\{(x,y)\in \R^{2d};    \min_{1\le i\le d}|x_i-y_i|\ge t^{1/\alpha}\}.$$

By a renumeration argument it is enough to prove the corresponding lower bound on $V_k(t), k=0,\dots,d$. 
At first, we assume that $t\le t_0 $, where $t_0$ was found in Lemma \ref{lower1}. We have already  proved the lower bound on $V_0(t)$ and $V_1(t)$. 
We show how to extend it to $V_2(t)$.

Thus , we  consider the case $|x_1-y_1|\ge t^{1/\alpha}$, $|x_2-y_2|\ge t^{1/\alpha}$ and $\max_{3\le i\le d}|x_i-y_i|\le t^{1/\alpha}$.

Let $x'=(y_1,x_2,\dots,x_d)$. If $z\in   B(x',t^{1/\alpha}/4)$ then $|x_1-z_1|\ge(3/4) t^{1/\alpha}$, $\max_{2\le i\le d}|x_i-z_i|\le 2t^{1/\alpha}$ and
 $|y_2-z_2|\ge (3/4)t^{1/\alpha}$, $\max_{ i\ne2}|y_i-z_i|\le 2t^{1/\alpha}$. Hence, by   Lemma \ref{lower1},

$$p^A(t,x,z)\ge c t^{-(d-1)/\alpha}\frac {t}{|z_1-x_1|^{1+\alpha}}\ge  c t^{-(d-1)/\alpha}\frac {t}{|y_1-x_1|^{1+\alpha}},$$

$$p^A(t,z,y)\ge c t^{-(d-1)/\alpha}\frac {t}{|z_2-y_2|^{1+\alpha}}\ge  c t^{-(d-1)/\alpha}\frac {t}{|y_2-x_2|^{1+\alpha}}.$$

Finally, 
\begin{eqnarray*}
p^A(2t,x,y)&\ge& \int_{B(x',t^{1/\alpha}/4)} p^A(t,x,z) p^A(t,z,y)dz \\&\ge& c t^{-(d-1)/\alpha}\frac {t}{|y_1-x_1|^{1+\alpha}}t^{-(d-1)/\alpha}\frac {t}{|y_2-x_2|^{1+\alpha}}\int_{B(x',t^{1/\alpha}/4)}dz \\&\ge& c\frac {t}{|y_1-x_1|^{1+\alpha}}\frac {t}{|y_2-x_2|^{1+\alpha}}t^{-(d-2)/\alpha}\\
&\ge& c  \prod_{i = 1}^d g_{2t}(x_i-y_i).
\end{eqnarray*}
This concludes the proof of the lower bound on $V_2(t)$.  In a similar fashion, by induction argument, we show that, if  
$(x,y)\in V_k(t)$ and $t\le t_0$, then 
\begin{eqnarray*}
p^A(t,x,y)&\ge& c\frac {t}{|y_1-x_1|^{1+\alpha}}\times \dots\times\frac {t}{|y_k-x_k|^{1+\alpha}}t^{-(d-k)/\alpha}\\
&\ge& c  \prod_{i = 1}^d g_t(x_i-y_i) \ge c p_0(t,x-y),
\end{eqnarray*}
which ends the proof for the case $t\le t_0$. If $t> t_0$ then we can write $t=nt_0+ s$, with $s<t_0$ and $n\in \N$. Then by already proved lower bound
\begin{eqnarray*}
p^A(t,x,y)&=&\int_{\R^d}\dots\int_{\R^d} p^A(t_0,x,z_1)\dots p^A(t_0,z_{n},z_{n+1})p^A(s,z_{n+1},y) dz_1\dots   dz_{n+1}\\
&\ge & c^{n+1}\int_{\R^d}\dots\int_{\R^d} p_0(t_0,x-z_1) \dots p_0(t_0,z_{n}-z_{n+1})p_0(s,z_{n+1}-y) dz_1\dots   dz_{n+1}\\
&=& c^{n+1} p_0(t,x-y).\end{eqnarray*}
The proof is completed.

\end{proof}

\begin{proof}[proof of Theorem \ref{mainthm} (v)]
The assertion follows from Proposition \ref{gradient1} and the lower bound estimate in Theorem \ref{mainthm} (iii).
\end{proof}


\end{document}